\documentclass[12pt, a4paper]{article}

\usepackage{a4,amsmath,amssymb, amsthm, latexsym, color, graphicx,url}
\usepackage{tikz}
\usetikzlibrary{decorations.pathreplacing,decorations.markings}
\tikzset{
  on each segment/.style={
    decorate,
    decoration={
      show path construction,
      moveto code={},
      lineto code={
        \path [#1]
        (\tikzinputsegmentfirst) -- (\tikzinputsegmentlast);
      },
      curveto code={
        \path [#1] (\tikzinputsegmentfirst)
        .. controls
        (\tikzinputsegmentsupporta) and (\tikzinputsegmentsupportb)
        ..
        (\tikzinputsegmentlast);
      },
      closepath code={
        \path [#1]
        (\tikzinputsegmentfirst) -- (\tikzinputsegmentlast);
      },
    },
  },
  mid arrow/.style={postaction={decorate,decoration={
        markings,
        mark=at position .5 with {\arrow[#1]{stealth}}
      }}},
  start arrow/.style={postaction={decorate,decoration={
        markings,
        mark=at position .25 with {\arrow[#1]{stealth}}
      }}},
}
\usetikzlibrary{arrows.meta}
\usetikzlibrary{decorations.markings}
\usetikzlibrary{calc}

\usepackage{authblk}
\usepackage{fullpage}
\usepackage{enumitem}
\usepackage{xcolor}
\newtheorem{theorem}{Theorem}[section]
\newtheorem{proposition}[theorem]{Proposition}
\newtheorem{lemma}[theorem]{Lemma}
\newtheorem{corollary}[theorem]{Corollary}

\newtheorem{conjecture}{Conjecture}
\theoremstyle{definition}
\newtheorem{example}[theorem]{Example}
\newtheorem{definition}[theorem]{Definition}
\newtheorem{remark}[theorem]{Remark}
\newtheorem{construction}[theorem]{Construction}

\newcommand{\E}{\overrightarrow{E}}

\title{Graph labellings and external difference families}
\author[1]{Gavin Angus}
\author[2] {Sophie Huczynska}
\author[3] {Struan McCartney}
\affil[1]{School of Computing Science, University of Glasgow, Glasgow G12 8RZ, Scotland, UK; email: gavin.angus04@gmail.com}
\affil[2]{School of Mathematics and Statistics, University of St Andrews, St Andrews, KY16 9SS, Scotland, UK; email: sh70@st-andrews.ac.uk}
\affil[3]{School of Mathematics and Statistics, University of St Andrews, St Andrews, KY16 9SS, Scotland, UK: email: sm444@st-andrews.ac.uk}
\date{MSC codes: 05B10, 05C78, 94A60}
\begin{document}
\maketitle

\begin{abstract}
Digraph-defined external difference families were recently introduced as a natural generalization of several well-studied combinatorial objects motivated by cryptography (e.g. external difference families (EDFs) and circular external difference families (CEDFs)).  In this paper, we develop a systematic framework for using various types of vertex-labellings for graphs and digraphs to create digraph-defined external difference families.   The approach is to combine suitable vertex-labellings (generalizations of $\alpha$-valuations, namely near $\alpha$-valuations and oriented near $\alpha$-valuations) with a graph blow-up technique. Many new families are produced, including the first explicit construction for an infinite family of $2$-CEDFs, achieving all parameter sets for $(n,m,l;1)$-$2$-CEDFs with $m \equiv 0 \mod 4$ sets. Further, new results arise for graph labellings themselves (e.g. cyclotomy-based near $\alpha$-valuations for a family of trees without $\alpha$-valuations, and an $\alpha$-valuation for sun graphs).
\end{abstract}

\section{Introduction}
External difference families (EDFs) are much-studied combinatorial objects, initially motivated by applications in information security \cite{Cramer, Oga}.  In its original form, an EDF consists of a finite collection of equal-sized disjoint subsets $A_1,\ldots, A_m$ of a finite group $G$, such that the multiset union $\cup_{i \neq j} \{x-y: x \in A_i, y \in A_j\}$ comprises a constant number of occurrences of each non-identity element of $G$. Several variants of these have been investigated, in particular strong external difference families (SEDFs) which correspond to a stronger security model than the original setup, and circular difference families (CEDFs) which were introduced in relation to non-malleable codes \cite{VeiSti}. EDFs and their variants are essentially collections of sets with conditions on the multiset of differences occurring between the elements of certain pairs of sets in the collection.  Recently, the concept of digraph-defined EDFs was introduced, which provides a framework for all of these (and more) by using directed graphs to specify how the multiset of pairwise differences between sets is defined in each case \cite{HucJefMcC}.  For example, the original EDF is defined using the complete undirected graph (with each edge viewed as being a pair of directed edges), while a CEDF is defined by a cycle oriented in a clockwise direction.

Graceful labellings (also called $\beta$-valuations), and in particular the special case of $\alpha$-valuations, are vertex labellings of graphs which have been used in various contexts to produce difference structures.  Recently they were used by Stinson and couathors to create 1-CEDFs and SEDFs \cite{PatStiCEDF, PatSti1, VeiSti}.  A variation of graceful labelling was used by Buratti in \cite{Bur} to create strong difference families (note that these differ from SEDFs).

In this paper, we identify those properties of vertex-labellings of graphs and digraphs of use for constructing difference structures, with a focus on digraph-defined external difference families. Previous related work \cite{PatStiCEDF} used certain vertex-labellings of even-length cycles; inspired by this, we build a systematic framework for the use of graph-labellings in a general context, illuminating which aspects of the valuation are used.  We demonstrate that more general types of labelling possess the necessary properties, e.g. near $\alpha$-valuations \cite{Cahit,Grannell,El-Zanati} and oriented near $\alpha$-valuations \cite{BloHsu}.  An advantage of these is that they can exist for types of graph which are known not to possess an $\alpha$-valuation. We develop new results about them, and demonstrate how infinite families of digraph-defined EDFs can be constructed using them.  In the final section we give constructions of digraph families possessing a specific ``natural orientation" (unidirectional for paths and cycles, and left-to-right/bottom-to-top for grids).  We provide the first explicit construction for infinite families of $2$-CEDFs, which covers all possible parameter sets for $(n,m,l;1)$-$2$-CEDFs with $m \equiv 0 \mod 4$ sets. Since a $2$-CEDF with an odd number of sets is a re-ordered $1$-CEDF, this leaves only the case when $m \equiv 2 \mod 4$. 

\section{Background}
Throughout, for a graph $G=(V,E)$, we denote by $V(G)$ the vertex-set of $G$ and by $E(G)$ the edge-set of $G$. Our graphs are finite and simple with no loops or isolated vertices. When we consider directed graphs, we consider those which are oriented, i.e. for distinct $u,v$ in the vertex set, at most one of $(u,v)$ or $(v,u)$ is in the directed edge set.

\subsection{$\beta$-valuations and digraph-defined EDFs}

By a vertex-labelling of a graph, we mean any one-to-one map of its vertices into the set of integers $\{0,1,\ldots,n\}$ ($n \in \mathbb{N}$).
The following definition of a particular type of vertex-labelling is given by Rosa in \cite{Rosa}.

\begin{definition}
A $\beta$‐valuation of a graph $G$ with $n$ edges is a one‐to‐one map of the vertices into the set of integers $\{0,1,\ldots,n\}$, such that, if we label each edge of $G$ by the absolute value of the difference of the labels of the corresponding vertices, then the resulting edge labels are precisely the elements of the set $\{1,2,\ldots,n\}$. (This is also known as a graceful labelling.)

Formally, for a graph $G$, we define a $\beta$-valuation $b$ of $G$ as an injective function $b : V(G) \rightarrow \{0,\ldots,n\}$
such that the following multiset equation holds:
\[\bigcup_{\{u,v\}\in E(G)} \{ \mid b(v)-b(u) \mid \}= \{1, \ldots n\}\]
\end{definition}
The Graceful Tree Conjecture was first proposed by Rosa in \cite{Rosa}. It conjectures that all trees have a $\beta$-valuation, and is still open in general. Various classes of trees are known to have $\beta$-valuations \cite{Rosa}, including caterpillars (a caterpillar is a tree such that, when all of its pendant vertices are removed, the resulting graph is a path) and rooted symmetric trees \cite{Rob}.  Graphs which are not trees can also have $\beta$-valuations: e.g. the cycle $C_m$ has a graceful labelling precisely if $m$ is $0$ or $3$ modulo $4$ \cite{Rosa}.

As we label the vertices with distinct values from the set of integers $\{0,1,\ldots,n\}$, for a graph to possess a $\beta$-valuation it is necessary that $ n +1 \geq \mid V(G) \mid.$ This means that a $\beta$-valuation is possible only for $n$-edge graphs which have at most $n+1$ vertices. A tree is an extremal example of this (with $|E(G)|=n$ and $V(G)=n+1$). An example of a graph that does not have a $\beta$-valuation is $P_1 \cup P_1$, the union of two paths of length 1.

The following definition is introduced in \cite{HucJefMcC}.

\begin{definition}\label{def:H-defined EDF}
Let $G$ be a group of order $n$, and let $\mathcal{A}=(A_0,\ldots,A_{m-1})$ be an ordered collection of disjoint subsets of $G$, each of size $l$.  Let $H$ be a labelled digraph on $m$ vertices $\{0,1,\ldots,m-1\}$ and let $\E(H)$ be the set of directed edges of $H$.  Then $\mathcal{A}$ is said to be an $(n,m,l,\lambda; H)$-EDF if the following multiset equation holds:
$$ \bigcup_{(i,j) \in \E(H)} \Delta(A_j,A_i) = \lambda (G \setminus \{0\}).$$
We will call such a structure a \emph{digraph-defined} EDF.  If we wish to emphasise $H$, we will call it an $H$-defined EDF.
\end{definition}

The following type of digraph-defined EDF, introduced by Veitch and Stinson in \cite{VeiSti}, has attracted particular attention:
\begin{definition}
Let $G$ be a group of order $n$. Suppose $m>1$ and $1 \leq c \leq m-1$.   A family of disjoint $l$-sets $\{A_1, \ldots, A_m\}$ in $G$ is an $(n, m, l,\lambda)$-$c$-CEDF if the following multiset equation holds:
$ \bigcup_{i=0}^{m-1} \Delta(A_{i+c \mod m}, A_i)=\lambda (G \setminus \{0\})$.
\end{definition}

If $c=1$ then $c$ is usually omitted from the notation.  The following was proved in \cite{HucJefMcC}:

\begin{lemma}\label{lemma:CEDF}
Let $\mathcal{A}=\{A_1, \ldots, A_m\}$ be a disjoint collection of $l$-subsets of a group $G$. Suppose $\mathcal{A}$ is a $c$-CEDF.  Then, if $\gcd(c,m)=d$, the multiset in the definition may be written as a disjoint union of $d$ multiset unions, each involving $m/d$ sets, as follows (where all indices are taken modulo $m$):
$$ \bigcup_{i=0}^{m-1} \Delta(A_{i+c}, A_i)=\bigcup_{j=0}^{d-1} \left(\bigcup_{i=0}^{(m/d-1)} \Delta(A_{(i+1)c+j},A_{ic+j}) \right)=\lambda(G \setminus \{0\})$$
\end{lemma}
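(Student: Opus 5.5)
This is a reindexing of the multiset union by the cosets of the subgroup $\langle c\rangle$ in $\mathbb{Z}_m$. The plan is to show that the map
$$\phi : \{0,\ldots,d-1\}\times\{0,\ldots,m/d-1\} \to \mathbb{Z}_m,\qquad \phi(j,i)=ic+j \bmod m,$$
is a bijection. Once that is known, each term $\Delta(A_{k+c},A_k)$ on the left is indexed by some $k\in\mathbb{Z}_m$. Writing $k=\phi(j,i)=ic+j$ gives $k+c=(i+1)c+j$, so that term is exactly $\Delta(A_{(i+1)c+j},A_{ic+j})$ on the right. Bijectivity then means every term on the left appears exactly once on the right and vice versa, so the two multiset unions agree. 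The final equality with $\lambda(G\setminus\{0\})$ is just the $c$-CEDF hypothesis.

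For the bijection, first note that the subgroup generated by $c$ in $\mathbb{Z}_m$ is $\langle d\rangle$, which has order $m/d$. Hence $0,c,2c,\ldots,(m/d-1)c$ are distinct modulo $m$ and together form exactly the multiples of $d$. The cosets of $\langle d\rangle$ are $j+\langle d\rangle$ for $j=0,\ldots,d-1$, and these are pairwise distinct and partition $\mathbb{Z}_m$. Therefore each $k\in\mathbb{Z}_m$ can be written uniquely as $k=j+ic$ with $0\le j<d$ and $0\le i<m/d$. Since the domain and codomain both have $m$ elements, injectivity (or surjectivity) alone is enough.

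One more check is needed so that the inner union really runs over a full cycle. For $i=m/d-1$ we have $(i+1)c+j=(m/d)c+j$. Since $(m/d)c=m\cdot(c/d)\equiv 0 \pmod m$, this index equals $j$, so the last term of the $j$-th inner union is $\Delta(A_j,A_{(m/d-1)c+j})$. Each inner union is therefore a closed cyclic chain on the coset $j+\langle d\rangle$.

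The only real content is the fact $\langle c\rangle=\langle d\rangle$ in $\mathbb{Z}_m$, which follows from B\'ezout's identity. The rest is bookkeeping, so there is no serious obstacle.
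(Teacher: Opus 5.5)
Your proof is correct. Since $\langle c\rangle=\langle d\rangle$ in $\mathbb{Z}_m$, the map $(j,i)\mapsto ic+j$ is a bijection onto $\mathbb{Z}_m$, and reindexing the multiset union along it gives exactly the stated decomposition into $d$ closed cycles of length $m/d$. The paper does not prove this lemma itself (it is quoted from \cite{HucJefMcC}), so there is no in-paper proof to compare against; your coset argument is the natural one.
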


We can use vertex-labellings on graphs to obtain digraphs and corresponding EDFs.

\begin{definition}
Let $G$ be a graph equipped with a $\beta$-valuation $b$.  Denote by $G^b$ the digraph obtained from $G$ by orienting each edge towards the larger of its two vertex labels under the $\beta$-valuation $b$.  We will call this its \emph{natural orientation}.
\end{definition}
We will, more generally, apply this concept to any vertex-labelling $b$ of a graph $G$.

The definition of the natural orientation immediately implies the following:
\begin{lemma}\label{betatodirect}
    Let $G$ be a graph which has $\beta$-valuation $b$. Then the following multiset equation holds:
    \[\bigcup_{\{u,v\}\in E(G)} \{\mid b(v)-b(u) \mid \} = \bigcup_{(x,y)\in \overrightarrow{E}(G^b)} \{(b(y)-b(x) )\} =\{1, \ldots n\}\]
\end{lemma}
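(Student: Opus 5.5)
The proof is a direct bijection between the undirected edges of $G$ and the directed edges of $G^b$, under which each absolute difference becomes a signed difference with the same value.

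First, define $\phi : E(G) \to \E(G^b)$ by sending $\{u,v\}$ to $(x,y)$, where $\{x,y\}=\{u,v\}$ and $b(x)<b(y)$. Since $b$ is injective, $b(u)\neq b(v)$, so this ordering exists and is unique. By the definition of the natural orientation, $\phi(\{u,v\})$ is exactly the directed edge of $G^b$ obtained by orienting $\{u,v\}$. Every directed edge of $G^b$ comes from exactly one edge of $G$, and $G^b$ is oriented, so at most one of $(u,v),(v,u)$ occurs. Hence $\phi$ is a bijection.

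Second, compare the contributions edge by edge. If $\phi(\{u,v\})=(x,y)$, then $b(y)-b(x)>0$, so
\[ |b(v)-b(u)| = |b(y)-b(x)| = b(y)-b(x). \]
Since $\phi$ is a bijection, the multiset union over $E(G)$ of $\{|b(v)-b(u)|\}$ equals the multiset union over $\E(G^b)$ of $\{b(y)-b(x)\}$, with multiplicities preserved.

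Finally, by the definition of a $\beta$-valuation, the first multiset is $\{1,\ldots,n\}$. This gives the second equality, and the whole chain follows.

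There is no real obstacle. The only point needing care is that the correspondence between $E(G)$ and $\E(G^b)$ is a genuine bijection, so that multiplicities agree. This relies on the injectivity of $b$, which rules out ties that would leave an edge without an orientation.
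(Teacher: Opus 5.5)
Your proof is correct and is the argument the paper intends. The paper gives no written proof, saying only that the lemma follows immediately from the definition of the natural orientation. Your bijection between $E(G)$ and $\overrightarrow{E}(G^b)$, which is well defined because $b$ is injective, together with the identity $|b(v)-b(u)| = b(y)-b(x)$, spells that out.
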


We can immediately use $\beta$-valuations as a way to construct $G^b$-defined EDFs in cyclic groups, with $l=\lambda=1$. Throughout what follows, we let the elements of $\mathbb{Z}_{n+1}$ be $\{0,1,\ldots,n\}$, with the ordering $0<1< \cdots <n$.  Where appropriate, we will identify the elements of $\mathbb{Z}_{n+1}$ with the corresponding integers in $\{0,\ldots,n\}$.

\begin{theorem}
    Let $G$ be a graph with $n$ edges, with $\beta$-valuation $b$.  Then there exists an $(n,m,1,1;G^b)$-EDF in $\mathbb{Z}_{n+1}$.
\end{theorem}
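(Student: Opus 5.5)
The construction is to turn each vertex of $G$ into a singleton set in $\mathbb{Z}_{n+1}$ whose single element is that vertex's label. Here $m=|V(G)|$, and I read the parameter $n$ in the EDF notation as referring to the group $\mathbb{Z}_{n+1}$, whose nonidentity elements are $1,\ldots,n$. Index the vertices of $G^b$ as $0,1,\ldots,m-1$, and for each vertex $v$ put $A_v=\{b(v)\}\subseteq \mathbb{Z}_{n+1}$. The sets are subsets of $\mathbb{Z}_{n+1}$ because $b$ takes values in $\{0,\ldots,n\}$, which we identify with $\mathbb{Z}_{n+1}$. They are pairwise disjoint because $b$ is injective, and each has size $l=1$.

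Next I compute the multiset in Definition~\ref{def:H-defined EDF} with $H=G^b$. For a directed edge $(x,y)\in\E(G^b)$ we have
\[
\Delta(A_y,A_x)=\{b(y)-b(x)\},
\]
with the difference taken in $\mathbb{Z}_{n+1}$. This needs the convention that $\Delta(A_j,A_i)$ consists of the differences (element of $A_j$) minus (element of $A_i$). If the paper's convention turns out to be the opposite, I would instead orient each edge towards the smaller label, and the same argument goes through. By the definition of the natural orientation, $b(y)>b(x)$ as integers. Both labels lie in $\{0,\ldots,n\}$, so the integer difference $b(y)-b(x)$ lies in $\{1,\ldots,n\}$. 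It therefore coincides with the residue $b(y)-b(x)$ in $\mathbb{Z}_{n+1}$, because no reduction modulo $n+1$ takes place.

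Taking the union over all directed edges and applying Lemma~\ref{betatodirect} gives
\[
\bigcup_{(x,y)\in\E(G^b)}\Delta(A_y,A_x)=\{1,\ldots,n\}=\mathbb{Z}_{n+1}\setminus\{0\}
\]
as multisets. This is exactly the EDF condition with $\lambda=1$.

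The only real obstacle is this last point: making sure that integer differences and group differences agree. It is settled by the fact that the labels lie in $\{0,\ldots,n\}$ and the orientation makes every difference positive.
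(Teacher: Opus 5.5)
Your proof is correct and is essentially the paper's own argument: take singleton sets $A_v=\{b(v)\}\subseteq\mathbb{Z}_{n+1}$ and use Lemma~\ref{betatodirect} to get $\bigcup_{(u,v)\in\E(G^b)}\Delta(A_v,A_u)=\{1,\ldots,n\}=\mathbb{Z}_{n+1}\setminus\{0\}$. Your extra remarks make explicit what the paper leaves implicit: disjointness follows from injectivity, positive differences in $\{1,\ldots,n\}$ undergo no reduction modulo $n+1$, and the group actually has order $n+1$.
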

\begin{proof}
For each $v \in V(G)$, let $A_v=\{b(v)\}$, where these are viewed as disjoint subsets of $\mathbb{Z}_{n+1}$. Then:
\[\bigcup_{(u,v)\in E(G^b)} \Delta(A_v,A_u)= \bigcup_{(u,v)\in E(G^b)} \{b(v)-b(u)\}=\{1, \ldots n\}=\mathbb{Z}_{n+1}\backslash\{0\}.\]
Hence $(A_{v_1},A_{v_2},\ldots,A_{v_{m}})$ forms a $(n,m,1,1;G^b)$-EDF in $\mathbb{Z}_{n+1}$.
\end{proof}

\subsection{Graph blow-up constructions}

In \cite{PatStiCEDF}, a process is presented which allows larger graphs to be obtained from smaller ones, in a way which allows certain graph labellings to be carried-over to the larger graphs. (In \cite{PatStiCEDF}, these labellings were special $\beta$-valuations called $\alpha$-valuations; we will apply this to a more general class of labellings).  We refer to the process of obtaining the new graph as a ``blow-up construction".  For completeness, and to aid in our subsequent constructions, we introduce some new notation and provide full details of the relevant proofs.

\begin{definition}
Let $G$ be a graph. We say that a \emph{blow-up} of $G$ is a graph obtained from $G$ as follows: 
\begin{itemize}
\item each vertex $s \in V(G)$ is replaced by a set of vertices $V_s$ (of size at least $1$).
\item for $s,t \in V(G)$, each vertex in $V_s$ is adjacent to each vertex in $V_t$ precisely if $s$ and $t$ are adjacent in $G$.
\end{itemize}
If there is some $k$ such that $|V_s|=k$ for all $s \in V(G)$, the blow-up is said to be \emph{balanced}. 
\end{definition}

\begin{remark}
If a blow-up is balanced then it is uniquely determined by the size of the vertex blow-up sets.   We refer to this as \emph{the balanced blow-up of $G$ by a factor of $k$}. 
\end{remark}

\begin{definition}
The lexicographic product $G \cdot H$ of two graphs $G=(V(G),E(G))$ and $H=(V(H),E(H))$ is a graph on the vertices $V(G)\times V(H)$, where two vertices $(x,y)$ and $(u,v)$ are adjacent if and only if:
\[\{x,u\} \in E(G) \mbox{, or } x =u \textit{ and } \{y,v\} \in E(H).\]
\end{definition}

\begin{lemma}\label{lexi}
Let $G$ be a graph and denote by $K^c_l$ the empty graph on $l$ vertices. The lexicographic product $G \cdot K^c_l$ is the balanced blow-up of $G$ by a factor of $l$.
\end{lemma}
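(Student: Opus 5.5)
The plan is to unwind both definitions and exhibit the identification directly. For each $x \in V(G)$ I set $V_x = \{x\} \times V(K^c_l)$. These sets partition $V(G)\times V(K^c_l)$, which is the vertex set of $G \cdot K^c_l$. Each $V_x$ has size exactly $l$, so once the adjacency rule is checked the graph is a balanced blow-up with factor $l$.

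Next I check adjacency against the definition of the lexicographic product. Two vertices $(x,y)$ and $(u,v)$ are adjacent if and only if $\{x,u\}\in E(G)$, or $x=u$ and $\{y,v\}\in E(K^c_l)$. The empty graph $K^c_l$ has no edges, so the second alternative never occurs. Hence $(x,y)\sim(u,v)$ holds precisely when $\{x,u\}\in E(G)$.

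It remains to translate this into the two clauses of the blow-up definition.
\begin{itemize}
\item Since $G$ has no loops, $\{x,x\}\notin E(G)$, so no two vertices inside the same set $V_x$ are adjacent.
\item For distinct $s,t\in V(G)$, every vertex of $V_s$ is adjacent to every vertex of $V_t$ exactly when $s$ and $t$ are adjacent in $G$, and no edges join $V_s$ to $V_t$ otherwise.
\end{itemize}
This is precisely the blow-up of $G$ with $|V_s|=l$ for every $s$. By the preceding remark a balanced blow-up is uniquely determined by its factor, so $G\cdot K^c_l$ is \emph{the} balanced blow-up of $G$ by a factor of $l$, up to the obvious relabelling of vertices.

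There is no real obstacle here. The only points needing care are using the fact that $K^c_l$ is edgeless, so that it contributes no edges inside a block, and using the standing convention that $G$ has no loops.
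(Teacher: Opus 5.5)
Your proof is correct and follows essentially the same route as the paper. You take $V_x=\{x\}\times V(K^c_l)$ and use the edgelessness of $K^c_l$ to discard the second clause of the lexicographic adjacency rule, so that adjacency reduces to $\{x,u\}\in E(G)$; your remark that the no-loops convention rules out edges inside a block is a small detail the paper leaves implicit.
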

\begin{proof}
Let $H = G \cdot K_l^c$. Its vertex set is given by $V(H) = V(G) \times V(K_l^c)$. To see that the first blow-up condition is satisfied, note that $V(H)$ is constructed by taking each vertex $v \in V(G)$ and replacing it by a set ($V_v$ say) of $l$ vertices: 
    \[\{(v,x_0),(v,x_1),\ldots, (v,x_{l-1})\}.\]
Since all the sets are of the same size, the balanced condition holds. 
The edge set of $H$ is given by $E(H)=\{\{(x,y),(u,v) : \{x,y\} \in E(G) \text{ or } x = u \text{ and }\{y,v\} \in E(K_l^c)\}$. As $E(K_l^c) = \emptyset$, the second condition is never satisfied, so the edges of $H$ are given by:
	\[E(H)=\{(x,y),(u,v) : \{x,u\} \in E(G)\}.\]
In other words, the edges of $H$ occur precisely between all pairs of vertices such that one vertex is in $V_x$ and the other is in $V_u$, where $x$ and $u$ are adjacent in $G$.  Hence the second condition for a blow-up is satisfied.   
\end{proof}
\begin{lemma} \label{blow-upblow-up}
    Let $G,H$ and $F$ be graphs such that $G$ is a blow-up of $H$ and $H$ is a blow-up of $F$.  Then $G$ is a blow-up of $F$.
\end{lemma}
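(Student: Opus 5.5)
The plan is to compose the two blow-up structures directly. By hypothesis, $H$ is obtained from $F$ by replacing each $s \in V(F)$ with a nonempty set $V_s \subseteq V(H)$, and $G$ is obtained from $H$ by replacing each $h \in V(H)$ with a nonempty set $W_h \subseteq V(G)$. For each $s \in V(F)$ I will set
\[U_s = \bigcup_{h \in V_s} W_h ,\]
and then check that the sets $U_s$ witness $G$ as a blow-up of $F$.

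First I will confirm that the $U_s$ are well-formed replacement sets. The sets $V_s$ partition $V(H)$ and the sets $W_h$ partition $V(G)$. Hence the $U_s$ partition $V(G)$. Each $U_s$ is nonempty, since $V_s \neq \emptyset$ and every $W_h \neq \emptyset$. This gives the first blow-up condition.

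Next I will verify the adjacency condition. Take $x \in U_s$ and $y \in U_t$, so that $x \in W_h$ and $y \in W_{h'}$ for some $h \in V_s$ and $h' \in V_t$. Because $G$ is a blow-up of $H$, the vertices $x$ and $y$ are adjacent exactly when $h$ and $h'$ are adjacent in $H$. Because $H$ is a blow-up of $F$, the vertices $h$ and $h'$ are adjacent exactly when $s$ and $t$ are adjacent in $F$. Chaining these two equivalences, every vertex of $U_s$ is adjacent to every vertex of $U_t$ precisely when $\{s,t\}\in E(F)$, and otherwise no such pair is adjacent.

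The only point needing care is the case $s=t$. Here the condition requires that there are no edges inside $U_s$; this holds because $F$ has no loops, so no edges exist within a single $V_s$ or within a single $W_h$. The same reasoning covers pairs with $h = h'$ and pairs with $h \neq h'$ both lying in $V_s$. I do not expect any real obstacle: this is a bookkeeping argument, and the main task is to state the partition and the "precisely if" equivalence cleanly, including the non-adjacency direction.
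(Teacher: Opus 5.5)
Your proposal is correct and follows the same route as the paper. The paper also defines the composite replacement set $S_u=\bigcup_{v\in U_u}V_v$ and chains the two ``adjacent precisely if'' conditions through $H$ down to $F$. Your explicit check that the sets partition $V(G)$ and are nonempty, and your treatment of the case $s=t$, are small extra care that the paper leaves implicit. Strictly, in that case it is $H$ having no loops, not $F$, that rules out edges inside a single $W_h$.
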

\begin{proof}
Starting with $F$, apply the two blow-ups in turn to obtain $G$; we claim that the result is also a blow-up. In obtaining $H$ from $F$, each vertex $u \in V(F)$ is replaced by a set of vertices $U_u$. Then to obtain $G$ as a blow-up of $H$, each vertex $v \in V(H)$ is replaced by a set of vertices $V_v$.  Hence in $G$, each vertex $u \in V(F)$ has been replaced by the set $\cup_{v \in U_u}V_v$ (denote this by $S_u$), and the first blow-up condition holds. Let $a \in S_u$ and $b \in S_v$ be arbitrary.  Then $a \in V_c$ for some $c \in U_u$ and $b \in V_d$ for some $d \in U_v$.  There is an edge between $a$ and $b$ if and only if there is an edge between $c$ and $d$ in $H$, if and only if there is an edge between $u$ and $v$ in $F$.  This gives both parts of the blow-up condition.
\end{proof}

The following specific application to bipartite graphs will be important later.  For a bipartite graph $G$ with vertex bipartition $V(G)=S \cup T$, we denote by $G_{k_1,k_2}$ the blow-up of $G$ in which $|V_s| = k_1$ for all $s\in S$ and $|V_t| = k_2$ for all $t\in T$ (here the bipartition and the set sizes are sufficient to determine the blow-up uniquely).  
\begin{corollary}\label{lexiblow}
Let $G$ be a graph with vertex partition $V(G)=S \cup T$.  Then the graph obtained by first blowing-up the vertices in $S$ by a factor of $l$, then blowing-up the vertices in $T$ by a factor of $l$, is precisely the balanced blow-up of $G$ by a factor of $l$,  i.e. the lexicographic product of $G \cdot K_l^c$.
\end{corollary}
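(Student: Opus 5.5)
The plan is to realise the two-step process as two successive blow-ups and then apply Lemma~\ref{blow-upblow-up} followed by Lemma~\ref{lexi}. Write $G'$ for the graph obtained from $G$ by blowing up each vertex of $S$ by a factor of $l$ and leaving the vertices of $T$ alone, that is, replacing each vertex of $T$ by a set of size $1$. By definition $G'$ is a blow-up of $G$.

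Next I will describe $V(G')$. It is the union of the sets $V_s$, each of size $l$, for $s\in S$, together with the singletons $\{t\}$ for $t\in T$. The second step blows up every vertex coming from $T$ by a factor of $l$ and replaces every vertex of each $V_s$ by a singleton. The resulting graph $G''$ is therefore a blow-up of $G'$.

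By Lemma~\ref{blow-upblow-up}, $G''$ is a blow-up of $G$. Its proof shows that each $u\in V(G)$ is replaced by the union of the second-stage sets over the first-stage set of $u$. For $s\in S$ this is $l$ singletons, so the union has size $l$. For $t\in T$ it is a single set of size $l$. Every vertex of $G$ is thus replaced by exactly $l$ vertices, so $G''$ is a balanced blow-up by a factor of $l$. By the Remark it is \emph{the} balanced blow-up of $G$ by a factor of $l$, and by Lemma~\ref{lexi} this is $G\cdot K_l^c$.

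The only delicate point is the phrase ``blowing-up the vertices in $T$'' in the second step, since after the first step these vertices sit inside $G'$ rather than $G$. I will make this precise by identifying each $t\in T$ with its singleton in $G'$. Under this identification the second operation is a genuine blow-up of $G'$, with all other vertices given blow-up sets of size $1$. Nothing in the argument uses that $G$ is bipartite, so the result holds for any partition $V(G)=S\cup T$.
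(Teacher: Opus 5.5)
Your proof is correct and follows the same route as the paper. You realise the two steps as $G'=G_{l,1}$ followed by $G'_{1,l}$, check that every vertex of $G$ ends up replaced by exactly $l$ vertices, and finish with Lemma~\ref{lexi}, making explicit the appeal to Lemma~\ref{blow-upblow-up} and the size count that the paper compresses into ``by definition''; you are also right that the bipartiteness the paper mentions is not needed.
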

\begin{proof}
The first step yields the graph $G'=G_{l,1}$; this is bipartite, with bipartition $V(G')=(\cup_{s \in S}V_s) \cup T$.  The second step yields the graph $G'_{1,l}$; by definition, the resulting graph is the balanced blow-up of $G$ by a factor of $l$. The last part follows by Lemma \ref{lexi}.
\end{proof}

\section{Graceful labellings and corresponding EDFs}

Our strategy in this paper will be to show how various graph-labellings can be combined with the blow-up construction to obtain new digraph-defined EDFs.  However, the $\beta$-valuation condition alone is not sufficient to enable this approach; an extra condition is needed.  A ``proof of concept" of this type of approach, used to obtain CEDFs, is demonstrated in \cite{PatStiCEDF}.  We will use a more general class of valuation called near $\alpha$-valuations; however we first introduce the better-known special case of $\alpha$-valuations.

\subsection{$\alpha$-valuations of graphs}

\begin{definition}
An $\alpha$-valuation $b$ of a graph $G$ with $n$ edges is a $\beta$‐valuation which satisfies the following extra condition:
there exists some $x$ with $0 \leq x \leq n$ such that for each edge $\{u,v\} \in E(G)$: 
\[b(v) \leq x < b(u) \text{ or } b(u) \leq x < b(v).\]
\end{definition}

Rosa notes in \cite{Rosa} that a graph $G$ with an $\alpha$-valuation must be bipartite; a bipartition of $V(G)$ is given by the sets $V_{small}$ and $V_{large}$, where the former consists of those vertices with
labels at most $x$ and the latter consists of vertices with labels larger than $x$.

Various natural families of graphs are known to have $\alpha$-valuations. For example, the following $\alpha$-valuation for paths is adapted from \cite{BloHsu}. 
\begin{theorem}\label{pathalpha}
Let $P_m$ denote the path with $m$ vertices, with $V(P_m)=\{v_1,\ldots,v_m\}$.  Then an $\alpha$-valuation is given by:
\[a(v_i) = \begin{cases} 
          \frac{i-1}{2} & i \textit{ odd} \\
          m- \frac{i}{2} & i \textit{ even} \\
\end{cases}
\]
\end{theorem}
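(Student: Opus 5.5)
We verify directly that $a$ satisfies the three requirements for an $\alpha$-valuation of $P_m$. Here $P_m$ has $n=m-1$ edges, namely $\{v_i,v_{i+1}\}$ for $1\le i\le m-1$, so the labels must lie in $\{0,\ldots,m-1\}$. We take the threshold to be $x=\lfloor (m-1)/2\rfloor$.

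\emph{Range and injectivity.} For odd $i$, the value $\frac{i-1}{2}$ runs over $0,1,\ldots,\lfloor (m-1)/2\rfloor$. These values are strictly increasing in $i$, so they are distinct. For even $i$, the value $m-\frac{i}{2}$ runs over $m-1, m-2, \ldots, m-\lfloor m/2\rfloor$. These are strictly decreasing in $i$, so they are also distinct. The smallest even label is $m-\lfloor m/2\rfloor=\lceil m/2\rceil$, which exceeds the largest odd label $\lfloor (m-1)/2\rfloor$. Hence all labels lie in $\{0,\ldots,m-1\}$ and $a$ is injective. This also shows that odd-indexed vertices have labels at most $x$ and even-indexed vertices have labels greater than $x$. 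Since every edge joins an odd-indexed vertex to an even-indexed one, the extra $\alpha$-condition holds.

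\emph{Edge labels.} Every edge $\{v_i,v_{i+1}\}$ consists of one vertex of each parity. There are two cases.
\begin{itemize}
\item If $i$ is odd, the edge label is $\bigl(m-\frac{i+1}{2}\bigr)-\frac{i-1}{2}=m-i$.
\item If $i$ is even, the edge label is $\bigl(m-\frac{i}{2}\bigr)-\frac{i}{2}=m-i$.
\end{itemize}
In both cases the edge $\{v_i,v_{i+1}\}$ receives the label $m-i$. As $i$ ranges over $1,\ldots,m-1$, these labels are exactly $m-1,m-2,\ldots,1$, each occurring once. Thus the multiset of edge labels is $\{1,\ldots,n\}$, so $a$ is a $\beta$-valuation, and together with the threshold property it is an $\alpha$-valuation.

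The only step that needs real care is the floor/ceiling bookkeeping in the first part, namely that the even labels start strictly above the odd ones for both parities of $m$. Checking $m$ even and $m$ odd separately settles it immediately.
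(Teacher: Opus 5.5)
Your direct verification is correct and complete. The identity $\lfloor (m-1)/2\rfloor=\lceil m/2\rceil-1$ gives injectivity, the range $\{0,\ldots,m-1\}$ and the threshold condition at once, and the edge $\{v_i,v_{i+1}\}$ receives label $m-i$ in both parity cases. The paper gives no proof of its own here (it quotes the labelling from Bloom and Hsu), but your case-by-case computation of edge labels is the same kind of direct check the paper carries out for its cycle, ladder and sun-graph labellings.
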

In \cite{Rosa}, Rosa showed that all caterpillars have $\alpha$-valuations and gave a diagrammatic proof, a result which has since been extended in various ways, e.g. by amalgamating paths and caterpillars as in \cite{Barrientos}.  Non-trees can also possess $\alpha$-valuations. In \cite{Rosa}, an $\alpha$-valuation is given for the complete bipartite graph $K_{p,q}$.
\begin{theorem}
Let $K_{p,q}$ denote the complete bipartite graph with $V(G)=\{v_1,\ldots,v_p\} \cup \{u_1,\ldots, u_q\}$.
Then an $\alpha$-valuation $a$ is given by: $a(v_i)=i-1$ and $a(u_j)=jp$.
\end{theorem}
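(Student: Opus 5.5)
The verification splits into three checks: $a$ is an injective map into $\{0,\ldots,n\}$ with $n=pq$, the edge labels are exactly $\{1,\ldots,pq\}$, and a threshold $x$ separates the two sides of every edge. Here $K_{p,q}$ has $n=pq$ edges, one for each pair $(v_i,u_j)$ with $1\le i\le p$, $1\le j\le q$.

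\textbf{Injectivity and range.} The labels $a(v_i)=i-1$ are the distinct values $0,\ldots,p-1$. The labels $a(u_j)=jp$ are the distinct values $p,2p,\ldots,qp$. Since $p-1<p$, the two ranges are disjoint, so $a$ is injective. All labels lie in $\{0,\ldots,pq\}$.

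\textbf{The $\alpha$-condition.} Take $x=p-1$. For every edge $\{v_i,u_j\}$ we have
\[a(v_i)\le p-1<p\le a(u_j),\]
so the threshold condition holds on every edge.

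\textbf{Edge labels.} Because $a(u_j)>a(v_i)$ on every edge, the edge label is
\[|a(u_j)-a(v_i)| = jp-(i-1) = (j-1)p+(p-i+1).\]
As $i$ runs over $1,\ldots,p$, the quantity $r=p-i+1$ runs over $1,\ldots,p$. So the edge labels are the numbers $(j-1)p+r$ with $0\le j-1\le q-1$ and $1\le r\le p$.

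These labels cover $\{1,\ldots,pq\}$ exactly once: every integer $t\in\{1,\ldots,pq\}$ can be written uniquely as $(j-1)p+r$ with $1\le r\le p$, by a shifted form of the division algorithm applied to $t-1$. Hence the map $(i,j)\mapsto jp-i+1$ is a bijection onto $\{1,\ldots,pq\}$. This shows $a$ is a $\beta$-valuation, and together with the threshold $x=p-1$, an $\alpha$-valuation.

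The only step requiring any care is this bijection, and it is just base-$p$ digit decomposition; there is no real obstacle.
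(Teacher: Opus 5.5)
Your proof is correct and complete. Injectivity follows from the disjoint ranges $\{0,\dots,p-1\}$ and $\{p,2p,\dots,qp\}$, the threshold $x=p-1$ separates the two sides of every edge, and writing $t-1=(j-1)p+(r-1)$ shows that $(i,j)\mapsto jp-i+1$ is a bijection onto $\{1,\dots,pq\}$. The paper itself gives no proof of this statement (it only attributes the labelling to Rosa), and your direct check is the standard verification.
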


The cycle $C_m$ has an $\alpha$-valuation precisely if $m \equiv 0 \mod 4$; the following is from \cite{PatStiCEDF}.
\begin{theorem}\label{cyc}
Let $m \equiv 0 \mod 4$ and let $C_m$ denote the cycle of length $m$, with $V(C_m)=\{v_1,\ldots,v_m\}$.  Then an $\alpha$-valuation $a$ is given by:
\[a(v_i) = \begin{cases} 
          \frac{i-1}{2} & i \textit{ odd} \\
          m +1- \frac{i}{2} & i \textit{ even, }i \leq \frac{m}{2} \\
          m - \frac{i}{2} & i \textit{ even, }i > \frac{m}{2} \\
       \end{cases}
    \]
\end{theorem}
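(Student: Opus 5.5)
We verify the three requirements of an $\alpha$-valuation directly for the labelling $a$ of Theorem \ref{cyc}, with $n=m$ edges and $m=4k$. Write the edges as $\{v_i,v_{i+1}\}$ for $1\le i\le m-1$, together with the closing edge $\{v_m,v_1\}$. We take the threshold $x = \frac{m}{2}-1 = 2k-1$.

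First we check injectivity and the bipartition. Odd-indexed vertices receive labels $\frac{i-1}{2}\in\{0,\ldots,2k-1\}$, all distinct and all $\le x$. Even $i\le \frac{m}{2}$ give labels $m+1-\frac{i}{2}\in\{3k+1,\ldots,4k\}$. Even $i>\frac m2$ give $m-\frac i2\in\{2k,\ldots,3k-1\}$. These three ranges are pairwise disjoint, so $a$ is injective into $\{0,\ldots,m\}$; the label $3k$ is the one that is skipped. Every edge joins an odd index to an even index, including the closing edge $v_m v_1$ since $m$ is even. Hence every edge has one endpoint with label $\le x$ and the other with label $>x$, which gives the $\alpha$-condition once we know $a$ is a $\beta$-valuation.

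The main work is to show the edge differences are exactly $\{1,\ldots,m\}$. We split the edges into two groups according to the even endpoint $j$.

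For $j=2t\le 2k$, the even label is $4k+1-t$. Its odd neighbours are $v_{2t-1}$ with label $t-1$ and $v_{2t+1}$ with label $t$. The differences are $4k+2-2t$ and $4k+1-2t$. As $t$ runs over $1,\ldots,k$ these cover $\{2k+1,\ldots,4k\}$.

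For $j=2t$ with $k<t\le 2k$, the even label is $4k-t$. The neighbours $v_{2t-1}$ and $v_{2t+1}$ have labels $t-1$ and $t$. The exception is $t=2k$, where the second neighbour is $v_1$ with label $0$. For $k<t<2k$ the differences are $4k+1-2t$ and $4k-2t$, which cover $\{2,\ldots,2k-1\}$. For $t=2k$ the differences are $2k-(2k-1)=1$ and $2k-0=2k$.

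Altogether the differences are $\{1\}\cup\{2,\ldots,2k-1\}\cup\{2k\}\cup\{2k+1,\ldots,4k\}=\{1,\ldots,m\}$, each occurring exactly once. This bookkeeping, and in particular the wrap-around edge and the boundary at $i=\frac m2$, is the only delicate step. The hypothesis $m\equiv0 \bmod 4$ is used precisely so that $\frac m2$ is even and the split between the two even cases falls cleanly at $t=k$.
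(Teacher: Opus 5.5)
Your verification is correct. The label ranges, the threshold $x=\frac{m}{2}-1$, and the grouping of the $4k$ edges by their even endpoint (with the wrap-around edge $\{v_m,v_1\}$ handled at $t=2k$) give each of $1,\ldots,m$ exactly once. The paper does not prove this result itself but quotes it from \cite{PatStiCEDF}; your argument is the same direct edge-label bookkeeping the paper carries out for this labelling in Proposition \ref{thm:mcycle2mod4}, where it groups edges by their odd endpoint instead.
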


References for a range of graphs with $\alpha$-valuations is given in the dynamic survey \cite{Gal}.

While it is possible that all trees may admit $\beta$-valuations, it is already known that not all trees admit $\alpha$-valuations. Denote by $S_{m,l}$ the graph formed by taking the star $K_{m,1}$ and replacing each edge by a path of length $l$.  Then a tree of smallest size which does not possess an $\alpha$-valuation is given in \cite{Rosa}; it is the 7-vertex graph $S_{3,2}$.  However as shown in \cite{Barrientos}, graphs which contain this as a subgraph may still have $\alpha$-valuations.  

\begin{figure}
    \centering
    \begin{tikzpicture}[baseline=4mm, scale=0.9]
    \begin{scope}[radius=1mm]

    \fill(1.5,0.5) circle;
    \fill(0,-2.5) circle;
    \fill(3,-2.5) circle; 
    \fill(-1.5,-2.5) circle; 
    \fill(4.5,-2.5) circle; 
    \fill(0,-1) circle;
    \fill(3,-1) circle; 
    \fill(-1.5,-1) circle; 
    \fill(4.5,-1) circle; 
    \end{scope}
    \begin{scope}[thick]
    \draw (0,-1)--(0,-2.5);
    \draw (0,-1)--(1.5,0.5);
    \draw (3,-1)--(3,-2.5);
    \draw (3,-1)--(1.5,0.5);
    \draw (-1.5,-1)--(-1.5,-2.5);
    \draw (-1.5,-1)--(1.5,0.5);
    \draw (4.5,-1)--(4.5,-2.5);
    \draw (4.5,-1)--(1.5,0.5);
    \node at (1.5,0.75) {$x_0$};
    \node at (-1.75,-2.75) {$z_0$};
    \node at (-1.75,-1.25) {$y_0$};
    \node at (-0.25,-2.75) {$z_1$};
    \node at (-0.25,-1.25) {$y_1$};
    \node at (3.5,-2.75) {$z_{m-2}$};
    \node at (3.5,-1.25) {$y_{m-2}$};
    \node at (5,-2.75) {$z_{m-1}$};
    \node at (5,-1.25) {$y_{m-1}$};
    \node at (1.5,-2.75) {$\ldots$};
    \node at (1.5,-1.25) {$\ldots$};
    \end{scope}
    \end{tikzpicture}
    \caption{$S_{m,2}$}
    \label{fig:S_{m,2}}
\end{figure}

More generally, Rosa presented a collection of trees without $\alpha$-valuations; this is the set $S(2,4)$ of trees $T$ defined as follows. Recall that the \emph{diameter} of a graph is the maximum length of any shortest-path between two distinct vertices of the graph.

\begin{definition}\label{def:S(2,4)}
Let $T$ be a tree.  Then $T$ is in the set $S(2,4)$ if it possesses the following three properties:
\begin{itemize}
\item $T$ has diameter $4$; 
\item removing all leaf vertices and associated edges from $T$ yields a new graph $T'$ which is not a path; 
\item removing all leaf vertices and associated edges from $T'$ yields a single vertex.  
\end{itemize}   
\end{definition}

These graphs were further investigated by Huang, Kotzig and Rosa in \cite{HuKoRo}.

\subsection{Near $\alpha$-valuations of graphs}

In fact, for use in the blow-up construction to obtain digraph-defined EDFs, a weaker additional condition on our $\beta$-valuation is sufficient.  For a vertex $v$ in a graph, denote by $N(v)$ the neighbourhood of $v$, i.e. the set of all vertices adjacent to $v$ in the graph.

\begin{definition}
A near $\alpha$-valuation $b$ of a graph $G$ with $n$ edges is a $\beta$‐valuation which satisfies the following extra condition: there is a partition of $V(G)$ into $V_{small}$ and $V_{large}$ such that
\begin{itemize}
\item for each $v \in V_{small}, b(v) < b(u)$ for all $u \in N(v)$, and 
\item for each $v \in V_{large}, b(v) > b(u)$ for all $u \in N(v)$.
\end{itemize}
In other words, the vertices of $G$ can be partitioned into two sets $V_{large}$ and $V_{small}$ such that for each $v \in V_{large}$, its label is larger than the label of any of its neighbours, and for each $v \in V_{small}$, its label is smaller than the label of any of its neighbours.
\end{definition}

This concept was introduced independently in \cite{Cahit}, \cite{Grannell} and \cite{El-Zanati}, under different names.  When we apply the natural orientation, this condition is equivalent to requiring that every vertex in the digraph is either a source or a sink. This is called a source-sink orientation and has been studied by various authors. Digraphs possessing a source-sink orientation are precisely those which avoid the directed two-edge path as a subgraph.

The following result summarizes some key properties from the literature. 
\begin{theorem}\label{thm:nearprops}
\begin{itemize}
\item[(i)] For a $\beta$-valuation $b$ of a graph $G$, $b$ is a near $\alpha$-valuation precisely if every edge of $G$ is of the form $\{u,v\}$ where $u \in V_{small}$, $v \in V_{large}$ and $b(u)<b(v)$. In particular, if a graph $G$ admits a near $\alpha$-valuation, then it is bipartite.
\item[(ii)] Every $\alpha$-valuation of a graph $G$ is a near $\alpha$-valuation of $G$.
\item[(iii)] There exist graphs which admit near $\alpha$-valuations but do not admit $\alpha$-valuations.
\end{itemize}
\end{theorem}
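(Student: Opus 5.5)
Parts (i) and (ii) follow directly from the definitions, and part (iii) needs an explicit example. For (iii), we pair a graph known to have no $\alpha$-valuation with a hand-built labelling that is a near $\alpha$-valuation.

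\emph{(i)} Suppose $b$ is a near $\alpha$-valuation with partition $V_{small}\cup V_{large}$, and take an edge $\{u,v\}$. If both endpoints lay in $V_{small}$, we would need $b(u)<b(v)$ and $b(v)<b(u)$, which is impossible. Symmetrically, both endpoints cannot lie in $V_{large}$. So every edge joins $u\in V_{small}$ to $v\in V_{large}$, and the $V_{small}$ condition at $u$ gives $b(u)<b(v)$.

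Conversely, suppose every edge has this form. Take $v\in V_{small}$. Each neighbour of $v$ is the other endpoint of such an edge, so it lies in $V_{large}$ and has a larger label. The same argument with the inequalities reversed handles $v\in V_{large}$.

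Since no edge lies inside either part, $V_{small}\cup V_{large}$ is a bipartition of $G$, so $G$ is bipartite.

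\emph{(ii)} Let $a$ be an $\alpha$-valuation with threshold $x$. Set $V_{small}=\{v: a(v)\le x\}$ and $V_{large}=\{v: a(v)>x\}$. By the $\alpha$ condition, every edge has one endpoint with label at most $x$ and the other with label greater than $x$. Hence each neighbour of a vertex in $V_{small}$ has a strictly larger label, and each neighbour of a vertex in $V_{large}$ has a strictly smaller label. This is exactly the near $\alpha$ condition.

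\emph{(iii)} I would use the smallest tree with no $\alpha$-valuation, $S_{3,2}$ (Rosa \cite{Rosa}). It has centre $x_0$, middle vertices $y_0,y_1,y_2$, leaves $z_0,z_1,z_2$, and $n=6$ edges. The task is to give an explicit graceful labelling whose natural orientation is source-sink; by (i) this amounts to checking that every edge runs from $V_{small}$ to $V_{large}$ with increasing labels.

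The bipartition has $\{x_0,z_0,z_1,z_2\}$ on one side and $\{y_0,y_1,y_2\}$ on the other. I would take the $y_i$ as sinks and the other side as sources, and search by hand for labels. The difference $6$ forces labels $0$ and $6$ on the ends of some edge. A trial that works is $x_0=0$ with $y$-labels $6,5,3$, giving differences $6,5,3$. The leaves then need differences $4,2,1$ with labels below their $y$-neighbour and distinct from $0,6,5,3$. This is achieved by $z_0=2$ under $y=6$ (difference $4$), $z_2=1$ under $y=3$ (difference $2$), and $z_1=4$ under $y=5$ (difference $1$).

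The labels $\{0,6,5,3,2,4,1\}$ are distinct and the edge differences are $\{1,\dots,6\}$, so this is a $\beta$-valuation. Each $y_i$ exceeds both of its neighbours and every other vertex is below its neighbour, so it is a near $\alpha$-valuation. Together with Rosa's result that $S_{3,2}$ has no $\alpha$-valuation, this proves (iii).

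The main obstacle is this search for labels, which is a small finite check. Alternatively, one can simply cite \cite{El-Zanati}.
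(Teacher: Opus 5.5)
Your proposal is correct and follows the paper's proof essentially step for step. Parts (i) and (ii) use the same direct definitional arguments, and for (iii) you take Rosa's tree $S_{3,2}$ with exactly the paper's labelling (centre $0$, branches $6$--$2$, $5$--$4$, $3$--$1$), together with Rosa's result that $S_{3,2}$ has no $\alpha$-valuation.
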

\begin{proof}
i) Let $b$ be a $\beta$-valuation of $G$. Suppose $b$ is a near $\alpha$-valuation with $V(G)=V_{small} \cup V_{large}$.  Let $\{u,v\}$ be an edge in $G$.  If $u\in V_{small}$ then as $v \in N(u)$ we have $b(u)<b(v)$; but then $u \in N(v)$ so $v \in V_{large}$.  Similarly if $u \in V_{large}$, $b(u)>b(v)$ and so $v \in V_{small}$. Conversely, suppose every edge of $G$ is of the form $\{u,v\}$ where $u \in V_{small}$, $v \in V_{large}$ and $b(u)<b(v)$.  Let $u \in V_{small}$; then for any $y \in N(u)$, $\{u,y\} \in E(G)$ and so $y \in V_{large}$ with $b(u)<b(y)$. Similarly if $u \in V_{large}$; then for any $y \in N(u)$, $y \in V_{small}$ with $b(u)>b(y)$, so $b$ is a near $\alpha$-valuation.\\
ii) Let $a$ be an $\alpha$-valuation of $G$. Then there is an integer $x$ such that for each $\{u,v\} \in E(G)$, $a(v) \leq x <a(u)$ or $a(u) \leq x <a(v)$. Define $V_{small}=\{v \in E(G): a(v) \leq x\}$ and $V_{large}=\{ v \in E(G): a(v)>x\}$.  This partitions $V(G)$, and each edge of $G$ is of the form $\{s,t\}$ with $s \in V_{small}$, $t \in V_{large}$ and $a(s) \leq x < a(t)$.  Hence this is a near $\alpha$-valuation.\\
iii) Below is a near $\alpha$-valuation for Rosa's graph $S_{3,2}$ which has no $\alpha-$valuation  \cite{Rosa}:
\begin{center}
    \begin{tikzpicture}[baseline=4mm, scale=0.7]
    \begin{scope}[radius=1mm]

    \fill(1.5,0.5) circle;
    \fill(-1.5,-2.5) circle; 
    \fill(4.5,-2.5) circle; 
    \fill(-1.5,-1) circle; 
    \fill(4.5,-1) circle; 
    \fill(1.5,-1) circle;
    \fill(1.5,-2.5) circle;
    \end{scope}
    \begin{scope}[thick]
    \draw (-1.5,-1)--(-1.5,-2.5);
    \draw (-1.5,-1)--(1.5,0.5);
    \draw (1.5,-1)--(1.5,-2.5);
    \draw (1.5,-1)--(1.5,0.5);
    \draw (4.5,-1)--(4.5,-2.5);
    \draw (4.5,-1)--(1.5,0.5);
    \node at (1.5,0.9) {$0$};
    \node at (-1.9,-2.5) {$4$};
    \node at (-1.9,-1) {$5$};
    \node at (5,-2.5) {$2$};
    \node at (5,-1) {$6$};
    \node at (1.9,-2.5) {$1$};
    \node at (1.9,-1) {$3$};
    \end{scope}
    \end{tikzpicture}
    \end{center}
\end{proof}

Similarly to the Graceful Tree Conjecture, there is a (still open) conjecture that every tree admits a near $\alpha$-valuation (this is first stated in \cite{El-Zanati}).

\begin{conjecture}\label{nearalphatree}
Every tree has a near $\alpha$-valuation.
\end{conjecture}

\begin{remark}\label{20nearalphatree}
In \cite{Grannell}, the conjecture is tested computationally for up to 20 vertices: it is confirmed that every tree with at most 20 vertices has a near $\alpha$-valuation.
\end{remark}

In \cite{El-Zanati}, El-Zanati et al exhibit two types of graphs that admit near $\alpha$-valuations but not $\alpha$-valuations. The first is $S_{m,2}$; recall that this is the graph defined by replacing each edge of $K_{m,1}$ by a path of length $2$.  The second is defined as follows:  denote by $B_{m,n}$ the rooted tree which has $3$ vertices $\{v_0,v_1,v_2\}$ on the first level, such that $v_0$ has $m$ children and $v_1$ and $v_2$ each have $n$ children.  It is shown that the families of graphs given by $S_{m,2}$ ($m \in \mathbb{N}$) and $B_{m,n}$ ($m,n \in \mathbb{N}$) admit near $\alpha$-valuations, but do not admit $\alpha$-valuations as they satisfy the conditions given by Rosa to belong to the set $S(2,4)$ (see Definition \ref{def:S(2,4)}).

\begin{figure}
    \centering
    \begin{tikzpicture}[baseline=4mm]
    \begin{scope}[radius=1mm]

    \fill(1.5,0.5) circle;
    \fill(1.5,-1) circle;
    \fill(0.5,-2.5) circle;
    \fill(2.5,-2.5) circle;
    \fill(-2.5,-2.5) circle; 
    \fill(-0.5,-2.5) circle;
    \fill(5.5,-2.5) circle; 
    \fill(3.5,-2.5) circle;
    \fill(-1.5,-1) circle; 
    \fill(4.5,-1) circle; 
    \end{scope}
    \begin{scope}[thick]
    
    \draw (-1.5,-1)--(-2.5,-2.5);
    \draw (-1.5,-1)--(-0.5,-2.5);
    \draw (-1.5,-1)--(1.5,0.5);
    
    \draw (1.5,-1)--(2.5,-2.5);
    \draw (1.5,-1)--(0.5,-2.5);
    \draw (1.5,-1)--(1.5,0.5);
    
    \draw (4.5,-1)--(3.5,-2.5);
    \draw (4.5,-1)--(5.5,-2.5);
    \draw (4.5,-1)--(1.5,0.5);

    \node at (1.5,0.75) {$x_0$};
    \node at (-1.5,-2.75) {$\ldots$};
    \node at (-2,-0.5) {$y_0$};

    \node at (4.5,-2.75) {$\ldots$};
    \node at (4.5,-0.5) {$y_2$};
    \node at (1.5,-2.75) {$\ldots$};
    \node at (1,-0.5) {$y_1$};
    \node at (-2.5,-2.75) {$z_{0,0}$};
    \node at (-0.5,-2.75) {$z_{0,m}$};

    \node at (2.5,-2.75) {$z_{1,n}$};
    \node at (0.5,-2.75) {$z_{1,0}$};

    \node at (3.5,-2.75) {$z_{2,0}$};
    \node at (5.5,-2.75) {$z_{2,n}$};

    \end{scope}
    \end{tikzpicture}
    \caption{$B_{m,n}$}
    \label{fig:B_{m,n}}
\end{figure}

We next present a new construction for trees possessing near $\alpha$-valuations, but which do not possess $\alpha$-valuations.  Our approach uses cyclotomy.

\begin{construction}\label{constr:near alpha}
Let $p \geq 11$ be a prime such that $p \not \equiv \pm 1 \mod 8 $.  Write the elements of $\mathrm{GF}(p)$ as $\{0,1,\ldots,p-1\}$ with the natural ordering $0<1< \cdots < p-1$; we will identify these with the corresponding integers $\{0,1,\ldots,p-1\}$ as the labels for the near $\alpha$-valuation.  Let $S$ denote the set of nonzero squares in $\mathrm{GF}(p)$ and $N$ the set of nonsquares; again, we will identify these with the corresponding integers.

We construct a vertex-labelled tree as follows; we define it as a rooted tree with levels $0$, $1$ and $2$:
\begin{itemize}
\item[(i)] In Level $0$, place a single vertex labelled with $0$;
\item[(ii)] in Level $1$, as child nodes of $0$, place vertices labelled with all elements of $N$, and with those elements of $S$ which are greater than $\frac{p}{2}$;
\item[(iii)] in Level $2$, place vertices labelled with each element of $S$ which is less than $\frac{p}{2}$, such that each vertex labelled $s \in S$ occurs as the child node of the vertex labelled with $2s \in N$ (which lies on Level $1$).
\end{itemize}
\end{construction}

\begin{theorem}
Let $p \geq 11$ be a prime such that $p \not \equiv \pm 1 \mod 8 $.  
Then Construction \ref{constr:near alpha} produces a tree possessing a near $\alpha$-valuation but no $\alpha$-valuation. If $p \equiv 5 \mod 8$, this is a rooted tree with $3(p-1)/4$ children, of which $(p-1)/4$ have one child each.
\end{theorem}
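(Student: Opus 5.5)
The plan is to verify three things in turn: that Construction \ref{constr:near alpha} really yields a tree, that the labelling is a near $\alpha$-valuation, and that the tree lies in Rosa's family $S(2,4)$ (Definition \ref{def:S(2,4)}), which has no $\alpha$-valuations by \cite{Rosa}. The arithmetic fact driving everything is that $p \not\equiv \pm 1 \bmod 8$ means $2$ is a nonsquare mod $p$. Hence for each $s \in S$ we get $2s \in N$, so the parent demanded in step (iii) exists. For $s<p/2$, the residue $2s$ equals the integer $2s$ with no wrap-around.

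\textbf{Tree and $\beta$-valuation.} The labels used are $\{0\} \cup N \cup S$, which is every element of $\{0,\ldots,p-1\}$ exactly once, so there are $p$ vertices. Every non-root vertex has exactly one parent: for level-2 vertices this is because $s \mapsto 2s$ is injective. So the graph is a connected rooted tree with $p-1$ edges. For the edge differences:
\begin{itemize}
\item a level-1 vertex labelled $x$ contributes $|x-0|=x$;
\item a level-2 vertex $s$ contributes $2s-s=s$, computed as integers since $2s<p$.
\end{itemize}
The multiset of edge labels is therefore $N \cup \{s\in S: s>p/2\} \cup \{s \in S: s<p/2\}$. Since $p/2$ is not an integer this is $\{1,\ldots,p-1\}$, so the labelling is a $\beta$-valuation.

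\textbf{Near $\alpha$-condition.} Take $V_{small}=\{0\}\cup\{\text{level-2 vertices}\}$ and $V_{large}=\{\text{level-1 vertices}\}$, and check the criterion of Theorem \ref{thm:nearprops}(i). Every edge joins level 1 to level 0 or to level 2. The root $0$ is below every label. A level-2 vertex $s$ lies below its parent $2s$. So every edge goes from $V_{small}$ to $V_{large}$ with increasing label, and the labelling is a near $\alpha$-valuation.

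\textbf{Membership in $S(2,4)$.} Let $k$ be the number of squares below $p/2$, which is the number of level-1 vertices having a child. Deleting leaves removes all level-2 vertices and all childless level-1 vertices. This leaves $T'$, a star centred at $0$ with $k$ leaves, and deleting the leaves of $T'$ leaves the single vertex $0$. So it suffices to show $k\ge 3$. This gives both that $T'$ is not a path, and diameter exactly $4$, realised by two level-2 vertices with different parents (no longer path exists in a depth-2 rooted tree).

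The small case check for $k \geq 3$ is the only real obstacle. For $p\ge 19$ the squares $1,4,9$ are all below $p/2$. For the remaining admissible primes:
\begin{itemize}
\item $p=11$: the squares below $5.5$ are $1,3,4,5$;
\item $p=13$: the squares below $6.5$ are $1,3,4$.
\end{itemize}
Rosa's theorem then rules out an $\alpha$-valuation.

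\textbf{Counts for $p\equiv 5 \bmod 8$.} Here $p \equiv 1 \bmod 4$, so $-1$ is a square and $S$ is closed under $s\mapsto p-s$. Hence exactly $(p-1)/4$ squares lie below $p/2$, and these are the level-2 vertices. Level 1 then consists of the $(p-1)/2$ elements of $N$ together with $(p-1)/4$ squares, giving $3(p-1)/4$ children of the root. Exactly $(p-1)/4$ of these have a child, each having exactly one child by the injectivity of $s\mapsto 2s$.
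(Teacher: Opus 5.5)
Your proof is correct and follows the paper's route: $2$ is a nonsquare so $N=2S$; the edge labels are $N\cup S=\{1,\ldots,p-1\}$; the levels give the $V_{small}/V_{large}$ split; and the tree lies in $S(2,4)$ once at least three squares lie below $p/2$. Your uniform check via $1,4,9$ for $p\ge 19$ plus $p=11,13$ is a tidier version of the paper's split by $p \bmod 4$. One small misattribution: each level-2 vertex has exactly one parent by construction, whereas injectivity of $s\mapsto 2s$ is what guarantees each level-1 vertex has at most one child.
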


\begin{proof}
The cyclotomic classes of order $2$ in $GF(p)$ are $C_0^2=S$ (the set of non-zero squares) and its multiplicative coset $C_1^2=N$ (the set of non-squares). By \cite{Storer}, as $p \not \equiv \pm 1 \mod 8 $, the element $2$ is a nonsquare in $GF(p)$, i.e. $2 \in N$. So $N=2C_0^2=2S$, and so $GF(p)$ is the disjoint union $\{0\} \cup S \cup 2S$.  When $p \equiv 1 \mod 4$, $-1 \in S$ so $x \in S$ if and only if $-x \in S$, i.e. half of the elements of $S$ are greater than $p/2$, namely $(p-1)/4$ elements; in this case Level 1 has $3(p-1)/4$ elements.  

We first show we have a $\beta$-valuation.  Since there are $p$ vertices, labelled with distinct elements of $\{0,1,\ldots, p-1\}$, this is an injective function $b$ from $V(G)$ to $\{0,\ldots,p-1\}$.  With the given vertex labelling, each edge $\{u,v\}$ with $u$ in Level 0 and $v$ in Level 1 is labelled by $|b(v)-b(u)|$ which is $b(v)$ since $b(u)=0$. This yields, as edge-labels, all elements in N and those elements of $S$ which are greater than $\frac{p}{2}$. Now, each edge $\{u,v\}$ with $u$ in Level 1 and $v$ in Level 2 has $b(u)=2s$ and $b(v)=s$ for some $s<p/2$, i.e. $0<s<2s<p$ in the element ordering. Its edge-label is given by $|2s - s| = s$. This yields the remaining elements in $S$ as edge-labels, so we have a $\beta$-valuation. 

By construction, each vertex-label in Level 1 is larger than $0$. Each vertex labelled $s$ in Level 2 is adjacent to the vertex labelled $2s$ in Level 1, and as each $s < p/2$ we have $2s > s$. Hence each vertex is larger than all of its neighbours or smaller than all of its neighbours, so this is a near $\alpha$-valuation. 

Finally, we show that this (unlabelled) tree does not possess any $\alpha$-valuation since it is a member of Rosa's class of counterexamples $S(2,4)$, defined in Definition \ref{def:S(2,4)}.  By construction, the diameter is $4$ if there are at least $2$ edges between Level 1 and Level 2; however to ensure that $T'$ is not a path, we require at least $3$ such edges (the third condition is automatically satisfied).  We claim that there are always at least three $x \in S$ such that $x<p/2$.  When $p \equiv 1 \mod 4$, from above there are $(p-1)/4$ primes less than $p/2$, which is at least $3$ as $p \geq 13$.  When $p \equiv 3 \mod 4$, for $p>32$ we have that $\{1,4,16\} \in S$ (and $\{2,8,32\} \in N$) so this is sufficient.  The only primes less than $32$, for which $2$ is a non-square and which are congruent to $3 \mod 4$ are $\{11,19\}$: direct checking shows that there are at least $3$ non-zero squares less than $p/2$ in each case.
\end{proof}

\begin{example}\label{nearalphaexmp}
Let $p=11$.  Here $S=\{1,3,4,5,9\}$ and $N=\{2,6,7,8,10\}$. Take vertex-labels as follows: Level $0=\{0\}$, Level $1=\{2,6,7,8,9,10\}$ and Level $2=\{1,3,4,5\}$ with edges $\{(0,x): x \in N \cup \{9\}\}$ and $\{(1,2), (3,6), (4,8), (5,10)\}$. This is shown in Figure \ref{fig:nearalphaexmp}.
\end{example}

\begin{figure}
    \centering
    \begin{tikzpicture}[baseline=4mm]
    \begin{scope}[radius=1mm]

    \fill(1.5,0.5) circle;
    \fill(1.5,-1) circle;
    \fill(0,-2.5) circle;
    \fill(3,-2.5) circle; 
    \fill(-1.5,-2.5) circle; 
    \fill(6,-1) circle; 
    \fill(6,-2.5) circle; 
    \fill(0,-1) circle;
    \fill(3,-1) circle; 
    \fill(-1.5,-1) circle; 
    \fill(4.5,-1) circle; 
    \end{scope}
    \begin{scope}[thick]
    \draw (0,-1)--(0,-2.5);
    \draw (0,-1)--(1.5,0.5);
    \draw (3,-1)--(3,-2.5);
    \draw (3,-1)--(1.5,0.5);
    \draw (1.5,-1)--(1.5,0.5);
    \draw (6,-1)--(1.5,0.5);
    \draw (6,-1)--(6,-2.5);
    \draw (-1.5,-1)--(-1.5,-2.5);
    \draw (-1.5,-1)--(1.5,0.5);
    \draw (4.5,-1)--(1.5,0.5);
    \node at (1.5,1) {0};
    \node at (-1.75,-2.75) {1};
    \node at (-1.75,-1.25) {2};
    \node at (-0.25,-2.75) {3};
    \node at (-0.25,-1.25) {6};
    \node at (3.5,-2.75) {4};
    \node at (3.5,-1.25) {8};
    \node at (6.5,-1.25) {10};
    \node at (6.5,-2.75) {5};

    \node at (5,-1.25) {9};
    \node at (1.75,-1.25) {7};
    \end{scope}
    \end{tikzpicture}
    \caption{Example \ref{nearalphaexmp}}
    \label{fig:nearalphaexmp}
\end{figure}

Finally, we consider combining graphs with near $\alpha$-valuations to make new graphs.  In \cite{Snevily}, Snevily defines the weak tensor product of two bipartite graphs:

\begin{definition}
The weak tensor product of two bipartite graphs $G$ and $H$ with partitions $V_1$ and $V_2$ and $W_1$ and $W_2$ respectively is denoted $G \overline{\otimes} H$. It is a graph on the vertices $(V_1 \times W_1) \cup (V_2 \times W_2)$ where vertices $(x,y)$ and $(u,v)$ are adjacent if and only if:
\[\{x,u\} \in E(G) \mbox{ and }\{y,v\} \in E(H).\]
\end{definition}

In \cite{El-Zanati}, it is shown that this can be used to create more graphs with near $\alpha$-valuations.

\begin{theorem}\label{nearalphaprod}
If $G$ and $H$ admit near $\alpha$-valuations $\gamma$ and $\delta$ respectively, then their weak tensor product (with respect to the vertex bipartitions induced by $\gamma$  and $\delta$) admits a near $\alpha$-valuation.
\end{theorem}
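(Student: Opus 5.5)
The plan is to write down an explicit labelling of $G \overline{\otimes} H$ and check it directly. Let $G$ have $m$ edges and near $\alpha$-valuation $\gamma$ with bipartition $V_1=V_{small}$, $V_2=V_{large}$. Let $H$ have $n$ edges and near $\alpha$-valuation $\delta$ with bipartition $W_1=V_{small}$, $W_2=V_{large}$.

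By Theorem \ref{thm:nearprops}(i), every edge of $G$ has the form $\{s,t\}$ with $s\in V_1$, $t\in V_2$ and $\gamma(s)<\gamma(t)$, and similarly for $H$. Hence the edges of $G\overline{\otimes}H$ are exactly the pairs $\{(s,s'),(t,t')\}$ with $\{s,t\}\in E(G)$, $\{s',t'\}\in E(H)$, $s\in V_1$, $s'\in W_1$. So there are precisely $mn$ edges, one for each pair of edges (one from $G$, one from $H$). Since $G$ and $H$ have no isolated vertices, neither does the product.

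Define the labelling $\varphi$ by
\[
\varphi(x,y)=\gamma(x)+m\,\delta(y) \quad \text{for } (x,y)\in V_1\times W_1,
\]
\[
\varphi(u,v)=\gamma(u)+m(\delta(v)-1) \quad \text{for } (u,v)\in V_2\times W_2.
\]
For the edge coming from $\{s,t\}$ and $\{s',t'\}$, we get
\[
\varphi(t,t')-\varphi(s,s')=a+m(b-1),
\]
where $a=\gamma(t)-\gamma(s)\in\{1,\dots,m\}$ and $b=\delta(t')-\delta(s')\in\{1,\dots,n\}$. As $(a,b)$ runs bijectively over $\{1,\dots,m\}\times\{1,\dots,n\}$ (because $\gamma$ and $\delta$ are $\beta$-valuations), the values $a+m(b-1)$ run exactly once over $\{1,\dots,mn\}$. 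This difference is always positive, so every vertex of $V_1\times W_1$ is smaller than all its neighbours and every vertex of $V_2\times W_2$ is larger than all its neighbours. Thus the near $\alpha$ condition holds with $V_{small}=V_1\times W_1$ and $V_{large}=V_2\times W_2$.

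It remains to show that $\varphi$ is an injection into $\{0,\dots,mn\}$; this is the step needing care. For the range, a small vertex of $G$ has a strictly larger neighbour, so $\gamma(x)\le m-1$, and likewise $\delta(y)\le n-1$. Hence small labels lie in $[0,mn-1]$. Large vertices have $\gamma(u)\ge1$ and $\delta(v)\ge 1$, so large labels lie in $[1,mn]$.

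Within each class, injectivity follows from uniqueness of base-$m$ representation. For small vertices use the digit $\gamma(x)\in[0,m-1]$. For large vertices use the digit $\gamma(u)-1\in[0,m-1]$.

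The main obstacle is a possible collision between a small label and a large label. Suppose $\gamma(x)+m\delta(y)=\gamma(u)+m(\delta(v)-1)$ and reduce modulo $m$.
\begin{itemize}
\item If $\gamma(u)<m$, then $\gamma(x)=\gamma(u)$ with $x\ne u$ (they lie in different parts), contradicting injectivity of $\gamma$.
\item If $\gamma(u)=m$, then $\gamma(x)=0$ and $\delta(y)=\delta(v)$ with $y\ne v$, contradicting injectivity of $\delta$.
\end{itemize}
Hence $\varphi$ is a $\beta$-valuation and, by the above, a near $\alpha$-valuation of $G\overline{\otimes}H$.
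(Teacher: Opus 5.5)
Your proof is correct and uses exactly the labelling $\sigma$ that the paper quotes from El-Zanati et al.: small--small vertices get $\gamma(x)+m\delta(y)$, and large--large vertices get $\gamma(u)+m(\delta(v)-1)$. The paper only states this labelling without verifying it, and your checks correctly supply the omitted argument: the edge-label bijection $(a,b)\mapsto a+m(b-1)$, the range bounds, and injectivity, including the cross-class case split on whether $\gamma(u)=m$.
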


We describe the method used in \cite{El-Zanati} to create the near $\alpha$-valuations for $G \overline{\otimes} H$. Since $G$ and $H$ have near $\alpha$-valuations ($\gamma$ and $\delta$ respectively), they are bipartite; we denote the bipartitions corresponding to $V_{small}$ and $V_{large}$ for these near $\alpha$-valuations as $V(G)=V(G)_{small} \cup V(G)_{large} $ and $V(H)=V(H)_{small} \cup V(H)_{large}$. (Note that it is essential to use this specific choice of bipartitions.)  We also suppose $G$ and $H$ have $m$ and $n$ edges respectively. The function $\sigma$ defined by:
\begin{itemize}
\item $\sigma(v_1,w_1) = m\delta(w_1)+\gamma(v_1)$ for $(v_1,w_1) \in V(G)_{small} \times V(H)_{small}$
\item $\sigma(v_2,w_2) = m(\delta(w_2)-1)+\gamma(v_2)$ for $(v_2,w_2) \in V(G)_{large} \times V(H)_{large}$
\end{itemize}
is a near $\alpha$-valuation for $G \overline{\otimes} H$.

\section{Digraph-defined EDFs from near $\alpha$-valuations}

We now present theorems for constructing digraph-defined EDFs in cyclic groups from any graphs possessing near $\alpha$-valuations.  Various results about CEDFs from $\alpha$-valuations which were explicitly or implicitly present in \cite{PatStiCEDF} are special cases of these. For completeness and to illuminate how the labelling properties are used, we give complete proofs.

\begin{lemma}\label{directtopm}
Let $G^*$ be a directed graph, and let $G$ be the corresponding undirected graph.  Let $p:V(G) \rightarrow \{0,1,\ldots,n\}$ ($n=|E(G)|-1$) be a one-to-one map. 
If the multiset equation 
    $$\bigcup_{(u,v)\in \overrightarrow{E}(G^*)} \{ p(v)-p(u) \} =\{1, \ldots n\}$$ 
    holds, and for each directed edge $(u,v)\in \overrightarrow{E}(G^*)$ we have that $p(u) < p(v)$, 
    then $G$ has a near $\alpha$-valuation given by $p$.
\end{lemma}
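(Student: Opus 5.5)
The plan is a counting argument on the sizes of the two multisets in the hypothesis. With $n=|E(G)|-1$ as written, I expect the hypothesis can essentially never be satisfied, so the lemma should hold vacuously, apart from a degenerate case that I will check directly.

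First I will relate the directed and undirected edge sets. By the standing convention in Section 2, $G^*$ is oriented: for distinct vertices $u,v$, at most one of $(u,v)$ and $(v,u)$ lies in $\overrightarrow{E}(G^*)$. Since $G$ is the corresponding undirected graph, forgetting orientation is a bijection $\overrightarrow{E}(G^*)\to E(G)$. Hence $|\overrightarrow{E}(G^*)|=|E(G)|$.

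Next I will compare cardinalities. The left-hand multiset $\bigcup_{(u,v)\in \overrightarrow{E}(G^*)}\{p(v)-p(u)\}$ contributes one element, counted with multiplicity, per directed edge. So it has exactly $|E(G)|$ elements. The right-hand side $\{1,\ldots,n\}$ has $n=|E(G)|-1$ elements when $|E(G)|\geq 1$. Two multisets of different sizes cannot be equal, so whenever $G$ has at least one edge the multiset hypothesis fails. The implication is then vacuously true, and nothing about near $\alpha$-valuations needs to be verified.

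The only remaining case, and the one point needing care, is $|E(G)|=0$. Then $n=-1$ and both sides of the equation are empty, so the hypothesis holds. However, our graphs have no isolated vertices, so $V(G)=\emptyset$. Here $p$ is the empty map, which is vacuously a $\beta$-valuation of the $0$-edge graph. Taking $V_{small}=V_{large}=\emptyset$, the near $\alpha$-conditions hold trivially, so the conclusion is also true in this case.

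The argument above does not use the hypothesis $p(u)<p(v)$ at all. It also does not show that $p$ is a $\beta$-valuation in any non-degenerate case.
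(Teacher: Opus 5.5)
As a proof of the statement exactly as printed, your argument is correct. The hypothesis $p(u)<p(v)$ (or the standing convention that digraphs are oriented) makes forgetting orientation a bijection $\overrightarrow{E}(G^*)\to E(G)$. So the left-hand multiset has $|E(G)|$ elements against $|E(G)|-1$ on the right, and you handle the empty-graph case correctly. The paper gives no proof of this lemma, so there is no route to compare with.

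But you are proving a typo. The intended value is $n=|E(G)|$. This matches the definition of a $\beta$-valuation and the way the lemma is invoked in Theorems \ref{pblowlarge} and \ref{pblowsmall}: there a blown-up graph with $nl$ edges has labels in $\{0,\ldots,nl\}$ and difference multiset $[1,nl]$. In that version your counting argument yields nothing, so it does not supply the fact the later theorems rely on.

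Moreover, with $n=|E(G)|$ the lemma is false as stated, so it cannot be proved without an extra hypothesis. The triangle labelled $0,1,3$ with its natural orientation has directed differences $\{1,2,3\}$, all edges increasing. Yet $C_3$ is not bipartite, so it has no near $\alpha$-valuation by Theorem \ref{thm:nearprops}(i). For a bipartite example, take $P_5$ labelled $3,0,4,2,1$ in order: the differences are $3,4,2,1$, but the vertex labelled $2$ has neighbours labelled $1$ and $4$, so it is neither a source nor a sink.

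The missing hypothesis is a partition $V(G)=V_{small}\cup V_{large}$ such that every directed edge runs from $V_{small}$ to $V_{large}$, equivalently every vertex of $G^*$ is a source or a sink. This is what the proofs citing the lemma actually check when they compare labels in $V_y$ with labels in $V_x$. With it, the proof is two lines:
\begin{itemize}
\item $p(u)<p(v)$ gives $p(v)-p(u)=|p(v)-p(u)|$, so by your edge bijection $p$ is a $\beta$-valuation of $G$;
\item Theorem \ref{thm:nearprops}(i) then makes $p$ a near $\alpha$-valuation.
\end{itemize}
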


We now establish (via a series of results) that the balanced blow-up of a graph by a factor of $l$ has a near $\alpha$-valuation if the original does.
\begin{theorem}\label{pblowlarge}
    Let $G$ be a graph with a near $\alpha$-valuation $p$, and let $l$ be a positive integer. The blow-up $G^p_{1,l}$ has an near $\alpha$-valuation, $p'$.
\end{theorem}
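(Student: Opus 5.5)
The plan is to write down an explicit labelling of $G^p_{1,l}$ and check the conditions of Lemma \ref{directtopm}. I take the bipartition of $G$ to be $V(G)=V_{small}\cup V_{large}$ coming from $p$. In $G^p_{1,l}$ each small vertex stays a single vertex, and each large vertex $v$ is replaced by $l$ copies $(v,0),\ldots,(v,l-1)$. Since $G$ has $n$ edges, $G^p_{1,l}$ has $ln$ edges, so labels should lie in $\{0,\ldots,ln\}$. I define
\[ p'(u)=p(u)\ (u\in V_{small}),\qquad p'((v,i))=p(v)+in\ (v\in V_{large},\ 0\le i\le l-1). \]

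First I orient each edge of $G^p_{1,l}$ from its small end to its large end. By Theorem \ref{thm:nearprops}(i), every edge of $G$ has the form $\{u,v\}$ with $u$ small, $v$ large and $p(u)<p(v)$. Hence the edges of $G^p_{1,l}$ are exactly the pairs $\{u,(v,i)\}$ with $\{u,v\}\in E(G)$ and $0\le i\le l-1$. The directed difference along such an edge is $p(v)-p(u)+in$. As $\{u,v\}$ ranges over $E(G)$, the values $p(v)-p(u)$ run exactly once through $\{1,\ldots,n\}$, by Lemma \ref{betatodirect}. So as $i$ also varies, the differences $d+in$ run exactly once through $\{1,\ldots,ln\}$. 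Every such difference is positive, so each edge goes from the smaller label to the larger one.

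Next I check that $p'$ is an injective map into $\{0,\ldots,ln\}$.
\begin{itemize}
\item \emph{Range.} Small labels lie in $\{0,\ldots,n\}$. Large labels satisfy $p(v)+in\le n+(l-1)n=ln$.
\item \emph{Large versus large.} Every large vertex has a smaller neighbour (there are no isolated vertices), so $1\le p(v)\le n$. Two large labels therefore differ by at most $n-1$. Hence $p(v)+in=p(w)+jn$ forces $i=j$, and then $v=w$ by injectivity of $p$.
\item \emph{Small versus large.} Every small vertex has a larger neighbour, so $p(u)\le n-1$. If $p(u)=p(v)+in$ with $i\ge 1$, then $p(u)\ge 1+n$, which is impossible. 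If $i=0$, the equation contradicts injectivity of $p$.
\end{itemize}

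Finally, Lemma \ref{directtopm} applies to this orientation and gives that $p'$ is a near $\alpha$-valuation of $G^p_{1,l}$. Alternatively, one can verify the source/sink property directly: each small $u$ has all its neighbours $(v,i)$ with labels $\ge p(v)>p(u)$, and each copy $(v,i)$ has label $\ge p(v)>p(u)$ for every neighbour $u$.

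The main obstacle is the injectivity check, specifically excluding collisions across the blocks $\{0,\ldots,n\}+in$. This is exactly where the near $\alpha$ property is used: it guarantees that $0$ is a small label and $n$ is a large label, so that large labels lie in $\{1,\ldots,n\}$ and small labels lie in $\{0,\ldots,n-1\}$.
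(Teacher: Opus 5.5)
Your proof is correct, but it uses a different labelling from the paper's.

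\textbf{The two labellings.} The paper scales rather than translates. A small vertex $x$ gets $lp(x)$, and the copies of a large vertex $y$ get the contiguous block $lp(y),lp(y)-1,\ldots,lp(y)-(l-1)$. Each original edge label $k$ is therefore refined into the interval $[l(k-1)+1,lk]$. You instead keep $p$ on the small side and stack translates $p(v)+in$, so $k$ spawns the progression $k,k+n,\ldots,k+(l-1)n$.

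\textbf{What your route buys.} The bookkeeping is transparent. You verify range and injectivity explicitly, whereas the paper leaves injectivity implicit. Your key inequality $p(v)+in\ge p(v)>p(u)$ is immediate; the scaled version needs $lp(y)-(l-1)\ge lp(x)+1$ instead.

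\textbf{What the paper's route buys.} It gives structure that is used later. Each blown-up vertex occupies a contiguous block, and the blocks are ordered as the original labels (Lemma \ref{blowdirec}). As a result, the difference multisets $\Delta(A_v,A_u)$ in the resulting EDFs come out as intervals.

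Your labelling interleaves the blocks of distinct large vertices, so Lemma \ref{blowdirec} fails for it. This is harmless: Theorem \ref{thm:nearalpha-blowup} only needs the small-below-large comparison along edges, which you have. Your translation also carries over to the modular setting of Theorem \ref{orblowlarge}, since a negative difference $k-(n+1)-in$ is congruent to $k+(l-1-i)n$ modulo $nl+1$.

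\textbf{One caution.} Keep your direct source/sink check rather than citing Lemma \ref{directtopm} alone. Edges that all increase, with the right difference multiset, do not by themselves force every vertex to be a source or a sink. For example, orient the graceful triangle labelled $0,1,3$ towards larger labels: the vertex labelled $1$ is neither, and the triangle is not even bipartite.
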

\begin{proof}
Consider the natural orientation $G^p$ of $G$.  By Lemma \ref{betatodirect}, the multiset equation $\bigcup_{(x,y)\in \overrightarrow{E}(G^p)} \{(p(y)-p(x) )\} =\{1, \ldots n\}$ holds. Consider a directed edge $(x,y) \in \overrightarrow{E}(G^p)$; then $p(y)-p(x)=k$ for some $k\in \{1,\ldots,n\}$, due to the natural orientation $y \in V_{large}$ and $x \in V_{small}$. To form $G^p_{1,l}$:
\begin{itemize}
\item We replace $y$ with the $l$-set of vertices labelled by $V_y=\bigcup_{i=0}^{l-1}\{lp(y)-i\}$.
Note that $p(y) \neq 0$ as $y \in V_{large}$, so $p(y) \geq 1$ and so $lp(y)-i \geq 1$ for all $0 \leq i \leq l-1$.
\item We replace $x$ with a singleton set of vertices labelled by $V_x = \{lp(x)\}$.
\item The directed edge $(x,y)$ is replaced with the $l$ directed edges from $V_x$ to $V_y$.  The corresponding differences for these edges are:
\[lp(y)-i-lp(x)= l(p(y)-p(x))-i\]
where $0 \leq i \leq l-1$.  It is clear that all edges in the blow-up are formed in this way. 
\end{itemize}

We claim that the given vertex labelling defines a near $\alpha$-valuation $p'$ on $G^p_{1,l}$.

Taking the multiset union over all edges in $G^p$ yields the multiset of differences corresponding to the set of all edges in $G^{p'}_{1,l}$, namely 
\[\bigcup_{(u,v) \in G^{p'}_{1,l}}\{(p'(v)-p'(u))\}=\bigcup_{(x,y) \in G^p}\bigcup_{i =0}^{l-1}\{l(p(y)-p(x))-i\}=\bigcup_{i =0}^{l-1}\bigcup_{k=1}^{n}\{lk-i\}\]
\[=\bigcup_{k=1}^{n}[l(k-1)+1,lk] = [1,nl].\]

Now note that, since $p$ is a near $\alpha$-valuation, for each $ y \in V_{large}$ and $x \in V_{small}$ we have $p(y) > p(x)$. Hence in $G^p_{l,1}$ for any $s \in V_y$ and $t \in V_x$:
\[ p'(s) \geq l p(y) > l p(x) = p'(t).\]
Hence by Lemma \ref{directtopm} this is a near $\alpha$-valuation.
\end{proof}

\begin{theorem}\label{pblowsmall}
    Let $G$ be a graph with a near $\alpha$-valuation $p$, and let $l$ be a positive integer. The blow-up $G^p_{l,1}$ has an near $\alpha$-valuation.
\end{theorem}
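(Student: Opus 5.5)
We mirror the proof of Theorem \ref{pblowlarge}, now blowing up the vertices of $V_{small}$ by a factor of $l$ and keeping each vertex of $V_{large}$ as a singleton. Work in the natural orientation $G^p$, where every directed edge $(x,y)$ has $x\in V_{small}$, $y\in V_{large}$ and $p(y)-p(x)=k\in\{1,\ldots,n\}$. By Lemma \ref{betatodirect}, each such $k$ arises from exactly one edge.

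The labelling is as follows. Replace each $x\in V_{small}$ by the $l$-set $V_x=\{lp(x)+i : 0\le i\le l-1\}$, and each $y\in V_{large}$ by the singleton $V_y=\{lp(y)\}$. A directed edge $(x,y)$ then becomes the $l$ edges from $V_x$ to $V_y$, with differences
\[
lp(y)-(lp(x)+i)=l(p(y)-p(x))-i, \qquad 0\le i\le l-1 .
\]
These are exactly the same differences as in the proof of Theorem \ref{pblowlarge}. Hence the same computation gives the multiset of differences $\bigcup_{k=1}^n [l(k-1)+1,lk]=[1,nl]$, and $nl$ is the number of edges of $G^p_{l,1}$.

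Next we check that the new labelling $p'$ is injective with values in $\{0,\ldots,nl\}$. Labels inside a block $V_x$ lie in $[lp(x),lp(x)+l-1]$, and distinct values $p(v)$ give disjoint intervals of length $l$ of the form $[lp(v),lp(v)+l-1]$ (for large vertices we use only the left endpoint). So all labels are distinct. The smallest label is at least $0$.

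The upper bound is the step needing care. A vertex $x\in V_{small}$ has a neighbour $y$ with $p(y)\ge p(x)+1$ (there are no isolated vertices). Therefore
\[
lp(x)+l-1 \le lp(y)-1 < lp(y)\le ln .
\]
The same inequality gives the separation condition: for any $t\in V_x$ and any neighbour $y$ of $x$, we have $p'(t)\le lp(x)+l-1<lp(y)=p'(V_y)$.

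With the sum condition and the orientation condition both verified, Lemma \ref{directtopm} applies and shows that $p'$ is a near $\alpha$-valuation of $G^p_{l,1}$. The main obstacle is this boundary check, namely that the shifted small labels $lp(x)+i$ never collide with, or exceed, the large labels. This is where the near $\alpha$-property $p(x)<p(y)$ for every edge is essential, since it guarantees the gap $p(y)-p(x)\ge 1$.
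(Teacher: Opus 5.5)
Your proposal is correct and follows the paper's proof: both use the labels $lp(x)+i$ on each small block and $lp(y)$ on each large singleton, obtain $[1,nl]$ as the multiset of differences, and appeal to Lemma \ref{directtopm} after checking separation. Your version is slightly more careful than the paper's. You check injectivity explicitly, and you use the correct chain $p'(t)\le lp(x)+l-1<lp(y)$, which relies on $p(y)\ge p(x)+1$ for adjacent $x,y$; the paper's displayed inequality $a(x)l\ge a'(t)$ points the wrong way.
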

\begin{proof}
Consider the natural orientation $G^p$: from Lemma \ref{betatodirect} we know $\bigcup_{(x,y)\in \overrightarrow{E}(G^p)} (p(y)-p(x) ) =\{1, \ldots n\}$. Consider a directed edge $(x,y) \in \overrightarrow{E}(G^p)$ then $p(y)-p(x)=k$ where $k\in \{1,\ldots,n\}$ by the natural orientation $y \in V_{large}$ and $x \in V_{small}$. 

\begin{itemize}
\item We replace $y$ with a singleton set of vertices labelled by $V_y=\{lp(y)\}$.
\item We replace $x$ with the $l$ set of vertices labelled by $V_x =\bigcup_{i=0}^{l-1}\{lp(x)+i\}$.
Since $x \in V_{small}$, $x \neq n$, i.e. $x \leq n-1$ and so $lp(x)+i \leq ln-1$ for $0 \leq i \leq n-1$.
\item  The directed edge $(x,y)$ is replaced with the $l$ directed edges from $V_x$ to $V_y$; these are labelled as:
\[lp(y)-(lp(x)+i)= l(p(y)-p(x))-i\]
where $0 \leq i \leq l-1.$ All edges in the blow-up are formed in this way. 
\end{itemize}

Taking the multiset union over all edges in $G^a$ will give the multiset of differences corresponding to the set of all edges in $G^p_{l,1}$, namely 
\[\bigcup_{(u,v) \in G^{p'}_{l,1}}\{(p'(v)-p'(u))\}=\bigcup_{(x,y) \in G^p}\bigcup_{i =0}^{l-1}\{l(p(y)-p(x))-i\}=\bigcup_{i =0}^{l-1}\bigcup_{k=1}^{n}\{lk-i\}\]
\[=\bigcup_{k=1}^{n}[l(k-1)+1,lk] = [1,nl].\]
Note that, since $p$ is a near $\alpha$-valuation, for each $ y \in V_{large}$ and $x \in V_{small}$ we have $p(y) >  p(x)$. Hence in $G^p_{l,1}$, for any $s \in V_y$ and $t \in V_x$:
    \[a'(s) = a(y)l >  a(x)l \geq a'(t)\]
Hence by Lemma \ref{directtopm} this is a near $\alpha$-valuation.
\end{proof}

This leads to the following theorem.

\begin{theorem}\label{blowp}
If a graph $G$ has an near $\alpha$-valuation, then its lexicographic product with $K^c_l$ has an near $\alpha$-valuation.
\end{theorem}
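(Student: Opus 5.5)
The natural route is to apply Theorems \ref{pblowsmall} and \ref{pblowlarge} in turn and then use Corollary \ref{lexiblow} to identify the result. Let $p$ be a near $\alpha$-valuation of $G$, and let $V(G)=V_{small}\cup V_{large}$ be the partition it determines. By Theorem \ref{thm:nearprops}(i) this is a bipartition of $G$, so the notation $G_{k_1,k_2}$ makes sense with $S=V_{small}$ and $T=V_{large}$.

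First apply Theorem \ref{pblowsmall} to obtain a near $\alpha$-valuation $p'$ of $G'=G^p_{l,1}$, in which each small vertex is blown up into $l$ vertices. Before the second step, we need to know how $p'$ partitions $V(G')$. From the construction, every vertex of $V_x$ with $x\in V_{small}$ has label less than $lp(y)$ for each neighbour $y$, so these vertices are small under $p'$. Each singleton $V_y=\{lp(y)\}$ with $y\in V_{large}$ is larger than all of its neighbours, so it is large. Thus the $p'$-bipartition of $G'$ is $\left(\bigcup_{x\in V_{small}}V_x\right)\cup V_{large}$.

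Next apply Theorem \ref{pblowlarge} to $G'$ with the valuation $p'$. This blows up each $p'$-large vertex, which is exactly each original large vertex, by a factor of $l$. The result is $G'^{p'}_{1,l}$ together with a near $\alpha$-valuation $p''$.

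By Corollary \ref{lexiblow}, where we first blow up $S$ by a factor of $l$ and then $T$ by a factor of $l$, the graph $G'^{p'}_{1,l}$ is exactly the balanced blow-up of $G$ by a factor of $l$. Equivalently, by Lemma \ref{lexi} (or Lemma \ref{blow-upblow-up} for the fact that the composite is a blow-up of $G$), it is $G\cdot K_l^c$. Hence $G\cdot K_l^c$ has a near $\alpha$-valuation.

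The only delicate point is checking that the bipartition induced by $p'$ matches the bipartition of $G$ that Corollary \ref{lexiblow} requires. If we blew up the wrong side in the second step, the composite would not be balanced. This check reduces to the inequality $lp(x)+i<lp(y)$ for $0\le i\le l-1$, which holds because $p(x)<p(y)$ are integers, so $lp(y)-lp(x)\ge l$.
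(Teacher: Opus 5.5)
Your proof is correct and takes the paper's approach: the paper's proof just cites Theorems \ref{pblowlarge}, \ref{pblowsmall} and Corollary \ref{lexiblow}, and you apply them in the same order (small side first, then large side). Your explicit check, via $lp(x)+i<lp(y)$, that the bipartition induced by $p'$ on $G^p_{l,1}$ is the blown-up small side together with the original large vertices is a step the paper leaves implicit, and you carry it out correctly.
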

\begin{proof}
    This follows from Theorem \ref{pblowlarge}, Theorem \ref{pblowsmall} and Corollary \ref{lexiblow}.
\end{proof}

We prove the following which will be important in applying the blow-up construction.

\begin{lemma}\label{blowdirec}
Let $A_i$ be the set of vertex labels in $G \cdot K^c_l$ corresponding to the vertex labelled $i$ in $G$. If $i<j$ then all elements of $A_i$ are smaller than all elements of $A_j$.
\end{lemma}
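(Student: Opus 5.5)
We work with the explicit labelling of $G \cdot K^c_l$ produced in the proof of Theorem \ref{blowp}. By Corollary \ref{lexiblow}, it arises from the near $\alpha$-valuation $p$ of $G$ in two steps. First we apply Theorem \ref{pblowlarge}, which gives $G^p_{1,l}$ with labelling $p'$. Then we apply Theorem \ref{pblowsmall} to $G^p_{1,l}$ equipped with $p'$, which gives the final labelling $p''$. The plan is to write down $A_i$ explicitly as an interval of integers, and then compare the endpoints of these intervals.

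\textbf{Step 1: the first blow-up.} Let $v$ be the vertex with $p(v)=i$. If $v \in V_{large}$, its image under the first blow-up is the set $\{li-l+1,\ldots,li\}$. If $v \in V_{small}$, its image is the singleton $\{li\}$.

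\textbf{Step 2: the second blow-up.} In the second blow-up only small vertices are expanded, by a factor of $l$, and all labels are multiplied by $l$.
\begin{itemize}
\item If $v \in V_{large}$, each vertex in its image is large (large vertices stay large under the blow-up), so it is sent to $l\cdot(\text{label})$. This gives $A_i=\{l(li-t): 0\le t\le l-1\}$, which lies in the interval $[l^2(i-1)+l,\; l^2 i]$.
\item If $v \in V_{small}$, the singleton $\{li\}$ is expanded to $A_i=\{l^2 i + t : 0\le t \le l-1\}$, which lies in $[l^2 i,\; l^2 i + l-1]$.
\end{itemize}
In both cases we therefore have
\[ l^2 i - l^2 + l \;\le\; \min A_i \;\le\; \max A_i \;\le\; l^2 i + l - 1. \]

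\textbf{Step 3: comparing blocks.} Take $i<j$. If $j \ge i+2$, the bounds give
\[ \max A_i \le l^2 i + l - 1 < l^2(i+1) + l \le \min A_j, \]
so we are done. When $j=i+1$ the crude bounds overlap, and this is the main obstacle. To handle it, split into the four cases according to whether the vertices labelled $i$ and $i+1$ are small or large.
\begin{itemize}
\item Small $i$, small $i+1$: $\max A_i = l^2 i + l - 1 < l^2(i+1) = \min A_{i+1}$.
\item Large $i$, large $i+1$: $\max A_i = l^2 i < l^2 i + l = \min A_{i+1}$.
\item Large $i$, small $i+1$: $\max A_i = l^2 i < l^2(i+1) = \min A_{i+1}$.
\item Small $i$, large $i+1$: we need $l^2 i + l - 1 < l^2 i + l$, which holds.
\end{itemize}
Each case is a one-line inequality, so the case split closes the argument.

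It remains to check the bookkeeping: for small vertices in Theorem \ref{pblowsmall} the offsets $+t$ are added to $l\cdot p'$, and for large vertices in Theorem \ref{pblowlarge} the offsets $-t$ are subtracted from $l\cdot p$. These are exactly the formulas used above. In the degenerate case $l=1$ all the sets $A_i$ are singletons and the statement is trivial.
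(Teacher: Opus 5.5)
Your proof is correct and follows essentially the same route as the paper. Both compute the blown-up label sets explicitly from Theorems \ref{pblowlarge} and \ref{pblowsmall} and compare $\max A_i$ with $\min A_j$ according to which side of the bipartition each vertex lies on. You compose the blow-ups large-first, as in the paper's worked example, whereas the paper's mixed case composes them small-first; both orders work. Your uniform bounds for $j\ge i+2$ together with the four-case check at $j=i+1$ are slightly more complete than the paper's argument. The paper's three-case split omits the case where $i$ labels a vertex of $V_{large}$ and $j$ a vertex of $V_{small}$. That case genuinely occurs for non-adjacent vertices, e.g.\ labels $3$ and $4$ in the near $\alpha$-valuation of $S_{3,2}$ in Theorem \ref{thm:nearprops}.
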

\begin{proof}
From Theorem \ref{pblowlarge} and Theorem \ref{pblowsmall}, for any two distinct vertices $i, j \in G$ where $i \leq  j-1$, there are 3 cases:
\begin{itemize}
\item[(1)] $i,j \in V_{small}$;
\item[(2)] $i,j \in V_{large}$;
\item[(3)] $i \in V_{small} \text{ and } j \in V_{large}$
\end{itemize}
For the first two cases, we need only consider the blow-up of the part of the bipartition which contains both $i$ and $j$, since blowing up the other part only multiplies both labels in a way which preserves the order.  In Case 1, the maximum element in $A_i$ is given by $il+l-1$ and the minimum element in $A_j$ is given by $jl$; the result follows since $il+l-1 \leq (j-1)l+l-1 = jl-1 < jl$.  In Case 2, the maximum element in $A_i$ is $il$ and the minimum element in $A_j$ is $jl-(l-1)=j(l-1)+1$, so the result follows as $il \leq (j-1)l < (j-1)l+1$.  In Case 3 we need to consider the full blow-up of both parts.  We will consider first blowing up $V_{small}$, then $V_{large}$. After blowing up $V_{small}$, the maximum label in $A_i$ is $il+l-1$, and the label of $j$ has become $jl$.  Next, after blowing up $V_{large}$ the maximum label in $A_i$ is $l(il+l-1)=il^2+l^2-l$, and the minimum label of $A_j$ is $jl^2-(l-1)$.  We can then observe that: $il^2+l^2-l \leq (j-1)l^2+l^2-1 \leq jl^2-l < jl^2 -(l-1)$.  So all elements of $A_i$ are smaller than all elements of $A_j.$
\end{proof}

\begin{remark}\label{cordir}
Lemma \ref{blowdirec} shows that the natural orientation of any edge between vertices in $G$ and any edge between vertices in the corresponding sets in $G \cdot K_{l}^C$ is the same. 
\end{remark}

\begin{theorem}\label{thm:nearalpha-blowup}
Let a graph $G=(V,E)$ have an near $\alpha$-valuation $p$. Then there exists an $(|E|l^2 + 1, |V|, l, 1; G^p)$-EDF in $\mathbb{Z}_{|E|l^2+1}$.
\end{theorem}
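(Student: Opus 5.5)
The plan is to apply Theorem \ref{blowp} to $G$ and $p$, which gives a near $\alpha$-valuation $p'$ on the lexicographic product $G \cdot K^c_l$. Then we take the sets $A_v$ to be the blown-up vertex classes, and use Lemma \ref{blowdirec} (via Remark \ref{cordir}) to check that the digraph structure is the one prescribed by $G^p$.

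\textbf{Step 1: count edges.} First we check that $G \cdot K^c_l$ has exactly $|E|l^2$ edges. By Lemma \ref{lexi} it is the balanced blow-up of $G$ by a factor of $l$. Each edge $\{x,y\}\in E$ is therefore replaced by all $l\cdot l$ edges between $V_x$ and $V_y$, and no other edges arise. Hence the near $\alpha$-valuation $p'$ produced by Theorem \ref{blowp} (built as $G_{l,1}$ followed by $G_{1,l}$, as in Corollary \ref{lexiblow}) is a $\beta$-valuation onto edge labels $\{1,\ldots,|E|l^2\}$. Its vertex labels are distinct elements of $\{0,\ldots,|E|l^2\}$, which we identify with $\mathbb{Z}_{|E|l^2+1}$.

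\textbf{Step 2: define the sets.} For each $v\in V(G)$, let $A_v=\{p'(w): w\in V_v\}\subseteq \mathbb{Z}_{|E|l^2+1}$. These are $l$-sets because $p'$ is injective, and they are pairwise disjoint for the same reason. There are $|V|$ of them, indexed by the vertices of $G^p$.

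\textbf{Step 3: compute the differences.} For each directed edge $(x,y)\in \E(G^p)$ we have $p(x)<p(y)$. By Lemma \ref{blowdirec}, every element of $A_x$ is smaller than every element of $A_y$, so each edge of the blow-up between $V_x$ and $V_y$ is oriented from $V_x$ to $V_y$ in the natural orientation of $p'$. Consequently
$$\bigcup_{(x,y)\in\E(G^p)}\Delta(A_y,A_x)=\bigcup_{(x,y)\in \E(G^p)}\ \bigcup_{s\in V_x,\,t\in V_y}\{p'(t)-p'(s)\}=\bigcup_{(u,v)\in \E((G\cdot K^c_l)^{p'})}\{p'(v)-p'(u)\}.$$
By Lemma \ref{betatodirect} the right-hand side equals $\{1,\ldots,|E|l^2\}$, which is $\mathbb{Z}_{|E|l^2+1}\setminus\{0\}$ with $\lambda=1$. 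Here the integer differences are positive and less than the modulus, so no reduction modulo $|E|l^2+1$ alters them. This gives the required $(|E|l^2+1,|V|,l,1;G^p)$-EDF.

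\textbf{Main obstacle.} The delicate point is the orientation consistency in Step 3. We must make sure that the blow-up sets, constructed in two stages with the rescalings $lp(\cdot)\pm i$, keep the order of the original labels. For vertices lying in the same part of the bipartition, Lemma \ref{blowdirec} gives this directly. For an edge from $V_{small}$ to $V_{large}$, we additionally rely on the inequality argument in the proofs of Theorems \ref{pblowlarge} and \ref{pblowsmall}, namely that all labels in a large class exceed all labels in an adjacent small class. Once that holds, the rest is bookkeeping of multisets.
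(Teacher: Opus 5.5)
Your proposal is correct and follows essentially the same route as the paper: apply Theorem~\ref{blowp} to get $p'$ on $G\cdot K^c_l$, take $A_v=\{p'(w):w\in V_v\}$, and use Lemma~\ref{blowdirec} (equivalently Remark~\ref{cordir}) to see that every blow-up edge between $V_x$ and $V_y$ inherits the orientation of $(x,y)$ in $G^p$. The only minor point is that edges of $G$ always join $V_{small}$ to $V_{large}$, so only the small-to-large case of Lemma~\ref{blowdirec} is ever used, and your aside about pairs in the same part of the bipartition is unnecessary, though harmless.
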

\begin{proof}
To obtain the EDF, we consider the graph $H=G \cdot K_{l}^C$ (viewed as a blow-up of $G$).
As $G$ has a near $\alpha$-valuation, by Theorem \ref{blowp}, so does $H$.  Call this near $\alpha$-valuation $p'$, where by Remark \ref{cordir} we know that the orientation of any edge between a pair of vertices $u$ and $v$ of $G$, and any edge in $H$ between vertices in $V_u$ and $V_v$, is the same. We have 
$$\bigcup_{(x,y)\in \overrightarrow{E}(H^{p'})} \{p'(y)-p'(x)\} =\{1, \ldots nl^2\}.$$ 
Recall from the proof of Theorem \ref{pblowlarge} and Theorem \ref{pblowsmall} that each edge $\{u,v\} \in E(G)$ corresponds to the $l^2$ edges from $V_u$ to $V_v$ in $H$, and this accounts for all edges of $H$. 

For each $u \in G$, define the $l$-subset $A_u$ of $\mathbb{Z}_{nl^2+1}$ by $\{p'(x): x \in V_u\}$. Then:
    \[\bigcup_{(u,v)\in \overrightarrow{E}(G^p)} \Delta(A_v,A_u ) = \bigcup_{(x,y)\in \overrightarrow{E}(H^{p'})} \{p'(y)-p'(x)\}=\{1, \ldots nl^2\}=\mathbb{Z}_{nl^2+1}\backslash\{0\}.\]
\end{proof}

\begin{example}
Consider the following graph $G$ with $5$ vertices $\{v_1,v_2,v_3,v_4,v_5\}$ and $5$ edges $\{\{v_1,v_2\}, \{v_2,v_3\}, \{v_3,v_4\}, \{v_1,v_4\}, \{v_1,v_5\}$. It has a near $\alpha$-valuation $p$ - which is in fact an $\alpha$-valuation - given by $p(v_1)=0,p(v_2)=3, p(v_3)=2, p(v_4)=4$ and $p(v_5)=5$, as shown below.  
\begin{center}  
    \begin{tikzpicture}[baseline=4mm, scale=0.6]
    \begin{scope}[radius=1mm]
    \fill(0,2) circle;
    \fill(0,-1) circle; 
    \fill(6,2) circle; 
    \fill(3,-1) circle;
    \fill(3,2) circle;
    \node at (1.5,2.5) {4};
    \node at (-0.5,2.5) {\textbf{4}};
    \node at (-0.5,-1.5) {\textbf{2}};
    \node at (1.5,-1.5) {1};
    \node at (-0.5,0.5) {2};
    \node at (3,2.5) {\textbf{0}};
    \node at (6,2.5) {\textbf{5}};
    \node at (4.5,2.5) {5};
    \node at (3.5,0.5) {3};
    \node at (3.5,-1.5) {\textbf{3}};
    \end{scope}
    \begin{scope}[thick]
    \path [draw=black,postaction={on each segment={mid arrow=black}}](3,2)--(6,2);
    \path [draw=black,postaction={on each segment={mid arrow=black}}](3,2)--(0,2);
    \path [draw=black,postaction={on each segment={mid arrow=black}}](3,2)--(3,-1);
    \path [draw=black,postaction={on each segment={mid arrow=black}}](0,-1)--(3,-1);
    \path [draw=black,postaction={on each segment={mid arrow=black}}](0,-1)--(0,2);
    \end{scope}
    \end{tikzpicture}
\end{center}
Using Theorems \ref{pblowlarge} and \ref{pblowsmall}, we can blow this up with $l=3$ to give the sets:
\[V_{v_1}=\{0,1,2\}, V_{v_2}=\{21,24,27\}, V_{v_3}=\{18,19,20\},V_{v_4}=\{30,33,36\},V_{v_5}=\{39,42,45\}.\]
Here $|E|=5$, $l=3$ and so $|E|l^2+1=46$.  Viewing these as subsets of the group  $\mathbb{Z}_{46}$, the differences between the elements in the sets of the blow-up, with orientation given by the natural orientation of $ a$, yield all non-zero group elements precisely once: $\Delta(V_{v_2},V_{v_1})=[19,27],\Delta(V_{v_4},V_{v_1})=[28,36],
\Delta(V_{v_5},V_{v_1})=[37,45],\Delta(V_{v_2},V_{v_3})=[1,9]$ and $\Delta(V_{v_4},V_{v_3})=[10,18]$.
Hence these sets form a $(46,5,3,1,G^a)$-EDF in $\mathbb{Z}_{46}$.
\end{example}

\begin{remark}
By Remark \ref{20nearalphatree}, for all trees $T$ with at most 20 vertices there exists a $(|E|l^2 + 1, |V|, l, 1; T^p)$-EDF in $\mathbb{Z}_{|E|l^2+1}$. Conjecture \ref{nearalphatree}, if true, would guarantee that for all trees $T$ there exists a $(|E|l^2 + 1, |V|, l, 1; T^p)$-EDF in $\mathbb{Z}_{|E|l^2+1}$.
\end{remark}

\section{Oriented $\beta$- and near $\alpha$-valuations}

We have considered graphs that possess $\beta$-valuations and near $\alpha$-valuations. However, in cases where no such valuation exists, it can be possible to obtain a vertex-labelling such that, when the edges are suitably directed, the resulting edge-labels gives each non-zero element exactly once (working with modular arithmetic). Directed graphs with such labellings are studied by Bloom and Hsu in \cite{BloHsu} (where they are called \emph{graceful digraphs}).  We introduce the following notation for the valuation itself.  All of our digraphs will be oriented graphs. We identify $\mathbb{Z}_{n+1}$ with $\{0,\ldots,n\}$
in the natural way, with ordering $0<1< \cdots<n$.

\begin{definition}
Let $G$ be an digraph with $n$ edges and let $b$ be a one-to-one mapping from $V(G)$ to $\{0,1,\ldots,n\}$. We will call $b$ of $G$ an \emph{oriented $\beta$-valuation} if the following multiset equation holds:
    \[\bigcup_{(u,v) \in \overrightarrow{E}(G)} \{ b(v)-b(u) \mod n+1\} = \mathbb{Z}_{n+1}\backslash\{0\}\]
\end{definition}

\begin{example}\label{orbeta}
    Consider the digraph $G$ with the labelling given below.  The digraph has $7$ edges; working modulo $8$, we obtain an oriented $\beta$-valuation.
    \begin{center}
    \begin{tikzpicture}[baseline=4mm,scale=0.7]
    \begin{scope}[radius=1mm]
    \fill(0,2) circle;
    \fill(0,-1) circle; 
    \fill(6,-1) circle;
    \fill(6,2) circle; 
    \fill(3,-1) circle;
    \fill(3,2) circle;
    \node at (1.5,2.5) {5};
    \node at (4.5,2.5) {3};
    \node at (4.5,-1.5) {2};
    \node at (-0.5,2.5) {\textbf{5}};
    \node at (-0.5,-1.5) {\textbf{6}};
    \node at (1.5,-1.5) {-2 $\equiv 6$};
    \node at (-1,0.5) {-1 $\equiv 7$};
    \node at (3,2.5) {\textbf{0}};
    \node at (6,2.5) {\textbf{3}};
    \node at (6,-1.5) {\textbf{2}};
    \node at (3.5,0.5) {4};
    \node at (6.5,0.5) {1};
    \node at (3.5,-1.5) {\textbf{4}};
    \end{scope}
    \begin{scope}[thick]
    \path [draw=black,postaction={on each segment={mid arrow=black}}](3,2)--(6,2);
    \path [draw=black,postaction={on each segment={mid arrow=black}}](6,-1)--(3,-1);
    \path [draw=black,postaction={on each segment={mid arrow=black}}](6,-1)--(6,2);
    \path [draw=black,postaction={on each segment={mid arrow=black}}](3,2)--(0,2);
    \path [draw=black,postaction={on each segment={mid arrow=black}}](3,2)--(3,-1);  
    \path [draw=black,postaction={on each segment={mid arrow=black}}](0,-1)--(3,-1);
    \path [draw=black,postaction={on each segment={mid arrow=black}}] (0,-1)--(0,2);
    \end{scope}
    \end{tikzpicture}
    \end{center}
    Note that this vertex-labelling is not a $\beta$-valuation for the corresponding undirected graph, as $1$ and $2$ both would appear twice and $6$ and $7$ would not appear at all.
\end{example}

In \cite{BloHsu} Bloom and Hsu note that, if a graph $G$ has a $\beta$-valuation $b$, then there exists an orientation of $G$ which has an oriented $\beta$-valuation, namely $G^{b}$. However, if $G$ is an digraph with an oriented $\beta$-valuation, then it is not guaranteed that the corresponding undirected graph will have a $\beta$-valuation. 

Oriented versions can be useful when no appropriate labelling of the underlying graph exists. For example, we have seen that no $\beta$-valuation exists for the undirected graph $C_m$ when $m \equiv 2 \mod 4$.

\begin{proposition}\label{thm:mcycle2mod4}
Let $m \equiv 2 \mod 4$. Consider the graph $C_m$ with $V(C_m)=\{v_1,\ldots,v_m\}$ and $E(C_m)=\{\{v_1,v_2\}, \ldots, \{v_m,v_1\}\}$. Then the vertex-labelling
\[a(v_i) = \begin{cases} 
          \frac{i-1}{2} & i \textit{ odd} \\
          m +1- \frac{i}{2} & i \textit{ even, }i \leq \frac{m}{2} \\
          m - \frac{i}{2} & i \textit{ even, }i > \frac{m}{2} \\
       \end{cases}
\]
is an oriented $\beta$-valuation for the digraph $C_m$ with orientation corresponding to the natural orientation from $a$, but with the orientation of $(v_m,v_1)$ reversed.
\end{proposition}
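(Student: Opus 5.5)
The plan is a direct computation. I will list the vertex labels, check injectivity into $\{0,\ldots,m\}$, compute the label difference along each of the $m$ edges, and show that the resulting multiset modulo $m+1$ is exactly $\mathbb{Z}_{m+1}\setminus\{0\}$. Throughout, write $m=4k+2$, so $m/2=2k+1$ is odd. This parity is exactly what distinguishes the situation from Theorem \ref{cyc}, and it is why one edge must be reversed.

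\textbf{Step 1: labels and injectivity.} For odd $i$ the labels are $0,1,\ldots,2k$. For even $i\le 2k$ (note $m/2$ is odd, so the even indices with $i\le m/2$ are exactly $2,4,\ldots,2k$) the labels run through $m,m-1,\ldots,m+1-k$. For even $i\ge 2k+2$ the labels run through $m-k-1,\ldots,2k+1$. These three ranges are pairwise disjoint and lie in $\{0,\ldots,m\}$, with only $m-k$ unused. So $a$ is one-to-one. Every edge $\{v_j,v_{j+1}\}$ joins an odd-indexed vertex (label $\le 2k$) to an even-indexed vertex (label $\ge 2k+1$). Hence the natural orientation always points from the odd-indexed vertex to the even-indexed one, and each edge difference is the even-indexed label minus the odd-indexed label.

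\textbf{Step 2: edge differences.} I will split the edges $\{v_j,v_{j+1}\}$, $1\le j\le m-1$, into three groups according to which formula applies to the even endpoint.
\begin{itemize}
\item For $1\le j\le 2k$, the even endpoint has index $\le 2k$, and a short check of both parities of $j$ gives difference $m+1-j$. These edges contribute $\{2k+3,\ldots,m\}$.
\item The edge $\{v_{2k+1},v_{2k+2}\}$ gives $(m-k-1)-k=2k+1$.
\item For $2k+2\le j\le m-1$, again checking both parities, the difference is $m-j$. These edges contribute $\{1,\ldots,2k\}$.
\end{itemize}
Together these $m-1$ edges produce each element of $\{1,\ldots,m\}\setminus\{2k+2\}$ exactly once.

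\textbf{Step 3: the closing edge.} The edge $\{v_m,v_1\}$ has $a(v_m)=m-m/2=2k+1$ and $a(v_1)=0$. Under the natural orientation, $(v_1,v_m)$, its difference would be $2k+1$, which duplicates a value from Step 2. This is why $C_m$ fails to be graceful in the naive way. With the orientation reversed to $(v_m,v_1)$, the difference is $0-(2k+1)\equiv m+1-(2k+1)=2k+2 \pmod{m+1}$, which is precisely the missing value. Hence the multiset of differences is $\mathbb{Z}_{m+1}\setminus\{0\}$, as required.

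The main obstacle is the bookkeeping in Step 2: getting the boundary index $i=m/2$ right (it is odd, so it falls under the odd case) and handling the transition edge $\{v_{2k+1},v_{2k+2}\}$ correctly. I would verify these on $m=6$ and $m=10$ before writing the general argument.
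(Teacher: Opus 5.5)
Your proposal is correct and follows essentially the same route as the paper: compute the edge differences under the natural orientation, observe that $\frac{m}{2}$ occurs twice (from the edge $\{v_{m/2},v_{m/2+1}\}$ and from the closing edge) while $\frac{m}{2}+1$ is missing, then reverse the closing edge to get $-\frac{m}{2}\equiv \frac{m}{2}+1 \pmod{m+1}$. The differences are only in bookkeeping: you group the edges into two consecutive runs plus the transition edge instead of the paper's four step-$2$ progressions, and you explicitly check injectivity and that the natural orientation runs from odd- to even-indexed vertices, which the paper leaves implicit.
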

\begin{proof}
Consider the edge-labels arising from $a$ on $C_m$:
\begin{itemize}
\item[(1)] $\bigcup_{\substack{1 \leq i \leq \frac{m}{2}-2 \\i \text{ odd }}}\{a(v_{i+1})-a(v_{i}) \} = \bigcup_{\substack{1 \leq i \leq \frac{m}{2}-2 \\i \text{ odd }}}\{m+1-i\}=\{\frac{m}{2}+3, \frac{m}{2}+5, \ldots, m-2,m\}$.
\item[(2)] $\bigcup_{\substack{3 \leq i \leq \frac{m}{2} \\i \text{ odd }}}\{a(v_{i-1})-a(v_i) \} = \bigcup_{\substack{3 \leq i \leq \frac{m}{2} \\i \text{ odd }}}=\{\frac{m}{2}+2,\frac{m}{2}+4,\ldots, m-3,m-1\}$.
\item[(3)] $\bigcup_{\substack{\frac{m}{2} \leq i \leq m-1 \\i \text{ odd }}}\{a(v_{i+1})-a(v_i)\} = \bigcup_{\substack{\frac{m}{2} \leq i \leq m-1 \\i \text{ odd }}}=\{1,3, \ldots, \frac{m}{2}-2, \frac{m}{2}\}$.
\item[(4)] $\bigcup_{\substack{\frac{m}{2}+2 \leq i \leq m-1 \\i \text{ odd }}}\{a(v_{i-1})-a(v_i) \} = \bigcup_{\substack{\frac{m}{2}+2 \leq i \leq m-1 \\i \text{ odd }}}=\{2,4, \ldots, \frac{m}{2}-3,\frac{m}{2}-1\}$.
\end{itemize}  
Finally we also have: $a(v_m)-a(v_1) = \frac{m}{2}-0 = \frac{m}{2}$.
So we obtain every element in $[1,m]$ exactly once except for $\frac{m}{2}$, which we obtain twice, and $\frac{m}{2}+1$, which we do not obtain. These are precisely the edge-labels arising from the oriented version of this vertex-labelling on the digraph $C_m$ with the natural orientation given by $a$.

Now consider the new orientation obtained from the natural one by reversing the direction of $(v_m,v_1)$.  The vertex labels are unchanged, and the edge labels are unchanged with the exception of the final label, which becomes:
$a(v_1)-a(v_m) = 0-\frac{m}{2} \equiv \frac{m}{2}+1 (\mod m+1)$ . This yields every element in $\mathbb{Z}_{m+1}\backslash\{0\}$ as an edge-label exactly once, as required.
\end{proof}
Note that this vertex-labelling is the same one used in Theorem \ref{cyc} to obtain an $\alpha$-valuation for $C_m$ with $m \equiv 0 \mod 4$.  It was also used in the construction of an $m$-set CEDF with $m \equiv 2 \mod 4$ in \cite{PatStiCEDF}, but here the orientation of the cycle is not the same.

We can generalise the idea from the above proof to any digraph with the necessary conditions. For a set $A$, we define the set $-A=\{-a: a\in A\}$. 

\begin{lemma}\label{selfflip}
    Let $G$ be a digraph and $b$ a vertex labelling of $G$.  If the multiset equation
    \[\bigcup_{(u,v) \in \overrightarrow{E}(G)} \{ b(v)-b(u) \mod n+1\} = \mathbb{Z}_{n+1}\backslash(\{0\}\cup -B) \cup B\] 
    is satisfied for some set $B \subset \mathbb{Z}_{n+1}\backslash\{0\}$, then there exists a digraph $G'$ such that $b$ is an oriented $\beta$-valuation of $G'$. 
\end{lemma}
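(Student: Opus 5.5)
The plan is to reverse exactly the edges whose labels lie in $B$, generalising the final step of the proof of Proposition \ref{thm:mcycle2mod4}. First I read the hypothesis correctly. The multiset of edge labels is $(\mathbb{Z}_{n+1}\setminus(\{0\}\cup -B))\cup B$, so every element of $B$ appears as an edge label. The intended reading is that each element of $B$ occurs exactly once more than it would in $\mathbb{Z}_{n+1}\setminus\{0\}$, and that the elements of $-B$ are missing. Since $|B|=|-B|$, the total count equals $n$, the number of edges.

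If $B\cap -B\neq\emptyset$, some care with multiplicities is needed. I would first handle the clean case: assume $B\cap(-B)=\emptyset$. Then the elements of $B$ each occur twice, the elements of $-B$ occur zero times, and all other nonzero elements occur once. (If an element is in both, the multiset count works out to one, which is consistent. I would simply note that $b'$ below can then be taken with $B$ replaced by $B\setminus(-B)$.)

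The construction is as follows. For each $\beta\in B$, the label $\beta$ arises from exactly two directed edges $(u,v)$ with $b(v)-b(u)\equiv\beta \pmod{n+1}$. Choose one of them and reverse its orientation to $(v,u)$. Let $G'$ be the digraph obtained by doing this for every $\beta\in B$, using distinct edges throughout; they are automatically distinct, since their labels differ.

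Next I check that $G'$ is still an oriented graph. Reversing $(u,v)$ could create a 2-cycle only if $(v,u)$ were already an edge. We may assume $G$ is oriented, as it is throughout the paper, so $(v,u)$ is not an edge; and two reversed edges cannot coincide with each other in this way, since their underlying pairs are distinct.

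Finally I compute the label multiset of $G'$. Each reversed edge now contributes $b(u)-b(v)\equiv -\beta$ instead of $\beta$. Hence the new multiset is the old one with one copy of each $\beta\in B$ removed and one copy of each $-\beta$ added, namely
\[\Bigl(\bigl(\mathbb{Z}_{n+1}\setminus(\{0\}\cup -B)\bigr)\cup B\Bigr)\setminus B\;\cup\;(-B)=\mathbb{Z}_{n+1}\setminus\{0\}.\]
The vertex map $b$ is unchanged and remains injective into $\{0,\ldots,n\}$, so $b$ is an oriented $\beta$-valuation of $G'$.

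The only real obstacle is the multiset bookkeeping: making precise that each element of $B$ appears at least twice, so that one copy can be sacrificed, and the degenerate overlap $B\cap -B$. I would handle the overlap by reducing $B$ as described above; in particular, the case $2\beta\equiv 0$ cannot occur when $\beta\neq 0$ and $n+1$ is odd, and in general it is harmless.
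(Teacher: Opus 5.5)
Your proof is correct and takes the same approach as the paper: for each $\beta\in B$, reverse one of the two edges labelled $\beta$ so that its label becomes $-\beta$. Your reduction to $B\setminus(-B)$ (to deal with elements lying in $B\cap -B$) and your check that $G'$ stays oriented supply details that the paper's one-line proof leaves implicit.
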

\begin{proof}
For each $b \in B$, reversing the orientation of one of the two edges with edge-label $b$ yields the edge-label $-b$.
\end{proof}

The following is a new construction for an oriented $\beta$-valuation using cyclotomy.
\begin{proposition}\label{prop:orientedcyclotomic}
Let $p$ be prime and consider the rooted tree $S_{(p-1)/2,2}$.  Define its vertex-set as follows: the root is $r$, $\{u_0,\ldots,u_{(p-1)/2-1}\}$ are the set of children of $r$, and $v_i$ is the child of $u_i$ ($0 \leq i \leq (p-1)/2-1\}$.   Let $\alpha$ be a primitive element of $GF(p)$.  Denote by $S$ the set of nonzero squares and by $N$ the set of nonsquares in $GF(p)$. Then
\begin{itemize}
\item[(i)] if $\alpha-1 \in S$, the vertex-labelling $b$ given by: $b(r)=0$, $b(u_i)=\alpha^{2i+1}$ and $b(v_i)=\alpha^{2i}$ is an oriented $\beta$-valuation for the oriented $S_{(p-1)/2,2}$ with directed edges $\{(r,u_i): 0 \leq i \leq (p-1)/2-1\} \cup \{(v_i,u_i): 0 \leq i \leq (p-1)/2\}$.
\item[(ii)] if $\alpha-1 \in N$, the vertex-labelling $b$ given by: $b(r)=0$, $b(u_i)=\alpha^{2i}$ and $b(v_i)=\alpha^{2i+1}$ is an oriented $\beta$-valuation for the oriented $S_{(p-1)/2,2}$ with directed edges $\{(r,u_i): 0 \leq i \leq (p-1)/2-1\} \cup \{(u_i,v_i): 0 \leq i \leq (p-1)/2\}$.
\end{itemize}
\end{proposition}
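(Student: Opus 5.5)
The plan is to compute the multiset of edge-labels modulo $p$ directly, using the cyclotomic structure of $GF(p)^*$. The tree $S_{(p-1)/2,2}$ has $p$ vertices and $p-1$ edges, so the labels must be $p$ distinct elements of $\mathbb{Z}_p$. This holds because $b(r)=0$ and $\{\alpha^{j}: 0\le j\le p-2\}$ runs over $GF(p)^*$ exactly once. With $i$ ranging over $0\le i\le (p-1)/2-1$, the exponents $2i+1$ and $2i$ cover all odd and all even exponents respectively, so $b$ is injective into $\{0,\ldots,p-1\}$.

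Next, split the edges into the two types and compute their labels.

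\emph{Root edges.} The edges $(r,u_i)$ contribute $b(u_i)-0=b(u_i)$.
\begin{itemize}
\item In case (i) these labels are $\{\alpha^{2i+1}\}=N$.
\item In case (ii) they are $\{\alpha^{2i}\}=S$.
\end{itemize}

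\emph{Pendant edges.} Here the label factorises.
\begin{itemize}
\item In case (i) the edge $(v_i,u_i)$ gives $\alpha^{2i+1}-\alpha^{2i}=\alpha^{2i}(\alpha-1)$.
\item In case (ii) the edge $(u_i,v_i)$ gives $\alpha^{2i+1}-\alpha^{2i}=\alpha^{2i}(\alpha-1)$ as well.
\end{itemize}
In both cases the pendant labels form the coset $(\alpha-1)S$. Note $\alpha\neq 1$, since $\alpha$ is primitive (assume $p$ odd so that $S$ is a proper subgroup).

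Now use the fact that $S$ is the index-2 subgroup of $GF(p)^*$ with $N=\alpha S$. If $\alpha-1\in S$ then $(\alpha-1)S=S$, and if $\alpha-1\in N$ then $(\alpha-1)S=N$. Since $i\mapsto\alpha^{2i}$ is a bijection onto $S$ over the given range, each coset is hit exactly once. Hence in case (i) the labels are $N\cup S$, and in case (ii) they are $S\cup N$. In both cases every element of $\mathbb{Z}_p\setminus\{0\}$ occurs exactly once, which is precisely the oriented $\beta$-valuation condition (with $n+1=p$).

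The only real point is the coset identification $(\alpha-1)S\in\{S,N\}$ and the bijectivity of $i\mapsto\alpha^{2i}$ onto $S$. I would also note the typographical range $0\le i\le (p-1)/2$ in the edge set, reading it as $(p-1)/2-1$ to match the vertex indices.
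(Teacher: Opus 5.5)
Your proof is correct and complete. The root edges contribute one cyclotomic class of order $2$ ($N$ in case (i), $S$ in case (ii)). The pendant edges contribute the coset $(\alpha-1)S$, and the hypothesis on $\alpha-1$ makes the two sets complementary in $GF(p)^*$. You are also right that $p$ must be odd, and that the edge range $0\le i\le (p-1)/2$ in the statement should read $0\le i\le (p-1)/2-1$.

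The paper states this proposition without proof. Your direct computation is the natural verification, and it follows the same cyclotomic reasoning the paper uses for Construction \ref{constr:near alpha}, where $GF(p)^*=S\cup 2S$.
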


\begin{remark}
If no modular arithmetic is used in the edge-label differences in case (i) of Proposition \ref{prop:orientedcyclotomic}, then this is a (standard) near $\alpha$-valuation for the graph $S_{(p-1)/2,2}$; for $p=7$ and $\alpha=3$, this is the near $\alpha$-valuation given Theorem \ref{thm:nearprops} for the Rosa star.
\end{remark}

In \cite{BloHsu}, digraphs with known oriented $\beta$-valuations are used to obtain new digraphs and oriented $\beta$-valuations; we briefly state some results here.

\begin{lemma}\label{flip}
    Let $G$ be a digraph with an oriented $\beta$-valuation $b$. Let $G'$ be the digraph given by reversing the orientation of edges labelled by $e$ and $-e \mod n+1$. Then $b$ is also an oriented $\beta$-valuation for $G'$
\end{lemma}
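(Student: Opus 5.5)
The argument works directly with the multiset of oriented edge-labels. Reversing an edge changes only its own label, so the multiset for $G'$ is the one for $G$ with the labels of the reversed edges negated.

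Let $G$ have $n$ edges, so all labels lie in $\mathbb{Z}_{n+1}$. Since $b$ is an oriented $\beta$-valuation of $G$, we have
$$\bigcup_{(u,v)\in \overrightarrow{E}(G)}\{b(v)-b(u)\}=\mathbb{Z}_{n+1}\setminus\{0\},$$
and each nonzero element occurs exactly once. Hence for a given $e\neq 0$ there is exactly one edge $(u_1,v_1)$ with label $e$. If $-e\neq e$ there is also exactly one edge $(u_2,v_2)$ with label $-e$; if $-e=e$ (when $n+1$ is even and $e=(n+1)/2$), there is just the single edge $(u_1,v_1)$. So $G'$ is obtained from $G$ by reversing either these two edges or this one edge.

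Reversing $(u_1,v_1)$ to $(v_1,u_1)$ changes its label from $b(v_1)-b(u_1)=e$ to $b(u_1)-b(v_1)=-e \pmod{n+1}$. Likewise, reversing $(u_2,v_2)$ changes its label from $-e$ to $e$. Every other edge keeps both its orientation and its label, because $b$ itself is unchanged.

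It remains to check the multiset. In the generic case the two changed labels are simply swapped, so the multiset union of labels for $G'$ equals that for $G$, which is $\mathbb{Z}_{n+1}\setminus\{0\}$. In the degenerate case $e=-e$, the single reversed edge keeps the label $e$, so the multiset is again unchanged. We also need $G'$ to be an oriented graph: reversing an edge $(u,v)$ could only create both $(u,v)$ and $(v,u)$ if the edge $(v,u)$ were already present, and it is not, since $G$ is oriented. The labelling $b$ is still injective into $\{0,\ldots,n\}$, so $b$ is an oriented $\beta$-valuation of $G'$.

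There is no real obstacle here. The only points needing care are the $e=-e$ case and the phrase ``edges labelled by $e$ and $-e$'', which we read as the unique edges carrying those labels; uniqueness follows directly from the definition of an oriented $\beta$-valuation.
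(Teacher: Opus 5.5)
Your proof is correct and matches the paper's reasoning. The paper states this lemma without proof, attributing it to Bloom and Hsu, but its one-line proof of Lemma~\ref{selfflip} rests on the same observation: reversing an edge negates its label, so reversing the edges labelled $e$ and $-e$ just swaps those two labels, and your separate handling of the self-inverse case $e=(n+1)/2$ and of the orientedness of $G'$ fills in details the paper leaves implicit.
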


Applying this to each edge in a digraph $G$ creates a family $\mathcal{F}$ of up to $2^{\lfloor \frac{m}{2} \rfloor}$ distinct digraphs with the same vertex set as $G$. The following is a consequence of Lemma \ref{flip}. 

\begin{corollary}
    Let $G$ be a digraph with $n$ edges, with an oriented $\beta$-valuation $b$. Then $b$ is an oriented $\beta$-valuation for all digraphs in $\mathcal{F}$.
\end{corollary}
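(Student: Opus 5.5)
The plan is to argue by induction on the number of pairs reversed, using Lemma \ref{flip} as the single step. Every digraph in $\mathcal{F}$ arises from $G$ by a finite sequence of operations, and each operation reverses both edges in a pair labelled $e$ and $-e \bmod n+1$. The vertex set and the labelling $b$ stay the same throughout. So the first thing to fix is a precise description of $\mathcal{F}$. Partition $\mathbb{Z}_{n+1}\setminus\{0\}$ into the pairs $\{e,-e\}$, together with the singleton $\{(n+1)/2\}$ when $n+1$ is even. A member of $\mathcal{F}$ is then the digraph $G_P$ obtained from $G$ by reversing, for each pair in some chosen set $P$ of such pairs, the two edges whose labels form that pair.

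The key step is to prove the statement ``$b$ is an oriented $\beta$-valuation of $G_P$'' by induction on $|P|$. When $|P|=0$ we have $G_P=G$, and this is the hypothesis. For the inductive step, write $P=P'\cup\{\{e,-e\}\}$ and assume $b$ is an oriented $\beta$-valuation of $G_{P'}$. Because $b$ is an oriented $\beta$-valuation of $G_{P'}$, the edge labels are exactly the nonzero elements of $\mathbb{Z}_{n+1}$, each appearing once. So there is exactly one edge of $G_{P'}$ with label $e$ and exactly one with label $-e$. Reversing both of them gives $G_P$, and Lemma \ref{flip} then says $b$ is an oriented $\beta$-valuation of $G_P$.

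The one point that needs care is checking that $G_P$ does not depend on the order in which the pairs are reversed, and that reversals are well defined after earlier ones. Reversing the edges labelled $e$ and $-e$ turns them into edges labelled $-e$ and $e$. This swaps the labels within the pair and leaves every other edge and its label unchanged. Hence the pair of edges associated with each element pair $\{f,-f\}$ is the same physical pair of underlying edges throughout, and operations on different pairs commute. With that observation the induction closes, and every member of $\mathcal{F}$ inherits the oriented $\beta$-valuation $b$.

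For counting purposes only, one can also note that a self-inverse label $(n+1)/2$ gives a reversal that changes nothing in the label multiset. This is consistent with the bound of $2^{\lfloor n/2\rfloor}$ distinct digraphs, but it is not needed for the proof.
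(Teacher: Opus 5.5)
Your argument is correct and is essentially the paper's. The paper states the corollary as an immediate consequence of Lemma \ref{flip}, applied once for each chosen pair $\{e,-e\}$. Your induction on the number of reversed pairs, with the check that each reversal only swaps the labels $e$ and $-e$ on the same two underlying edges, is a careful write-up of that repeated application.

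One small correction to your counting aside: reversing the lone edge labelled $(n+1)/2$ does produce a different digraph, even though the label multiset is unchanged. So if that reversal is allowed, the count for odd $n$ is $2^{\lceil n/2\rceil}$ rather than $2^{\lfloor n/2\rfloor}$. As you say, this does not affect the proof.
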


Other ways to obtain a new oriented $\beta$-valuation from an existing one are given below.
\begin{lemma}\label{addorbeta}
Let $G$ be a digraph with $n$ edges and an oriented $\beta$-valuation $b$. Working modulo $n+1$:
\begin{itemize}
\item[(i)] if $\operatorname{gcd}(n+1,k)=1$, then $b'(x)=k \cdot b(x)$ is also an oriented $\beta$-valuation of $G$;
\item[(ii)] for any $m \in \mathbb{Z}$, $b'(x)=b(x)+m$ is also an oriented $\beta$-valuation of $G$.
\end{itemize}
\end{lemma}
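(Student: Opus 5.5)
Both parts can be settled by direct computation in $\mathbb{Z}_{n+1}$. The plan is to check two things for the new map $b'$: that it is still injective into $\{0,\ldots,n\}$ (identified with $\mathbb{Z}_{n+1}$), and that the multiset of directed edge differences $\{b'(v)-b'(u) : (u,v)\in\overrightarrow{E}(G)\}$ is again exactly $\mathbb{Z}_{n+1}\setminus\{0\}$. Throughout, $b'$ is read as the residue modulo $n+1$, taken in $\{0,\ldots,n\}$. The orientation of $G$ is left unchanged in both cases.

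For (ii), the translation $x\mapsto x+m$ is a bijection of $\mathbb{Z}_{n+1}$, so $b'$ is injective. For each directed edge $(u,v)$ we have
\[
b'(v)-b'(u)=(b(v)+m)-(b(u)+m)=b(v)-b(u) \pmod{n+1}.
\]
So the edge-label multiset is literally unchanged, and it equals $\mathbb{Z}_{n+1}\setminus\{0\}$ by hypothesis.

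For (i), since $\gcd(k,n+1)=1$, the element $k$ is a unit in $\mathbb{Z}_{n+1}$. Hence multiplication by $k$ is a permutation $\mu_k$ of $\mathbb{Z}_{n+1}$ fixing $0$, and $b'=\mu_k\circ b$ is injective. Each edge label becomes
\[
b'(v)-b'(u)=k\bigl(b(v)-b(u)\bigr) \pmod{n+1}.
\]
The new multiset of edge labels is therefore the image under $\mu_k$ of $\mathbb{Z}_{n+1}\setminus\{0\}$, with each element once. Because $\mu_k$ is a bijection preserving $\{0\}$, it restricts to a bijection of $\mathbb{Z}_{n+1}\setminus\{0\}$, so the image is again each nonzero element exactly once.

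The only point needing care is the reduction convention: $b'$ must be taken modulo $n+1$, not as an integer, so that it maps into $\{0,\ldots,n\}$. Once that is fixed, injectivity and the edge-label multiset transfer directly, and there is no real obstacle.
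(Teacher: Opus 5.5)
Your proof is correct and takes essentially the same approach as the paper: for (i), the edge differences are multiplied by the unit $k$, which permutes $\mathbb{Z}_{n+1}\setminus\{0\}$. Your proposal is slightly more complete, since the paper proves only (i) and does not explicitly check that $b'$ remains injective or treat the translation in (ii), both of which you verify.
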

\begin{proof}
We prove (i). As $b$ is an oriented $\beta$-valuation $b$ for $G$, $\bigcup_{(u,v) \in G}\{b(v)-b(u) \mod n+1\} = \mathbb{Z}_{n+1}\backslash\{0\}$ and as $\operatorname{gcd}(n+1,k)=1$, $\{kx:x \in \mathbb{Z}_{n+1} \backslash\{0\} \} = \mathbb{Z}_{n+1}\backslash\{0\}$.  Hence
$\bigcup_{(u,v) \in G}\{b'(v)-b'(u) \mod n+1\}\equiv \bigcup_{(u,v) \in G}\{k(b(v)-b(u)) \mod n+1\} = \{kx:x \in \mathbb{Z}_{n+1} \backslash\{0\} \}  = \mathbb{Z}_{n+1}\backslash\{0\}$.
\end{proof}

\begin{example}
Applying Lemma \ref{addorbeta}(ii) to Example \ref{orbeta} with $m=2$ gives an oriented near $\alpha$-valuation (which is in fact an $\alpha$-valuation for the undirected graph, with $x=4$).
\end{example}

As with $\beta$-valuations, we can use oriented $\beta$-valuations to form digraph-defined EDFs.

\begin{theorem}\label{orbetaEDF}
Let $G$ be a digraph with $n$ edges, with an oriented $\beta$-valuation $b$. Then there exists a $(n,m,1,1;G)$-EDF in $\mathbb{Z}_{n+1}$.
\end{theorem}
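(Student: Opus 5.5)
The plan is to mirror the proof of the earlier theorem that builds a $G^b$-defined EDF from a $\beta$-valuation. The only difference is that the differences are now read modulo $n+1$ rather than as integers. Let $m=|V(G)|$. For each vertex $v \in V(G)$, define the singleton set $A_v=\{b(v)\}$, viewed as a subset of $\mathbb{Z}_{n+1}$ via the identification of $\{0,\ldots,n\}$ with $\mathbb{Z}_{n+1}$ fixed earlier.

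The key steps are as follows.

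\textbf{Step 1 (the sets).} Since $b$ is one-to-one into $\{0,1,\ldots,n\}$, the sets $A_v$ are pairwise disjoint, and each has size $l=1$.

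\textbf{Step 2 (differences along an edge).} For each directed edge $(u,v)\in\E(G)$, the multiset $\Delta(A_v,A_u)$ consists of the single element $b(v)-b(u) \bmod n+1$.

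\textbf{Step 3 (the multiset union).} Taking the union over all directed edges gives
\[\bigcup_{(u,v)\in \E(G)} \Delta(A_v,A_u)=\bigcup_{(u,v)\in \E(G)}\{b(v)-b(u) \bmod n+1\}=\mathbb{Z}_{n+1}\setminus\{0\},\]
which is exactly the defining property of an oriented $\beta$-valuation.

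\textbf{Step 4 (conclusion).} By Definition \ref{def:H-defined EDF}, with $H=G$ and $\lambda=1$, the ordered collection $(A_{v_1},\ldots,A_{v_m})$ is therefore the required EDF in $\mathbb{Z}_{n+1}$.

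No step is a real obstacle; the argument is a direct unwinding of definitions. The only care needed is with the parameters and the orientation convention. The group has order $n+1$, matching the earlier $\beta$-valuation theorem's use of $\mathbb{Z}_{n+1}$. The number of sets equals the number of vertices. The orientation of each edge $(u,v)$ must be matched with $\Delta(A_v,A_u)$, as in Definition \ref{def:H-defined EDF}. Unlike the undirected case, no natural orientation is needed here: the digraph $G$ itself defines the EDF.
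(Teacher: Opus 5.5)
Your proposal is correct and is essentially the paper's own proof: take singleton sets $A_v=\{b(v)\}$ in $\mathbb{Z}_{n+1}$, and the oriented $\beta$-valuation condition gives $\bigcup_{(u,v)}\Delta(A_v,A_u)=\mathbb{Z}_{n+1}\setminus\{0\}$ directly; you also spell out that disjointness follows from injectivity of $b$, which the paper leaves implicit. One minor point, which the paper shares: under Definition \ref{def:H-defined EDF} the first parameter is the group order, so the object is really an $(n+1,m,1,1;G)$-EDF with $m=|V(G)|$.
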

\begin{proof}
As $b$ is an oriented $\beta$-valuation for $G$, $\bigcup_{(u,v)\in E(G)} b(v)-b(u) \mod n+1 = \{1, \ldots n\}$.
Define the following subsets of $\mathbb{Z}_{n+1}$ (where we identify the elements of $\mathbb{Z}_{n+1}$ with the corresponding integers in $\{0,\ldots,n\}$): let $A_v=\{b(v)\}$ for each $v \in V(G)$. Then we have:
\[\bigcup_{(u,v)\in E(G)} \Delta(A_v,A_u)= \{1, \ldots n\}=\mathbb{Z}_{n+1}\backslash\{0\}.\]
Hence $(A_{v_1},A_{v_2},\ldots,A_{v_{m}})$ forms a $(n,m,1,1;G)$-EDF in $\mathbb{Z}_{n+1}$.
\end{proof}

In order to allow the use of the blow-up approach to make EDFs with larger sets, we next introduce the notion of an oriented $\beta$-valuation with an extra condition.

\begin{definition}An oriented near $\alpha$-valuation $b$ of a digraph $G$ with $n$ edges is an oriented $\beta$‐valuation from $V(G)$ to $\{0,\ldots,n\}$ which satisfies the following extra condition: there is a partition of $V(G)$ into $V_{small}$ and $V_{large}$ such that
\begin{itemize}
\item for each $v \in V_{small}, b(v) < b(u)$ for all vertices $u$ adjacent to $v$ in the underlying graph, and 
\item for each $v \in V_{large}, b(v) > b(u)$ for all vertices $u$ adjacent to $v$ in the underlying graph.
\end{itemize}
\end{definition}

\begin{example}\label{oralp}
    The digraph $H$ below, with $8$ directed edges, exhibits an oriented near $\alpha$-valuation that is not a near $\alpha$-valuation (due to the directed edge with label $-7 \equiv 2 \mod 9$):
    \begin{center}
    \begin{tikzpicture}[baseline=4mm,scale=0.85]
    \begin{scope}[radius=1mm]
    \fill(0,2) circle;
    \fill(0,-1) circle; 
    \fill(6,-1) circle;
    \fill(6,2) circle; 
    \fill(3,-1) circle;
    \fill(3,2) circle;
    \node at (-0.5,2.5) {\textbf{0}};
    \node at (-0.5,-1.5) {\textbf{7}};
    \node at (3,2.5) {\textbf{4}};
    \node at (6,2.5) {\textbf{1}};
    \node at (6,-1.5) {\textbf{5}};
    \node at (3.5,-1.5) {\textbf{8}};
    \node at (2.25,1.75) {3};
    \node at (-0.5,1.5) {7};
    \node at (0.5,1) {8};
    \node at (0.75,2) {5};
    \node at (3.25,1.25) {4};
    \node at (3.75,1.75) {1};
    \node at (5.25,2) {6};
    \node at (6,1.5) {2};
    \end{scope}
    \begin{scope}[thick]
    \path [draw=black,postaction={on each segment={start arrow=black}}](0,2)--(0,-1);
    \path [draw=black,postaction={on each segment={start arrow=black}}](0,2)--(3,-1);
    \path [draw=black,postaction={on each segment={start arrow=black}}](3,2)--(3,-1);
    \path [draw=black,postaction={on each segment={start arrow=black}}](0,2)--(6,-1);
    \path [draw=black,postaction={on each segment={start arrow=black}}](3,2)--(0,-1);  
    \path [draw=black,postaction={on each segment={start arrow=black}}](3,2)--(6,-1);
    \path [draw=black,postaction={on each segment={start arrow=black}}] (6,2)--(0,-1);
    \path [draw=black,postaction={on each segment={start arrow=black}}] (3,-1)--(6,2);
    \end{scope}
    \end{tikzpicture}
    \end{center}
    \end{example}

\begin{example}\label{orcycle}
The oriented $\beta$-valuation for the oriented $C_m$ ($m \equiv 2 \mod 4$) defined in Theorem \ref{thm:mcycle2mod4} is in fact an oriented near $\alpha$-valuation, as it can be checked that the vertex bipartition property is satisfied.
\end{example}

We can extend the blow-up theorems to this type of valuation.  

\begin{theorem}\label{orblowlarge}
    Let $G$ be a digraph with an oriented near $\alpha$-valuation $p$, and let $l$ be a positive integer. The blow-up $G_{1,l}$ has an oriented near $\alpha$-valuation $p'$.
\end{theorem}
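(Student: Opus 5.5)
The plan is to copy the labelling from the proof of Theorem \ref{pblowlarge} and work modulo $nl+1$ rather than over the integers. Write $n=|\overrightarrow{E}(G)|$ and let $V(G)=V_{small}\cup V_{large}$ be the partition belonging to $p$.

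\textbf{Labelling.} Each $y\in V_{large}$ is replaced by the $l$ vertices with labels $V_y=\{lp(y)-i: 0\le i\le l-1\}$. Each $x\in V_{small}$ is kept as a single vertex with label $lp(x)$. This gives the blow-up $G_{1,l}$. Every directed edge between $x$ and $y$, in whichever direction it points in $G$, is replaced by the $l$ edges between $V_x$ and $V_y$ in the same direction.

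\textbf{Injectivity, range and the partition.} Since $p(y)\ge 1$ for $y\in V_{large}$, each $V_y$ lies in the interval $(l(p(y)-1),\,lp(y)]\subseteq[1,nl]$. These intervals are pairwise disjoint because the labels $p(\cdot)$ are distinct. A small label $lp(x)$ could only meet $V_y$ at $i=0$, which would force $p(x)=p(y)$, so it does not. Hence $p'$ is injective into $\{0,\ldots,nl\}$. The order property is inherited: if $x$ and $y$ are adjacent with $x\in V_{small}$, $y\in V_{large}$, then $p(x)<p(y)$, so every element of $V_y$ is at least $l(p(y)-1)+1>lp(x)$. Thus the blown-up sets $\bigcup_{y\in V_{large}}V_y$ and $V_{small}$ give the required partition for $p'$.

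\textbf{Edge labels.} We compute modulo $nl+1$, splitting by the direction of each edge of $G$. Let $d\in\{1,\ldots,n\}$ be the label of the edge modulo $n+1$.
\begin{itemize}
\item \emph{Edge $(x,y)$ from small to large.} Here $d=p(y)-p(x)$ exactly as integers. The new differences are $ld-i$ for $0\le i\le l-1$, which fill the block $[l(d-1)+1,ld]$, as in Theorem \ref{pblowlarge}.
\item \emph{Edge $(y,x)$ from large to small.} Here $p(x)-p(y)=d-(n+1)$ as integers. The new differences are $l(p(x)-p(y))+i=ld-l(n+1)+i$. Since $-l(n+1)=-nl-l\equiv 1-l \pmod{nl+1}$, these are congruent to $l(d-1)+1+i$ for $0\le i\le l-1$. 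So they fill exactly the same block $[l(d-1)+1,ld]$.
\end{itemize}
Because $p$ is an oriented $\beta$-valuation, each $d\in\{1,\ldots,n\}$ occurs exactly once. The blocks therefore tile $[1,nl]=\mathbb{Z}_{nl+1}\setminus\{0\}$, each element appearing once, and $p'$ is an oriented $\beta$-valuation. Together with the partition above, it is an oriented near $\alpha$-valuation.

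The main obstacle is the wrap-around case of reversed edges. One has to check that the reduction modulo $nl+1$ sends the reversed differences onto the same block as a forward edge with label $d$, rather than onto a shifted or overlapping interval. The congruence $-l(n+1)\equiv 1-l$ settles this.
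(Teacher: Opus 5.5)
Your proof is correct and follows the paper's argument essentially step for step. It uses the same labelling ($lp(y)-i$ on the blown-up large vertices, $lp(x)$ on the small ones), the same split into small-to-large and large-to-small edges, and the same reduction $-l(n+1)\equiv 1-l$ modulo $nl+1$ to show that reversed edges fill the block $[l(d-1)+1,ld]$. You are in fact slightly more careful than the paper: you check injectivity and range explicitly, and you use the correct lower bound $l(p(y)-1)+1>lp(x)$ for the partition property, whereas the paper writes $p'(s)\ge lp(y)$, which is false for $i>0$ but does not affect the conclusion.
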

\begin{proof}
Consider a directed edge $(u,v) \in \overrightarrow{E}(G)$; as $p$ is an oriented near $\alpha$-valuation, $p(v)-p(u) \equiv k \mod n+1$
for some $k\in \{1,\ldots,n\}$.  By the second condition for an oriented near $\alpha$-valuation, one of the vertices $u,v$ is in $V_{large}$ and the other is in $V_{small}$. For each $k \in \{1,\ldots,n\}$, it is obtained as the label $p(v)-p(u)$ of precisely one directed edge $(u,v)$ of $G$.  Denote by $P$ the set of $k\in \{1,\ldots,n\}$ which arise as positive integer differences (i.e. from $(u,v)$ where $u \in V_{small}$ and $v \in V_{large}$), and by $N$ the set of $k \in \{1,\ldots,n\}$ which arise as negative integer differences (i.e. from $(u,v)$ where $u \in V_{large}$ and $v \in V_{small}$).  We work in the integers until the final step of the proof.  

The first case (i.e. labels from $P$) is exactly the same as in the near $\alpha$ case. 
\begin{itemize}
\item We replace $v$ with the $l$-set of vertices labelled by $V_v=\bigcup_{i=0}^{l-1}\{lp(v)-i\}$.
\item We replace $u$ with a singleton set of vertices labelled by $V_u = \{lp(u)\}$.
\item The directed edge $(u,v)$ is replaced with the $l$ directed edges from $V_u$ to $V_v$; the corresponding differences for these edges are:
$lp(v)-i-lp(u) =  l(p(v)-p(u))-i = lk-i$
where $0 \leq i \leq l-1$ and $k \in P$. 
\end{itemize}

Taking multiset unions of all of these differences:
\[
\bigcup_{\substack{(u,v) \in G \\ u \in V_{small}}} \left( \bigcup_{i=0}^{l-1} \{ (l(p(v)-p(u))-i) \} \right)
= \bigcup_{k\in P} \bigcup_{i=0}^{l-1} \{ lk-i \} = \bigcup_{k \in P}[l(k-1)+1,lk]. 
\]

Now consider the directed edges with labels corresponding to $N$. 
\begin{itemize}
\item We replace $u$ with the $l$-set of vertices labelled by $V_u=\bigcup_{i=0}^{l-1}\{lp(u)-i\}$.
\item We replace $v$ with a singleton set of vertices labelled by $V_v = \{lp(v)\}$.
\item The directed edge $(u,v)$ is replaced with the $l$ directed edges from $V_u$ to $V_v$.  Since $p(v)-p(u)$ is a negative integer, the actual integer difference corresponding to $k \in N$ in the $\mathbb{Z}_{n+1}$ setting was in fact $-(n+1-k)$.  So the corresponding integer differences for these edges in the blow-up setting are:
\[lp(v)-(lp(u)-i)=l(p(v)-p(u))+i = l(-(n+1-k))+i=lk-(l-1)+i-(nl+1).\]
where $0 \leq i \leq l-1$ and $k \in N$.  
\end{itemize}
Taking multiset unions of all of these differences:
\[
\bigcup_{(u,v) \in G, v \in V_{small}} \left( \bigcup_{i=0}^{l-1} \{ (l(p(v)-p(u)-(n-1))+i) \} \right)
= \bigcup_{k\in N} \bigcup_{i=0}^{l-1} \{ lk-(l-1)+i-(nl+1) \}\]
\[= \bigcup_{k \in N}[l(k-1)+1-(nl+1),lk-(nl+1)] \equiv \bigcup_{k \in N}[l(k-1)+1,lk]\mod nl+1. \]

It is clear that all edges in the blow-up are formed by one of the two above cases. Taking the multiset union over $k \in P$ and $k \in N$ gives all $k \in \{1, \ldots ,n \}$, so (working now in $\mathbb{Z}_{nl+1}$):
\[\bigcup_{(x,y) \in G_{1,l}} \{p'(y)-p'(x)\} \equiv \bigcup_{k=1}^n[l(k-1)+1,lk] \mod nl+1 = \mathbb{Z}_{nl+1} \setminus \{0\}.\]

Now note that, since $p$ is an oriented near $\alpha$-valuation, for each $ y \in V_{large}$ and $x \in V_{small}$ we have $p(y) > p(x)$ (in the ordering $0<1< \cdots < n$). Hence in $G_{1,l}$ for any $s \in V_y$ and $t \in V_x$:
    \[p'(s) \geq p(y)l >   p(x)l = p'(t)\]
(in the ordering $0<1< \cdots< nl$).
Hence $p'$ is an oriented near $\alpha$-valuation.
\end{proof} 

\begin{theorem}\label{orblowsmall}
    Let $G$ be a digraph that has an oriented near $\alpha$-valuation $p$, and let $l$ be a positive integer. The blow-up $G_{l,1}$ has an oriented near $\alpha$-valuation.
\end{theorem}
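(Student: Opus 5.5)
We will mirror the proof of Theorem \ref{orblowlarge}, exactly as Theorem \ref{pblowsmall} mirrors Theorem \ref{pblowlarge}: the blown-up class is now $V_{small}$. Each $x\in V_{small}$ is replaced by the $l$-set $V_x=\{lp(x)+i: 0\le i\le l-1\}$, and each $y\in V_{large}$ by the singleton $V_y=\{lp(y)\}$. As before, every directed edge of $G$ joins $V_{small}$ to $V_{large}$. Each $k\in\{1,\ldots,n\}$ is the label of exactly one directed edge $(u,v)$. We split $\{1,\ldots,n\}=P\cup N$ according to whether $u\in V_{small}$ (so $p(v)-p(u)=k$ is a positive integer) or $u\in V_{large}$ (so $p(v)-p(u)=-(n+1-k)$ as an integer). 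All computations are done in $\mathbb{Z}$ until the end, when we reduce modulo $nl+1$.

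For $k\in P$ with edge $(u,v)$, $u\in V_{small}$, $v\in V_{large}$, the $l$ new edges run from $V_u$ to $V_v$. Their differences are $lp(v)-(lp(u)+i)=lk-i$ for $0\le i\le l-1$, which fill the interval $[l(k-1)+1,lk]$, exactly as in Theorem \ref{pblowsmall}. For $k\in N$ with edge $(u,v)$, $u\in V_{large}$, $v\in V_{small}$, the new edges run from $V_u=\{lp(u)\}$ to $V_v$. Their differences are
$$lp(v)+i-lp(u)=-l(n+1-k)+i=lk-(l-1)+(l-1-i)-(nl+1)+\ldots$$
More carefully, they equal $lk-l+i-(nl+1)+1$, i.e. $l(k-1)+1+i-(nl+1)$ for $0\le i\le l-1$. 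Modulo $nl+1$ these again fill $[l(k-1)+1,lk]$. Taking the union over $P\cup N$ gives $\bigcup_{k=1}^n[l(k-1)+1,lk]=\mathbb{Z}_{nl+1}\setminus\{0\}$. Every edge of $G_{l,1}$ arises in exactly one of these two ways, so $p'$ is an oriented $\beta$-valuation.

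It remains to check the range, injectivity and the bipartition condition. Since $x\in V_{small}$ has a neighbour with larger label, $p(x)\le n-1$, so $lp(x)+i\le ln-1$ and all labels lie in $\{0,\ldots,nl\}$. Injectivity holds because distinct small vertices give disjoint blocks $[lp(x),lp(x)+l-1]$, large vertices give multiples of $l$ at distinct values, and a large label $lp(y)$ cannot lie in a small block $[lp(x),lp(x)+l-1]$ since $p(y)\ne p(x)$. For adjacent $y\in V_{large}$, $x\in V_{small}$ we have $p(y)\ge p(x)+1$, hence $p'(s)=lp(y)\ge lp(x)+l>p'(t)$ for every $t\in V_x$. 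This keeps the partition into $\bigcup_{x\in V_{small}}V_x$ and $V_{large}$.

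The main obstacle is the sign bookkeeping in the $N$ case. The order of subtraction is reversed relative to the $P$ case, so we must confirm that the shift by $+i$ (rather than $-i$) still produces a full residue interval of length $l$ aligned with $[l(k-1)+1,lk]$ after adding $nl+1$. This needs the integer identity $-l(n+1-k)+i+(nl+1)=l(k-1)+1+i$, which we will verify directly.
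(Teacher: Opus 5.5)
Your proposal is correct and follows the paper's proof essentially step for step. You blow up $V_{small}$ into $\{lp(x)+i\}$ and keep each large vertex as the singleton $\{lp(y)\}$; the labels in $P$ give differences $lk-i$, the labels in $N$ give $-l(n+1-k)+i\equiv l(k-1)+1+i \bmod nl+1$, and each case fills $[l(k-1)+1,lk]$.

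Your extra checks of range, injectivity, and the adjacent-pair inequality $lp(y)\ge lp(x)+l>lp(x)+i$ are more accurate than the paper's last step. That step reuses $p'(t)=p(x)l$ from the large-side blow-up, although here $t$ can be $lp(x)+i$ with $i>0$. You should delete your garbled first display with the trailing $+\ldots$ and keep only the corrected expression.
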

\begin{proof}

Consider a directed edge $(u,v) \in \overrightarrow{E}(G)$; as $p$ is an oriented near $\alpha$-valuation, 
$p(v)-p(u) \equiv k \mod n+1$
for some $k\in \{1,\ldots,n\}$.  By the second condition for an oriented near $\alpha$-valuation, one of the vertices $u,v$ is in $V_{large}$ and the other is in $V_{small}$. For each $k \in \{1,\ldots,n\}$, it is obtained as the label $p(v)-p(u)$ of precisely one directed edge $(u,v)$ of $G$.  Denote by $P$ the set of $k\in \{1,\ldots,n\}$ which arise as positive integer differences (i.e. from $(u,v)$ where $u \in V_{small}$ and $v \in V_{large}$), and by $N$ the set of $k \in \{1,\ldots,n\}$ which arise as negative integer differences (i.e. from $(u,v)$ where $u \in V_{large}$ and $v \in V_{small}$).  We work in the integers until the final step of the proof.

The first case (i.e. labels from $P$) is exactly the same as in the near $\alpha$ case. 
\begin{itemize}
\item We replace $v$ with a singleton set of vertices labelled by $V_v=\{lp(v)\}$.
\item We replace $u$ with the $l$-set of vertices labelled by $V_u = \bigcup_{i=0}^{l-1}\{lp(u)+i\}$.
\item The directed edge $(u,v)$ is replaced with the $l$ directed edges from $V_u$ to $V_v$; the corresponding differences for these edges are:
$lp(v)i-lp(u)-i =  l(p(v)-p(u))-i = lk-i$
where $0 \leq i \leq l-1$ and $k \in P$. 
\end{itemize}

Taking multiset unions of all of these differences:
\[
\bigcup_{\substack{(u,v) \in G \\ u \in V_{small}}} \left( \bigcup_{i=0}^{l-1} \{ (l(p(v)-p(u))-i) \} \right)
= \bigcup_{k\in P} \bigcup_{i=0}^{l-1} \{ lk-i \} = \bigcup_{k \in P}[l(k-1)+1,lk]. 
\]

Now consider the directed edges with labels corresponding to $N$. 
\begin{itemize}
\item We replace $u$ with a singleton set of vertices labelled by $V_u=\{lp(u)\}$.
\item We replace $v$ with the $l$-set of vertices labelled by $V_v = \bigcup_{i=0}^{l-1}\{lp(v)+i\}$.
\item  The directed edge $(u,v)$ is replaced with the $l$ directed edges from $V_u$ to $V_v$.  Since $p(v)-p(u)$ is a negative integer, the actual integer difference corresponding to $k \in N$ in the $\mathbb{Z}_{n+1}$ setting was in fact $-(n+1-k)$.  So the corresponding integer differences for these edges in the blow-up setting are:
\[lp(v)+i-lp(u)=l(p(v)-p(u))+i = l(-(n+1-k))+i=lk-(l-1)+i-(nl+1).\]
where $0 \leq i \leq l-1$ and $k \in N$.  
\end{itemize}
Taking multiset unions of all of these differences:
\[
\bigcup_{(u,v) \in G, v \in V_{small}} \left( \bigcup_{i=0}^{l-1} \{ (l(p(v)-p(u)-(n-1))+i) \} \right)
= \bigcup_{k\in N} \bigcup_{i=0}^{l-1} \{ lk-(l-1)+i-(nl+1) \}\]
\[= \bigcup_{k \in N}[l(k-1)+1-(nl+1),lk-(nl+1)] \equiv \bigcup_{k \in N}[l(k-1)+1,lk]\mod nl+1. \]

It is clear that all edges in the blow-up are formed by one of the two above cases. Taking the multiset union over $k \in P$ and $k \in N$ gives all $k \in \{1, \ldots ,n \}$, so (working now in $\mathbb{Z}_{nl+1}$):
\[\bigcup_{(x,y) \in G_{l,1}} \{p'(y)-p'(x)\} \equiv \bigcup_{k=1}^n[l(k-1)+1,lk] \mod nl+1 = \mathbb{Z}_{nl+1} \setminus \{0\}.\]

Now note that, since $p$ is an oriented near $\alpha$-valuation, for each $ y \in V_{large}$ and $x \in V_{small}$ we have $p(y) > p(x)$ (in the ordering $0<1< \cdots < n$). Hence in $G_{l,1}$ for any $s \in V_y$ and $t \in V_x$:
    \[p'(s) \geq p(y)l >   p(x)l = p'(t)\]
(in the ordering $0<1< \cdots< nl$).
Hence $p'$ is an oriented near $\alpha$-valuation.

\end{proof}

\begin{theorem}\label{lexiorblow}
If a digraph $G$ has an oriented near $\alpha$-valuation, then its lexicographic product with $K^c_l$ has an oriented near $\alpha$-valuation.
\end{theorem}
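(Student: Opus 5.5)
The plan is to mirror the proof of Theorem \ref{blowp}, replacing its two ingredients by their oriented counterparts. Let $G$ be a digraph with an oriented near $\alpha$-valuation $p$, with associated partition $V(G)=V_{small}\cup V_{large}$. Since every edge of the underlying graph joins $V_{small}$ to $V_{large}$, the underlying graph is bipartite with this bipartition. We can therefore speak of the blow-ups $G_{l,1}$ and $G_{1,l}$ with respect to it, each edge inheriting the orientation of the edge of $G$ from which it arises.

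The first step is to apply Theorem \ref{orblowsmall}. This gives an oriented near $\alpha$-valuation $p'$ of $G'=G_{l,1}$, with respect to the partition in which each $V_x$ ($x\in V_{small}$) lies in the small part and each $y\in V_{large}$ lies in the large part. This is exactly the bipartition of $G'$ induced by the blow-up. One should check from the proof of Theorem \ref{orblowsmall} that the labels $p'$ on $G'$ do satisfy the partition condition with this choice of $V'_{small}$ and $V'_{large}$. The displayed inequality $p'(s)\ge lp(y)>lp(x)+(l-1)\ge p'(t)$ gives this, because $p(y)\ge p(x)+1$ in integers. The relevant direction of each edge is preserved, as the orientation is copied from $G$.

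The second step is to apply Theorem \ref{orblowlarge} to $G'$ with $p'$, obtaining an oriented near $\alpha$-valuation of $G'_{1,l}$, the graph in which the large part of $G'$ is now blown up by a factor of $l$. By the argument of Corollary \ref{lexiblow} (blowing up first $V_{small}$, then $V_{large}$, by a factor of $l$ each), $G'_{1,l}$ is the balanced blow-up of $G$ by a factor of $l$. Lemma \ref{blow-upblow-up} confirms it is a blow-up of $G$, and by Lemma \ref{lexi} it equals $G\cdot K^c_l$. Here orientations are carried along: each directed edge of $G\cdot K_l^c$ between $V_u$ and $V_v$ is oriented as $(u,v)$ is in $G$.

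The main obstacle I expect is bookkeeping rather than substance. I must make sure the hypotheses of Theorem \ref{orblowlarge} are met by $(G',p')$: the edge count of $G'$ is $nl$, so the modulus becomes $nl+1$ and then $nl^2+1$. I must also check that the partition used in the second blow-up is the one coming from $p'$, not merely some bipartition of $G'$. Once this is verified, the result follows directly.
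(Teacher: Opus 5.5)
Your proposal is correct and follows the paper's proof: apply Theorem \ref{orblowsmall} to obtain $G_{l,1}$, then Theorem \ref{orblowlarge} to that graph (using the partition induced by the first blow-up), and identify the result with $G\cdot K^c_l$ via Corollary \ref{lexiblow}. Your extra checks are exactly the bookkeeping the paper's one-line proof leaves implicit. These are that the second step uses the partition coming from $p'$, and the inequality $lp(y)\ge lp(x)+l>lp(x)+(l-1)$ for adjacent $x,y$. That inequality also repairs the paper's displayed $p'(t)=lp(x)$ in the proof of Theorem \ref{orblowsmall}.
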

\begin{proof}
    This follows from Theorem \ref{orblowlarge}, Theorem \ref{orblowsmall} and Corollary \ref{lexiblow}.
\end{proof}

\begin{theorem}\label{thm:EDFfromorblowup}
Let $G$ be a digraph $G=(V,E)$ with an oriented near $\alpha$-valuation $p$.  Then there exists an $(|E|l^2 + 1, |V|, l, 1; G)$-EDF in $\mathbb{Z}_{|E|l^2+1}$.
\end{theorem}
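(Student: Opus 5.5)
The plan mirrors the proof of Theorem \ref{thm:nearalpha-blowup}, with the natural orientation replaced by the given orientation of $G$ and integer differences replaced by differences modulo $|E|l^2+1$. Write $n=|E|$. Let $H=G\cdot K_l^c$, the balanced blow-up of $G$ by a factor of $l$. By Corollary \ref{lexiblow}, $H$ is obtained in two stages: first blow up $V_{small}$ by a factor of $l$ (Theorem \ref{orblowsmall}), then blow up the large side of the resulting digraph by a factor of $l$ (Theorem \ref{orblowlarge}). In each stage every vertex set $V_u$ inherits the orientation of $u$. That is, each directed edge $(u,v)$ of $G$ is replaced by all directed edges from $V_u$ to $V_v$, regardless of whether the corresponding integer difference is positive (label in $P$) or negative (label in $N$). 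Theorem \ref{lexiorblow} then gives an oriented near $\alpha$-valuation $p'$ of $H$ with this inherited orientation.

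Before applying Theorem \ref{orblowlarge} at the second stage, I must check that the intermediate digraph $G_{l,1}$ satisfies its hypotheses. Its oriented near $\alpha$-valuation has modulus $nl+1$, and it has the bipartition $V'_{small}=\bigcup_{u\in V_{small}}V_u$, $V'_{large}=V_{large}$. Both facts are established at the end of the proof of Theorem \ref{orblowsmall}, using the ordering inequality $p'(s)\ge lp(y)>lp(x)\ge p'(t)$. Applying Theorem \ref{orblowlarge} with $n$ replaced by $nl$ then gives labels in $\{0,\dots,nl^2\}$, and the edge differences cover $\mathbb{Z}_{nl^2+1}\setminus\{0\}$ exactly once.

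Next, for each $u\in V(G)$, set $A_u=\{p'(x):x\in V_u\}\subseteq \mathbb{Z}_{nl^2+1}$. Each $A_u$ has size $l$, and the sets are pairwise disjoint because $p'$ is injective. The edges of $H$ are exactly the $l^2$ directed edges from $V_u$ to $V_v$ for each $(u,v)\in\overrightarrow{E}(G)$. Hence
$$\bigcup_{(u,v)\in\overrightarrow{E}(G)}\Delta(A_v,A_u)=\bigcup_{(x,y)\in\overrightarrow{E}(H)}\{p'(y)-p'(x)\}=\mathbb{Z}_{nl^2+1}\setminus\{0\},$$
which is exactly the defining condition of an $(nl^2+1,|V|,l,1;G)$-EDF.

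The main obstacle is confirming the orientation bookkeeping. The construction in Theorems \ref{orblowlarge} and \ref{orblowsmall} must orient blown-up edges according to $G$'s orientation, and not according to the natural orientation of the new labels. This is what allows the digraph defining the EDF to be $G$ itself rather than some $G^{p}$. In the proofs, edges with labels in $N$ are explicitly taken from $V_u$ to $V_v$, so the inheritance holds. Unlike the undirected case, Lemma \ref{blowdirec} is therefore not needed; the orientation comes for free from the construction.
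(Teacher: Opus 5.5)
Your proposal is correct and is essentially the paper's own argument: apply Theorem \ref{lexiorblow} to $G\cdot K_l^c$, note that each directed edge $(u,v)$ of $G$ becomes the $l^2$ edges from $V_u$ to $V_v$ with the same orientation, and group the labels into the sets $A_u$. Your extra checks on the intermediate digraph $G_{l,1}$ and on the disjointness of the $A_u$ are sound, with one small slip in the inequality you quote: at the small-side stage the small vertices receive labels $lp(x)+i\ge lp(x)$, so your last step $lp(x)\ge p'(t)$ fails for $i\ge 1$; the correct chain is $p'(s)=lp(y)\ge lp(x)+l>lp(x)+l-1\ge p'(t)$, which holds for adjacent $x\in V_{small}$, $y\in V_{large}$ because $p(y)-p(x)\ge 1$ is an integer.
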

\begin{proof}
    As $G$ has an oriented near $\alpha$-valuation, by Theorem \ref{lexiorblow}, so does $G \cdot K_{l}^C$; denote this oriented near $\alpha$-valuation by $p'$.  As this is an oriented near $\alpha$-valuation we have $\bigcup_{(x,y)\in \overrightarrow{E}(G \cdot K_{l}^C)} \{p'(y)-p'(x) \mod nl^2+1\} =\{1, \ldots nl^2\}$. Recall from the proof of Theorem \ref{orblowlarge} and Theorem \ref{orblowsmall} that each edge $(u,v) \in \overrightarrow{E}(G)$ corresponds to the $l^2$ edges from $V_u$ to $V_v$ in $G \cdot K_{l}^C$. Grouping the vertices of $G \cdot K_{l}^C$ according to these sets, and considering the corresponding edges between the sets in $G$ gives:
    \[\bigcup_{(u,v)\in \overrightarrow{E}(G)} \Delta(V_v,V_u ) = \bigcup_{(x,y)\in \overrightarrow{E}(G \cdot K_{l}^C)} \{p(y)-p(x)\}=\{1, \ldots nl^2\}=\mathbb{Z}_{nl^2+1}\backslash\{0\}.\]
\end{proof}

\begin{example}
We consider $H$ from Example \ref{oralp}, with $6$ vertices and $8$ edges. Using its oriented near $\alpha$-valuation, we can blow up the sets by a factor of $3$ and create a $(73, 6, 3, 1; H)$-EDF in $\mathbb{Z}_{73}$. The sets are (where for simplicity we use subscripts corresponding to the vertex label in each case): $V_0=\{0,3,6\}, V_{4} = \{36,39,42\}, V_{1} = \{9,12,15\},V_7 = \{63,62,61\}, V_8 = \{72,71,70\}$ and $V_5 = \{ 45,44,43\}$. Difference multisets are: $\Delta(V_5,V_4)=[1,9]$, $\Delta(V_1,V_8)=[10,18]$, $\Delta(V_7,V_4)=[19,27]$, $\Delta(V_8,V_4)=[28,36]$, $\Delta(V_5,V_0)=[37,45]$, $\Delta(V_7,V_1)=[46,54]$, $\Delta(V_7,V_0)=[55,63]$ and $\Delta(V_8,V_0)=[64,72]$.
\end{example}

\begin{remark}
For a graph $G$ with a near $\alpha$-valuation $a$, Theorem \ref{thm:nearalpha-blowup} yields a $G^a$-defined EDF with $\lambda=1$; taking the union of its directed edge-set $E$ and the edge-set obtained by reversing all edges in $E$, we obtain a $(n,m,l,2;G)$-EDF.  Similarly, for any digraph $H^*$ with an oriented near $\alpha$-valuation, denote by $H$ its underlying graph; by taking the edge-set of the $H^*$-defined EDF with $\lambda=1$ (obtained by Theorem \ref{thm:EDFfromorblowup}) and its reverse, we can obtain an $(n,m,l,2;H)-$EDF.
\end{remark}

\section{Digraph-defined EDFs with prescribed orientation}

So far in this paper, our priority has been to find \emph{some} orientation of our chosen underlying graph which yields a suitable digraph-defined EDF. We have allowed the orientation to be determined by the natural orientation of the vertex-labelling which we are using, or made adjustments to obtain the necessary differences.  However, in some situations a specific orientation is required, e.g. CEDFs are defined with a unidirectional (``clockwise") orientation around the cycle.  In \cite{PatStiCEDF}, combinations of techniques similar to those presented in the previous section are implicitly used to obtain $1$-CEDFs.  
We now show how we can use the results of the previous section to obtain a ``standard" orientation for some other natural classes of graphs, and hence corresponding digraph-defined EDFs.

We start by briefly considering unidirectional paths.  The vertex labelling for path $P_m$ (for even $m \geq 2$) given in Theorem \ref{pathalpha} gives rise to an oriented $\beta$-valuation with unidirectional orientation (stated in \cite{BloHsu}). The edge differences obtained using the natural orientation are $\{1,3,\ldots,m-1\}$ and $\{2,4,\ldots,m-2\}$; as the latter set is self-negative in $\mathbb{Z}_m$, we can apply Lemma \ref{flip} to reverse the orientation of all such edges to give the directed edge set for the unidirectional path $P_m$. We can use the blow-up process on this.

\begin{theorem}
Let $m \geq 2$ be even, and let the digraph $P_m^*$ be the unidirectional path with vertex-set $\{v_1,\ldots,v_m\}$ and directed edge-set $\{(v_1,v_2),\ldots,(v_{m-1},v_m)\}$.  Then there exists an $((m-1)l^2+1,m,l,1;P^*_m)$-EDF in $\mathbb{Z}_{(m-1)l^2+1}$, given by $\mathcal{A}=\{A_0,A_1,\ldots, A_{m-1}\}$
where
\begin{itemize}
\item $A_i = \{\frac{il^2}{2}+jl: 0 \leq j \leq l-1\}$ for $i$ even;
\item $A_i=\{(m-\frac{i+1}{2})l^2-j: 0 \leq j \leq l-1\}$ for $i$ odd.
\end{itemize}
\end{theorem}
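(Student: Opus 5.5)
The plan is to verify the statement directly, and to recognise the sets $A_i$ as the output of the blow-up machinery of the previous section. Set $N=(m-1)l^2+1$. The set $A_i$ corresponds to the vertex $v_{i+1}$. The $\alpha$-valuation of Theorem \ref{pathalpha} gives $a(v_{i+1})=i/2$ for $i$ even (these vertices form $V_{small}$) and $a(v_{i+1})=m-\frac{i+1}{2}$ for $i$ odd (these form $V_{large}$). As explained before the statement, reversing the self-negative labels $\{2,4,\ldots,m-2\}$ via Lemma \ref{flip} turns this into an oriented near $\alpha$-valuation of the unidirectional path $P_m^*$.

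First I identify the sets. Blowing up $V_{small}$ and then $V_{large}$, as in Theorems \ref{orblowsmall} and \ref{orblowlarge}, sends a small label $a$ to $\{l(la+j)\}=\{l^2a+jl\}$. It sends a large label $a$ to $\{l\cdot la-j\}=\{l^2a-j\}$, with $0\le j\le l-1$ in both cases. These are exactly the stated $A_i$. So the result follows formally from Theorem \ref{thm:EDFfromorblowup} with $|E|=m-1$. However, I will also give the direct difference computation, since it is short and makes the construction transparent.

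Next I check that the sets are disjoint $l$-sets in $\{0,\ldots,N-1\}$. Small-vertex sets lie in $[l^2a,\,l^2a+l^2-l]$ and large-vertex sets lie in $[l^2a-l+1,\,l^2a]$. The path labels $a$ are distinct values in $\{0,\ldots,m-1\}$, and since $0$ is a small label the value $l^2a$ for a large vertex is at least $l^2$; hence these blocks are pairwise disjoint. They also lie in $[0,(m-1)l^2]$, because the largest label is $a=m-1$, which occurs at a large vertex.

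The main step is the difference computation for each directed edge $(v_{i+1},v_{i+2})$, that is, for $\Delta(A_{i+1},A_i)$, writing $t=j+j'l$. As $j,j'$ range over $0,\ldots,l-1$, the quantity $t$ runs over $0,\ldots,l^2-1$ exactly once.
\begin{itemize}
\item For $i$ even (edge from small to large), the difference is $l^2(m-1-i)-t$. This gives the interval $[(k-1)l^2+1,\,kl^2]$ with $k=m-1-i$, so $k$ runs over the odd numbers in $\{1,\ldots,m-1\}$.
\item For $i$ odd (edge from large to small), the difference is $l^2(i+1-m)+t$. Adding $N$ gives $l^2 i+1+t$, which is the interval $[(k-1)l^2+1,\,kl^2]$ with $k=i+1$ running over the even numbers in $\{2,\ldots,m-2\}$.
\end{itemize}
Together these intervals tile $[1,(m-1)l^2]=\mathbb{Z}_N\setminus\{0\}$ exactly once, which establishes $\lambda=1$.

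I expect the only delicate point to be the index bookkeeping: matching $A_i$ with $v_{i+1}$, and getting the modular wrap-around right in the odd case. Once the parametrisation by $t$ is set up, each edge contributes a full block of $l^2$ consecutive residues, and the rest is immediate.
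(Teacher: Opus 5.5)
Your proposal is correct and follows the paper's route. The paper gives no separate proof: it flips the self-negative even labels of the path $\alpha$-valuation via Lemma \ref{flip} and then applies Theorem \ref{thm:EDFfromorblowup}, exactly as you do with the small-then-large blow-up order. Your direct tiling of $[1,(m-1)l^2]$ by the blocks $[(k-1)l^2+1,kl^2]$ is a useful addition, since Theorem \ref{thm:EDFfromorblowup} only asserts existence, whereas your computation confirms that the specific stated sets $A_i$ work.
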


\subsection{$2$-CEDFs}
A digraph-defined EDF where the digraph is a union of two or more same-size cycles, both oriented in a clockwise direction, is of particular interest since this corresponds to a $c$-CEDF with $c>1$, for which few constructions are currently known. In \cite{Wuetal}, a cyclotomic construction of $(q,m,2;1)$-$c$-CEDFs is given ($q$ a prime power), where the value of $c$ depends on certain conditions. In particular, we are aware of no direct explicit constructions for infinite families of $2$-CEDFs.  Since a $2$-CEDF with an odd number of sets is simply a $1$-CEDF, the case of interest is when the number of sets $m$ is even.  We will provide a construction which covers all possible 2-CEDF parameters $(n,m,l;1)$ when each of the two disjoint same-size cycles has an even number of sets (i.e. $4 \mid m$).

We first consider the case where each cycle-length is congruent to $0$ modulo $4$. We use a result of \cite{Abr} to establish a suitable oriented near $\alpha$-valuation.

\begin{example}\label{ex:2C4k}
Consider the disjoint union of two cycles, each of length $4k$: we denote this graph by $2C_{4k}$. We write the first cycle as $(v_0,v_1,\ldots,v_{4k-2},v_{4k-1})$ and the second as $(u_0,u_1,\ldots,u_{4k-2},u_{4k-1})$. An $\alpha$-valuation $a$ for this graph is given in \cite{Abr} for $k \geq 4$:
    \[a(v_i)=
    \begin{cases}
        8k-\frac{i}{2} & \text{if } i \text{ even, } 0 \leq i \leq 2k-{2}\\
        \frac{i-1}{2} & \text{if } i \text{ odd, } 1 \leq i \leq 2k-1\\
        5k+1 & \text{if } i = 2k\\
        8k-\frac{i}{2}+1 & \text{if } i \text{ even, } 2k+{2} \leq i \leq 4k-{2}\\
        \frac{i-1}{2} & \text{if } i \text{ odd, } 2k+1 \leq i \leq 4k-1\\
    \end{cases}\]
    \[a(u_i)=
    \begin{cases}
        6k+1-\frac{i}{2} & \text{if } i \text{ even, } 0 \leq i \leq 2k-2\\
        2k+1+\frac{i-1}{2} & \text{if } i \text{ odd, } 1 \leq i \leq 2k-1\\
        5k & \text{if } i = 2k\\
        6k-\frac{i}{2} & \text{if } i \text{ even, } 2k+2 \leq i \leq 4k-2\\
        2k+1+\frac{i-1}{2} & \text{if } i \text{ odd, } 2k+1 \leq i \leq 4k-1\\
    \end{cases}\]
    We will show that this can be used to obtain an oriented near $\alpha$-valuation for the oriented $2C_{4k}$ in which the orientation of each cycle is  clockwise, as in the CEDF definition. The bipartition property holds since this is an $\alpha$-valuation.

We first evaluate the edge-labels arising from the natural orientation induced by $a$:
We split this into cases:
\begin{itemize}
    \item [(i)] $\cup_{\substack{i \text{ even} \\ 2 \leq i \leq 2k-2}}\{a(v_i)-a(v_{i-1})\} = \cup_{\substack{i \text{ even} \\ 2 \leq i \leq 2k-2}}\{8k-\frac{i}{2}-\frac{(i-1)-1}{2}\}=\cup_{\substack{i \text{ even} \\ 2 \leq i \leq 2k-2}}\{8k-i+1\}=\{8k-1,8k-3,\ldots, 6k+5,6k+3\}$
    \item [(ii)]$\cup_{\substack{i \text{ even} \\ 0 \leq i \leq 2k-2}}\{a(v_i)-a(v_{i+1})\} = \cup_{\substack{i \text{ even} \\ 0 \leq i \leq 2k-2}}\{8k-\frac{i}{2}-\frac{(i+1)-1}{2}\}=\cup_{\substack{i \text{ even} \\ 0 \leq i \leq 2k-2}}\{8k-i\}=\{8k,8k-2,\ldots, 6k+4,6k+2\}$
    \item [(iii)]$a(v_{2k})-a(v_{2k-1})=5k+1-\frac{(2k-1)-1}{2}= 5k+1-k+1=\{4k+2\}$
    \item [(iv)]$a(v_{2k})-a(v_{2k+1})=5k+1-\frac{(2k+1)-1}{2}= 5k+1-k=\{4k+1\}$
    \item [(v)]$\cup_{\substack{i \text{ even} \\ 2k+2 \leq i \leq 4k-2}}\{a(v_i)-a(v_{i-1})\} = \cup_{\substack{i \text{ even} \\ 2k+2 \leq i \leq 4k-2}}\{8k-\frac{i}{2}+1-\frac{(i-1)-1}{2}\}=\cup_{\substack{i \text{ even} \\ 2k+2 \leq i \leq 4k-2}}\{8k-i+2\}=\{6k,6k-2,\ldots, 4k+6,4k+4\}$
    \item [(vi)]$\cup_{\substack{i \text{ even} \\ 2k+2 \leq i \leq 4k-2}}\{a(v_i)-a(v_{i+1})\} = \cup_{\substack{i \text{ even} \\ 2k+2 \leq i \leq 4k-2}}\{8k-\frac{i}{2}+1-\frac{(i+1)-1}{2}\}=\cup_{\substack{i \text{ even} \\ 2k+2 \leq i \leq 4k-2}}\{8k-i+1\}=\{6k-1,6k-3,\ldots, 4k+5,4k+3\}$
    \item [(vii)]$a(v_{0})-a(v_{4k-1})=8k-\frac{(4k-1)-1}{2}= \{6k+1\}$
\end{itemize}
Repeating the same process for $a(u_i)$'s we obtain the differences:
    \begin{itemize}
        \item [(viii)]$\{4k-1,4k-3,\ldots, 2k+5,2k+3\}$
        \item [(ix)]$\{4k,4k-2,\ldots,2k+4,2k+2\}$
        \item [(x)]$\{2k\}$
        \item [(xi)]$\{2k-1\}$
        \item [(xii)]$\{2k-2,2k-4,\ldots,4,2\}$
        \item [(xiii)] $\{2k-3,2k-5,\ldots,3,1\}$
        \item [(xiv)]$\{2k+1\}$
    \end{itemize}
We see that every non-zero element in $\{0,\ldots, 8k\}$ (which we will view as $\mathbb{Z}_{8k+1}$) appears precisely once as an edge-label.
We now consider orienting the edges of the two cycles in a clockwise direction. The edges currently not clockwise-oriented are precisely those with labels in cases (ii), (iv), (vi), (ix), (xi) and (xiii).

Observe that:
\begin{itemize}
\item $-\{8k,8k-2,\ldots, 6k+4,6k+2\} \equiv \{2k-3,2k-5,\ldots,3,1\} \cup \{2k-1\} \mod 8k+1$;
\item $-\{4k,4k-2,\ldots,2k+4,2k+2\} \equiv \{6k-1,6k-3,\ldots, 4k+5,4k+3\} \cup \{4k+1\} \mod 8k+1$.
\end{itemize}
Thus, when we reverse the orientation of the edges in the cases above, every edge is now clockwise-oriented and we still obtain each element in $\mathbb{Z}_{8k+1}$. By Lemma \ref{flip} $a$ is an oriented near $\alpha$-valuation for the clockwise orientation of $2C_{4k}$.
\end{example}

\begin{example}\label{ex:2C4ksmall}
In \cite{Abr}, $\alpha$-valuations are given for $2C_4,2C_8$ and $2C_{12}$ ($2C_{4k}$ with $1 \leq k \leq 3$).  The disjoint cycles are labelled as follows:
\begin{itemize}
\item[(i)] $2C_4$: $(0,8,1,6), (3,7,4,5)$;
\item[(ii)] $2C_8$: $(0,15,1,11,2,14,3,16),(5,12,6,10,7,9,8,13)$;
\item[(iii)] $2C_{12}$: $(0, 23, 1, 22, 2, 16, 3, 21, 4, 20, 5, 24), (7, 18, 8, 17,9, 15, 10, 14, 11, 13, 12, 19)$.
\end{itemize}
In Case (i), the differences under the natural orientation are $\{8,7,5,6\} \cup\{4,3,1,2\}$; reversing the orientation of every other (alternate) edge, the orientation is clockwise and the differences are $\{8,2,5,3\} \cup \{4,6,1,7\}=[1,8]$ modulo $9$.  Cases (ii) and (iii) are similar modulo $17$ and $25$ respectively.  Hence these are oriented near $\alpha$-valuations for digraphs $2C_4,2C_8$ and $2C_{12}$ with the clockwise orientation.
\end{example}

We now consider the case when the cycle lengths are congruent to $2$ modulo $4$.  Here we use an $\alpha$-valuation from \cite{Kot}.

\begin{example}\label{ex:2C4k+2}
Consider two disjoint cycles, each of length $4k+2$, we denote this $2C_{4k+2}$. We denote the first cycle by $(v_0,v_1,\ldots,v_{4k},v_{4k+1})$ and the second by $(u_0,u_1,\ldots,u_{4k},u_{4k+1})$. An $\alpha$-valuation $a$ for this graph is given in \cite{Kot} for $k \geq 2$:
    \[a(v_i)=
    \begin{cases}
        2k+2+\frac{i}{2} & \text{if } i \text{ even } 0 \leq i \leq 2k-2\\
        6k+3-\frac{i-1}{2} & \text{if } i \text{ odd } 1 \leq i \leq 2k-3\\
        6k+3+\frac{i+1}{2} & \text{if } i \text{ odd } 2k-1 \leq i \leq 4k+1\\
        2k-\frac{i}{2} & \text{if } i \text{ even } 2k \leq i \leq 4k\\
    \end{cases}\]
    \[a(u_i)=
    \begin{cases}
        5k+4-\frac{i}{2} & \text{if } i \text{ even } 0 \leq i \leq 2k+2\\
        3k+2+\frac{i-1}{2} & \text{if } i \text{ odd } 1 \leq i \leq 2k+1\\
        3k+1-\frac{i-1}{2} & \text{if } i \text{ odd } 2k+3 \leq i \leq 4k+1\\
        5k+2+\frac{i}{2} & \text{if } i \text{ even } 2k+4 \leq i \leq 4k\\
    \end{cases}\]
    \begin{itemize}
    \item [(i)] $\cup_{\substack{i \text{ odd} \\ 1 \leq i \leq 2k-3}}\{a(v_i)-a(v_{i-1})\} =\{4k+1,4k-1,4k-3, \ldots 2k+9,2k+7,2k+5\}$
    \item [(ii)]$\cup_{\substack{i \text{ odd } \\ 1 \leq i \leq 2k-3}}\{a(v_i)-a(v_{i+1})\} =\{4k,4k-2,4k-4,\ldots, 2k+6,2k+4\}$
    \item [(iii)]$a(v_{2k-1})-a(v_{2k-2})=6k+3+\frac{(2k-1)+1}{2}-(2k+2+\frac{2k-2}{2})=\{4k+2\}$
    \item [(iv)]$\cup_{\substack{i \text{ odd} \\ 2k+1 \leq i \leq 4k+1}}\{a(v_i)-a(v_{i-1})\} =\{6k+4,6k+6,\ldots, 8k+2,8k+4\}$
    \item [(v)]$\cup_{\substack{i \text{ odd} \\ 2k-1 \leq i \leq 4k-1}}\{a(v_i)-a(v_{i+1})\} = \{6k+3,6k+5,\ldots, 8k+1,8k+3\}$
    \item [(vi)]$a(v_{4k+1})-a(v_{0})=6k+3+\frac{(4k+1)+1}{2} -(2k+2)= \{6k+2\}$
\end{itemize}
Repeating this process for $a(u_i)$'s, we obtain the differences:
    \begin{itemize}
        \item [(vii)]$\{1,3,\ldots, 2k-1,2k+1\}$
        \item [(viii)]$\{2,4,\ldots, 2k,2k+2\}$
        \item [(ix)]$\{2k+3\}$
        \item [(x)]$\{6k,6k-2,\ldots,4k+6,4k+4\}$
        \item [(xi)]$\{6k+1,\ldots 4k+7,4k+5\}$
        \item [(xii)]$\{4k+3\}$
    \end{itemize}
    We see that every non-zero element in $\{0,\ldots, 8k+4\}$ (which we will view as $\mathbb{Z}_{8k+5}$) appears precisely once as an edge-label.
    We now consider orienting the edges of the two cycles in a clockwise direction. The edges currently not clockwise-oriented are precisely those in cases (ii), (v), (vi), (viii), (ix) and (xi).

Observe that:
\begin{itemize}
\item $-\{4k,4k-2, \ldots, 2k+4\} \equiv \{6k+1,\ldots,4k+7,4k+5\} \mod 8k+5$
\item $-\{6k+3,6k+5,\ldots, 8k+1,8k+3\} \equiv \{2,4,\ldots, 2k,2k+2\} \mod 8k+5$
\item $-(6k+2) \equiv 2k+3 \mod 8k+5$
\end{itemize}
Thus, when we reverse the orientation of the edges above, every edge is now clockwise-oriented and we still obtain each element in $\mathbb{Z}_{8k+5}$. By Lemma \ref{flip} $a$ is an oriented near $\alpha$-valuation for the clockwise orientation of $2C_{4k+2}$.
\end{example}

\begin{example}\label{ex:2C6}
In \cite{Kot}, an $\alpha$-valuation is given for $2C_6$.  This leads to the following oriented near $\alpha$-valuation for the clockwise orientation of $2C_6$: $(0,11,1,10,4,12), (5,9,2,7,6,8)$.
\end{example}

This leads to the following theorem on $2$-CEDFs:
\begin{theorem}\label{thm:2CEDF}
Let $k,l \geq 1$.  Then there exists an $(4kl^2+1,4k,l,1)$-$2$-CEDF in $\mathbb{Z}_{4kl^2+1}$.
\end{theorem}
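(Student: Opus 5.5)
The plan is to realise the $2$-CEDF as a digraph-defined EDF for the clockwise-oriented disjoint union $2C_{2k}$, and then to apply Theorem \ref{thm:EDFfromorblowup}. The $2C_{2k}$ has $|V|=4k$ vertices and $|E|=4k$ edges. So if it carries an oriented near $\alpha$-valuation, the theorem yields a $(4kl^2+1,4k,l,1;2C_{2k})$-EDF in $\mathbb{Z}_{4kl^2+1}$.

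The oriented near $\alpha$-valuations come from the examples, split according to $k$ modulo $2$:
\begin{itemize}
\item If $k=2k'$ is even, the cycles have length $4k'$. Example \ref{ex:2C4k} covers $k'\geq 4$ and Example \ref{ex:2C4ksmall} covers $1\leq k'\leq 3$.
\item If $k=2k'+1$ is odd, the cycles have length $4k'+2$. Example \ref{ex:2C4k+2} covers $k'\geq 2$ and Example \ref{ex:2C6} covers $k'=1$.
\end{itemize}
The only remaining case is $k=1$, i.e. $2C_2$. This is degenerate: a $2$-cycle is not a simple graph, and in fact the $2$-CEDF digraph for $m=4$ consists of the edges $(A_0,A_2),(A_2,A_0),(A_1,A_3),(A_3,A_1)$. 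I would treat it directly. A natural attempt is an oriented near $\alpha$-valuation on the multidigraph, or an explicit construction in $\mathbb{Z}_{4l^2+1}$ such as $A_0=\{0,\ldots,l-1\}$, $A_2=\{jl: \ldots\}$-type sets. One could also simply check that the blow-up proof of Theorems \ref{orblowlarge} and \ref{orblowsmall} never uses simplicity. For example, on $\mathbb{Z}_5$ the labels $0,1$ on one $2$-cycle and $2,4$ on the other give differences $\pm1,\pm2$, which is all of $\mathbb{Z}_5\setminus\{0\}$. The bipartition condition holds trivially, since each pair has one small and one large vertex.

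Next comes the translation to a $2$-CEDF. I index the sets so that the first cycle $(v_0,\ldots,v_{2k-1})$ occupies the even positions $A_{2i}=V_{v_i}$, and the second cycle occupies the odd positions $A_{2i+1}=V_{u_i}$. Then the clockwise edges $(v_i,v_{i+1})$ and $(u_i,u_{i+1})$ correspond exactly to the pairs $(A_j,A_{j+2})$, with indices taken mod $4k$. By Lemma \ref{lemma:CEDF} with $c=2$ and $d=\gcd(2,4k)=2$, the $2$-CEDF multiset is the union of the two cycle unions. The EDF condition $\bigcup\Delta(A_{j+2},A_j)=\mathbb{Z}_{4kl^2+1}\setminus\{0\}$ is then precisely the $2$-CEDF condition with $\lambda=1$. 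The sets are disjoint and of size $l$, because the blow-up labelling $p'$ is injective.

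I expect the main obstacle to be bookkeeping rather than new mathematics. First, the ranges of the examples must exactly tile all $k\ge1$. Second, the degenerate $k=1$ case needs separate justification. Third, the edge orientation and difference convention in Theorem \ref{thm:EDFfromorblowup}, namely $\Delta(V_v,V_u)$ for the edge $(u,v)$, must match the convention $\Delta(A_{i+c},A_i)$ in the CEDF definition. Clockwise orientation was chosen precisely so that these align.
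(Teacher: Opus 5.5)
Your proposal is correct and is essentially the paper's proof. Take the oriented near $\alpha$-valuations of the clockwise $2C_{2k}$ from Examples \ref{ex:2C4k}, \ref{ex:2C4ksmall}, \ref{ex:2C4k+2} and \ref{ex:2C6}, blow up via Theorem \ref{thm:EDFfromorblowup}, and interleave the two cycles' sets. Your indexing, with $2k$ sets per cycle placed in the even and odd positions, is the right one.

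Your separate treatment of $k=1$ is genuinely needed. Those examples only cover cycle lengths $\geq 4$, and the paper's proof passes over the degenerate $2C_2$ case. Your $\mathbb{Z}_5$ labelling $(0,1),(2,4)$ does blow up correctly, since the blow-up proofs never use simplicity. It gives $A_0=\{0,1,\ldots,l-1\}$, $A_2=\{l,2l,\ldots,l^2\}$, $A_1=\{2l^2,\ldots,2l^2+l-1\}$ and $A_3=\{3l^2+l,3l^2+2l,\ldots,4l^2\}$ in $\mathbb{Z}_{4l^2+1}$. Here $\Delta(A_2,A_0)=[1,l^2]$ and $\Delta(A_3,A_1)=[l^2+1,2l^2]$, and their negatives fill $[2l^2+1,4l^2]$.
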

\begin{proof}
Let $2C_{2k}$ be the disjoint union of two cycles of length $2k$, and let $2C_{2k}^*$ be the oriented $2C_{2k}$ with both oriented cycles $C$ and $C'$  having clockwise orientation. Then by Example \ref{ex:2C4k}, Example \ref{ex:2C4ksmall}, Example \ref{ex:2C4k+2} and Example \ref{ex:2C6}, there exists an oriented near $\alpha$-valuation for $2C_{2k}^*$.  Applying the blow-up construction of Theorem \ref{thm:EDFfromorblowup} yields an $(4kl^2 +1,4k,l,1;2C_{2k}^*)$-EDF in $\mathbb{Z}_{4kl^2+1}$ with ordered sets $(A_1,\ldots,A_k)$ corresponding to $C$, and $(B_1,B_2,\ldots,B_k)$ corresponding to $C'$.  Interleaving these as $(A_1,B_1,A_2,B_2,\ldots,A_k,B_k)$ gives the required $(4kl^2+1,4k,l,1)$-$2$-CEDF in $\mathbb{Z}_{4kl^2+1}$.
\end{proof}

\begin{example}
We demonstrate how the oriented near $\alpha$-valuation for the clockwise-oriented $2C_4$ (which we denote $2C_4^*)$ given in Example \ref{ex:2C4ksmall} yields a $(73,8,3,1;2C_4^*)$-EDF, i.e. a $(73,8,3,1)$-$2$-CEDF, in $\mathbb{Z}_{73}$.  We apply Theorem \ref{thm:2CEDF}.  The cycle $(0,8,1,6)$ gives ordered sets $(A_1,A_2,A_3,A_4)$ where
$A_1=\{0,3,6\}$, $A_2=\{70,71,72\}$, $A_3=\{9,12,15\}$ and $A_4=\{52,53,54\}$. Clockwise pairwise differences are $[64,72], [10,18], [37,45]$ and $[19,27]$. The cycle $(3,7,4,5)$ gives ordered sets $(B_1,B_2,B_3,B_4)$ where
$B_1=\{27,30,33\}$, $B_2=\{61,62,63\}$, $B_3= \{36,39,42\}$ and $B_4=\{43,44,45\}$. Clockwise pairwise differences are $[28,36],[46,54],[1,9]$ and $[55,63]$.  Interleaving these sets gives the $(73,8,3,1)$-$2$-CEDF $(A_1,B_1,A_2,B_2,A_3,B_3,A_4,B_4)$.
\end{example}

Another family of graphs related to cycles is the family of sun graphs.  A sun graph $S_m$ ($m$ even) is a cycle of length $\frac{m}{2}$ with a pendant edge connected to each vertex of the cycle.  In the Appendix, we give a new $\alpha$-valuation for sun graphs, which is also an oriented $\alpha$-valuation for what we will call a semi-directed sun digraph - a unidirectional oriented cycle with pendant edges oriented alternately inward and outward. We can use this to create digraph defined EDFs as follows (proof omitted).

\begin{theorem}
    Let $k,l \geq 1$. Let $S_{8k}^*$ be the semi-directed sun digraph with $8k$ vertices. Then there exists a $(8kl^2+1,8k,l,1;S_{8k}^*)-EDF$ in $\mathbb{Z}_{8kl^2+1}$.
\end{theorem}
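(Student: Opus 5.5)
The plan is to reduce the statement to Theorem \ref{thm:EDFfromorblowup}, in the same way that Theorem \ref{thm:2CEDF} was obtained for two cycles. The sun graph $S_{8k}$ consists of a cycle of length $4k$ together with $4k$ pendant edges. Hence it has $|V|=8k$ vertices and $|E|=8k$ edges. If we can exhibit an oriented near $\alpha$-valuation $p$ of the semi-directed sun digraph $S_{8k}^*$, then Theorem \ref{thm:EDFfromorblowup} immediately gives an $(8kl^2+1,8k,l,1;S_{8k}^*)$-EDF in $\mathbb{Z}_{8kl^2+1}$. The sets are $A_u=\{p'(x):x\in V_u\}$, where $p'$ is the blown-up valuation on $S_{8k}^*\cdot K_l^c$ supplied by Theorem \ref{lexiorblow}. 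So the whole task is to produce $p$.

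To produce $p$, I would start from the $\alpha$-valuation $a$ of the undirected sun graph $S_{8k}$ given in the Appendix. Since the cycle has length $4k\equiv 0 \bmod 4$, the natural starting point is the cycle labelling of Theorem \ref{cyc}, with the $4k$ pendant labels interleaved into the two halves $V_{small}$ and $V_{large}$ so that the $8k$ edge-labels fill $\{1,\ldots,8k\}$.

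By Theorem \ref{thm:nearprops}(ii), $a$ is a near $\alpha$-valuation. The bipartition condition in the definition of an oriented near $\alpha$-valuation depends only on the labels and the underlying graph, not on the orientation. Therefore it holds for every orientation of $S_{8k}$, and only the difference condition modulo $8k+1$ needs checking for $S_{8k}^*$.

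For the difference condition, I would first list the edge-labels under the natural orientation $S_{8k}^a$, grouped into arithmetic progressions by parity of position and by which half of the cycle or which pendants they belong to, exactly as in Examples \ref{ex:2C4k} and \ref{ex:2C4k+2}. I would then identify the set $R$ of edges whose natural orientation disagrees with $S_{8k}^*$. These are, roughly, every other cycle edge and every other pendant edge, i.e. those running from a large vertex back to a small one in the clockwise sense, and the pendants that should point inward at large vertices or outward at small ones.

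The key step is to show that the multiset of labels on $R$ is closed under negation modulo $8k+1$ and that $R$ splits into pairs with labels $e$ and $-e$. Equivalently, the labels of $R$ and the labels of the non-reversed edges with negated values must match up correctly. Once this is established, Lemma \ref{flip} (or Lemma \ref{selfflip}) shows that reversing all of $R$ preserves the property that every nonzero residue appears exactly once. This gives the required oriented near $\alpha$-valuation of $S_{8k}^*$.

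I expect the main obstacle to be this pairing verification. It requires explicit bookkeeping of the progressions modulo $8k+1$, including the exceptional edges at the point where the cycle labelling switches formula (the $m+1-\frac i2$ versus $m-\frac i2$ jump) and the edge closing the cycle. The first thing I would try is to choose the pendant labels so that reversed pendant labels are the negatives of reversed cycle labels, mirroring the identities $-\{8k,\ldots,6k+2\}\equiv\{2k-3,\ldots,1\}\cup\{2k-1\}$ in Example \ref{ex:2C4k}. I would also check small cases, $k=1$ and $k=2$, by hand first.
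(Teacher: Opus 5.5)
Your reduction is the route the paper intends. Its proof is omitted, but the paper asserts that the Appendix $\alpha$-valuation $a$ is also an oriented one for $S_{8k}^*$. The steps are: reorient $a$ via Lemma \ref{flip}, note that the bipartition condition does not depend on orientation, and apply Theorem \ref{thm:EDFfromorblowup} with $|E|=|V|=8k$.

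The proposal is incomplete at exactly the step that carries the content, and your guess at the reversed set $R$ is wrong. Since $v_i$ is small for even $i$ and large for odd $i$, the natural orientation already sends each pendant edge outward at even $i$ and inward at odd $i$. So the pendants alternate for free and none needs reversing. Reversing every other pendant, as you suggest, would destroy that alternation and also break the label count. The pendant labels at $v_0$ and $v_{4k-1}$ are $8k$ and $1\equiv -8k$. Those at $v_{2k}$ and $v_{2k-1}$ are $2k+1$ and $6k\equiv -(2k+1) \pmod{8k+1}$. These negation pairs straddle the two parity classes.

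With pendants outward at even $i$, $R$ is exactly the set of cycle edges $\{v_i,v_{i+1}\}$ with $i$ odd (indices mod $4k$). From the Appendix formulas, their labels $a(v_i)-a(v_{i+1})$ are
\[
8k-1-i \ (1\le i\le 2k-3),\qquad 4k+1\ (i=2k-1),\qquad 4k-i\ (2k+1\le i\le 4k-3),\qquad 4k\ (i=4k-1),
\]
that is, $\{6k+2,6k+4,\ldots,8k-2\}\cup\{3,5,\ldots,2k-1\}\cup\{4k,4k+1\}$. Since $-(6k+2j)\equiv 2k+1-2j$ and $-(4k+1)\equiv 4k \pmod{8k+1}$, this set is closed under negation. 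So Lemma \ref{flip} applies, and reversing $R$ yields the clockwise cycle with alternating pendants. The full pendant label set and the even-indexed cycle label set are also negation-closed, so the other conventions for $S_{8k}^*$ work as well.

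With this check inserted, your argument is complete. There is no need to re-choose pendant labels. Your description of $a$ as the Theorem \ref{cyc} cycle labelling with pendants interleaved is inaccurate, since that labelling only uses $\{0,\ldots,4k\}$, but nothing in the argument depends on it.
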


\subsection{Ladder graphs}
Finally, we present EDFs from ladder graphs with the ``natural" orientation.

\begin{example}
Denote by $L_{2k+1}$ the ladder graph with $2k+1$ rungs, vertices $\{u_i,v_i: 0 \leq i \leq 2k\}$ and edges $\{\{u_i,v_i\}: 0 \leq i \leq 2k\}\cup \{\{u_i,u_{i+1}\}: 0 \leq i \leq 2k-1\}\cup \{\{v_i,v_{i+1}\}: 0 \leq i \leq 2k-1\}$. 
The following $\alpha$-valuation is given in \cite{Maheo}:
\begin{itemize}
\item $a(u_{2i})=2k+i$, $a(v_{2i})=6k+1-i$  ($0 \leq i \leq k$);
\item $a(u_{2i+1})=4k-i$, $a(v_{2i+1})=i$ ($0 \leq i \leq k-1$).
\end{itemize}
We will show that this can be adapted to obtain an oriented near $\alpha$-valuation with the orientation of the ladder ``standardised" to be left-to-right and bottom-to-top (working in $\mathbb{Z}_{6k+2}$). The bipartition property holds since this is an $\alpha$-valuation.

We first evaluate the edge-labels arising from the natural orientation induced by $a$:
    \begin{center}
    \begin{tikzpicture}[baseline=4mm,scale=0.66]
    \begin{scope}[radius=1mm]
    \fill(0,2) circle;
    \fill(0,-1) circle; 
    \fill(6,-1) circle;
    \fill(6,2) circle; 
    \fill(3,-1) circle;
    \fill(3,2) circle;
    \fill(9,-1) circle;
    \fill(9,2) circle;
    \fill(14,-1) circle;
    \fill(14,2) circle;
    \fill(17,-1) circle;
    \fill(17,2) circle;
    
    \node at (17,2.5) {$3k$};
    \node at (17,-1.5) {$5k+1$};
    \node at (14,2.5) {$3k+1$};
    \node at (14,-1.5) {$k-1$};
    \node at (9,2.5) {$4k-1$};
    \node at (9,-1.5) {$1$};
    \node at (6,2.5) {$2k+1$};
    \node at (6,-1.5) {$6k$};
    \node at (3,2.5) {$4k$};
    \node at (3,-1.5) {$0$};
    \node at (0,2.5) {$2k$};
    \node at (0,-1.5) {$6k+1$};
    \end{scope}
    \begin{scope}[thick]
    \path [draw=black,postaction={on each segment={mid arrow=black}}](0,2)--(3,2);
    \path [draw=black,postaction={on each segment={mid arrow=black}}](3,-1)--(0,-1);
    \path [draw=black,postaction={on each segment={mid arrow=black}}](6,2)--(3,2);
    \path [draw=black,postaction={on each segment={mid arrow=black}}](3,-1)--(6,-1);
    \path [draw=black,postaction={on each segment={mid arrow=black}}](6,2)--(9,2);
    \path [draw=black,postaction={on each segment={mid arrow=black}}](9,-1)--(6,-1);
    \path [draw=black,postaction={on each segment={mid arrow=black}}](17,2)--(14,2);
    \path [draw=black,postaction={on each segment={mid arrow=black}}](14,-1)--(17,-1);
    
    \path [draw=black,postaction={on each segment={mid arrow=black}}](10,2)--(9,2);
    \path [draw=black,postaction={on each segment={mid arrow=black}}](9,-1)--(10,-1);
    
    \path [draw=black,postaction={on each segment={mid arrow=black}}](13,2)--(14,2);
    \path [draw=black,postaction={on each segment={mid arrow=black}}](14,-1)--(13,-1);
    
    \path [draw=black,postaction={on each segment={mid arrow=black}}](0,2)--(0,-1);
    \path [draw=black,postaction={on each segment={mid arrow=black}}](3,-1)--(3,2);
    \path [draw=black,postaction={on each segment={mid arrow=black}}](6,2)--(6,-1);
    \path [draw=black,postaction={on each segment={mid arrow=black}}](9,-1)--(9,2);
    \path [draw=black,postaction={on each segment={mid arrow=black}}](14,-1)--(14,2);
    \path [draw=black,postaction={on each segment={mid arrow=black}}](17,2)--(17,-1);

    \end{scope}
    \end{tikzpicture}
    \end{center}

We split this into cases:
\begin{itemize}
\item[(i)] $\bigcup_{0 \leq i \leq k}\{v_{2i}-u_{2i}\}=\bigcup_{0 \leq i \leq k}\{6k+1-i-(2k+i)\}=\bigcup_{0 \leq i \leq k}\{4k-2i+1\}\\
=\{4k+1,4k-1,4k-3,\ldots, 2k+5,2k+3,2k+1\}$.
\item[(ii)] $\bigcup_{0 \leq i \leq k-1}\{u_{2i+1}-v_{2i+1}\}=\bigcup_{0 \leq i \leq k-1}\{4k-i-i\}=\bigcup_{0 \leq i \leq k-1}\{4k-2i\}\\=\{4k,4k-2,4k-4,\ldots, 2k+6,2k+4,2k+2\}$.
\item[(iii)] $\bigcup_{0 \leq i \leq k-1}\{v_{2i}-v_{2i+1}\}=\bigcup_{0 \leq i \leq k-1}\{6k+1-i-i\}=\bigcup_{0 \leq i \leq k-1}\{6k-2i+1\}\\
=\{6k+1,6k-1,6k-3,\ldots, 4k+7,4k+5,4k+3\}$.
\item[(iv)] $\bigcup_{0 \leq i \leq k-1}\{u_{2i+1}-u_{2i}\}=\bigcup_{0 \leq i \leq k-1}\{4k-i-(2k+i)\}=\bigcup_{0 \leq i \leq k-1}\{2k-2i\}\\
=\{2k,2k-2,2k-4,\ldots, 6,4,2\}$.
\item[(v)]$\bigcup_{1 \leq i \leq k}\{v_{2i}-v_{2(i-1)+1}\}=\bigcup_{1 \leq i \leq k}\{6k+1-i-(i-1)\}=\bigcup_{1 \leq i \leq k}\{6k-2i+2\}\\
=\{6k,6k-2,6k-4,\ldots, 4k+6,4k+4,4k+2\}$.
\item[(vi)] $\bigcup_{1 \leq i \leq k}\{u_{2(i-1)+1}-u_{2i}\}=\bigcup_{1 \leq i \leq k}\{4k-(i-1)-(2k+i)\}=\bigcup_{1 \leq i \leq k}\{2k-2i+1\}\\
=\{2k-1,2k-3,2k-5,\ldots, 5,3,1\}$.
\end{itemize} 
We see that every non-zero element in $\mathbb{Z}_{6k+2}$ appears precisely once as an edge-label.
    
We now consider orienting the edges of the graph from left-to-right and bottom-to-top:
    \begin{center}
    \begin{tikzpicture}[baseline=4mm,scale=0.66]
    \begin{scope}[radius=1mm]
    \fill(0,2) circle;
    \fill(0,-1) circle; 
    \fill(6,-1) circle;
    \fill(6,2) circle; 
    \fill(3,-1) circle;
    \fill(3,2) circle;
    \fill(9,-1) circle;
    \fill(9,2) circle;
    \fill(14,-1) circle;
    \fill(14,2) circle;
    \fill(17,-1) circle;
    \fill(17,2) circle;
    
    \node at (17,2.5) {$u_{2k}$};
    \node at (17,-1.5) {$v_{2k}$};
    \node at (14,2.5) {$u_{2k-1}$};
    \node at (14,-1.5) {$v_{2k-1}$};
    \node at (9,2.5) {$u_{3}$};
    \node at (9,-1.5) {$v_{3}$};
    \node at (6,2.5) {$u_{2}$};
    \node at (6,-1.5) {$v_{2}$};
    \node at (3,2.5) {$u_{1}$};
    \node at (3,-1.5) {$v_{1}$};
    \node at (0,2.5) {$u_{0}$};
    \node at (0,-1.5) {$v_{0}$};
    \end{scope}
    \begin{scope}[thick]
    \path [draw=black,postaction={on each segment={mid arrow=black}}](0,2)--(3,2);
    \path [draw=black,postaction={on each segment={mid arrow=black}}](0,-1)--(3,-1);
    \path [draw=black,postaction={on each segment={mid arrow=black}}](3,2)--(6,2);
    \path [draw=black,postaction={on each segment={mid arrow=black}}](3,-1)--(6,-1);
    \path [draw=black,postaction={on each segment={mid arrow=black}}](6,2)--(9,2);
    \path [draw=black,postaction={on each segment={mid arrow=black}}](6,-1)--(9,-1);
    \path [draw=black,postaction={on each segment={mid arrow=black}}](14,2)--(17,2);
    \path [draw=black,postaction={on each segment={mid arrow=black}}](14,-1)--(17,-1);
    
    \path [draw=black,postaction={on each segment={mid arrow=black}}](9,2)--(10,2);
    \path [draw=black,postaction={on each segment={mid arrow=black}}](9,-1)--(10,-1);
    
    \path [draw=black,postaction={on each segment={mid arrow=black}}](13,2)--(14,2);
    \path [draw=black,postaction={on each segment={mid arrow=black}}](13,-1)--(14,-1);
    
    \path [draw=black,postaction={on each segment={mid arrow=black}}](0,-1)--(0,2);
    \path [draw=black,postaction={on each segment={mid arrow=black}}](3,-1)--(3,2);
    \path [draw=black,postaction={on each segment={mid arrow=black}}](6,-1)--(6,2);
    \path [draw=black,postaction={on each segment={mid arrow=black}}](9,-1)--(9,2);
    \path [draw=black,postaction={on each segment={mid arrow=black}}](14,-1)--(14,2);
    \path [draw=black,postaction={on each segment={mid arrow=black}}](17,-1)--(17,2);

    \end{scope}
    \end{tikzpicture}
    \end{center}

Note that, 
\begin{itemize}
\item for the alternate vertical edges from case (i):
$\{4k+1,4k-1,\ldots,2k+3,2k+1\} \equiv -\{4k+1,4k-1,\ldots,2k+3,2k+1\} \mod 6k+2$;
\item for alternate horizontal edges from cases (iii) and (vi): $\{6k+1,6k-1,6k-3,\ldots, 4k+7,4k+5,4k+3\} \equiv -\{2k-1,2k-3,\ldots,3,1\} \mod 6k+2$.
\end{itemize}
Hence, applying Lemma \ref{flip}, $a$ is also an oriented $\alpha$-valuation for the desired orientation.
\end{example}

\begin{theorem}
Let $k,l \geq 1$. Let $L_{2k+1}$ be the ladder graph with $2k+1$ rungs, and let $L_{2k+1}^*$ be the directed $L_{2k+1}$ with left-to-right and bottom-to-top orientation.  Then there exists a $((6k+1)l^2 +1,4k+2,l,1;L_{2k+1}^*)$-EDF in $\mathbb{Z}_{(6k+1)l^2 +1}$.
\end{theorem}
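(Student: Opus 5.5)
The proof will be a direct application of Theorem \ref{thm:EDFfromorblowup} to the digraph $L_{2k+1}^*$, using the oriented near $\alpha$-valuation built in the preceding example. First I record the counts. $L_{2k+1}$ has $2(2k+1)=4k+2$ vertices. Its edges are the $2k+1$ rungs together with $2\cdot 2k=4k$ horizontal edges, so $|E|=6k+1$. This matches the modulus $\mathbb{Z}_{6k+2}=\mathbb{Z}_{|E|+1}$ used in the example.

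Next I check that the labelling $a$ of \cite{Maheo} really is an oriented near $\alpha$-valuation of $L_{2k+1}^*$. The bipartition condition holds because $a$ is an $\alpha$-valuation: Theorem \ref{thm:nearprops}(ii) gives the sets $V_{small}$ and $V_{large}$, and this condition depends only on the underlying graph, not on the orientation.

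For the difference condition I start from the natural orientation, under which cases (i)--(vi) of the example give each element of $\mathbb{Z}_{6k+2}\setminus\{0\}$ exactly once. I then list the edges whose natural orientation disagrees with left-to-right/bottom-to-top. From the pictures these are the rungs at even positions and alternate horizontal edges. On each such edge I reverse the orientation, which replaces its difference $d$ by $-d$ modulo $6k+2$. The two congruences noted in the example show that the multiset of the differences being negated is mapped onto itself. Hence the full multiset of differences is unchanged, and it is still $\mathbb{Z}_{6k+2}\setminus\{0\}$. This is the mechanism of Lemma \ref{selfflip}/Lemma \ref{flip}: sets closed under negation, with each label appearing once, can be flipped wholesale.

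This bookkeeping is the main obstacle. For each reversed edge I must confirm that it belongs to one of the listed cases, and that no edge needs reversing outside those cases. The vertical rungs $\{u_{2i},v_{2i}\}$ have $a(v_{2i})>a(u_{2i})$, so they point down under the natural orientation and must be flipped. Their labels $\{4k+1-2i\}$ are closed under negation modulo $6k+2$, since $4k+1-2i+4k+1-2j\equiv 0$ exactly when $i+j=k$.

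The horizontal edges need more care. The edges $(v_{2i+1},v_{2i})$ from case (iii) and $(u_{2i},u_{2i+1})$ from case (vi) point right-to-left under the natural orientation; in the pictures, case (vi) concerns the edges $u_{2i-1}u_{2i}$. These two families have opposite-sign label sets: case (iii) is $\{6k+1,6k-1,\ldots,4k+3\}$ and case (vi) is $\{2k-1,\ldots,1\}$, each of size $k$. Flipping both families therefore swaps the two label sets, and the union is preserved. The remaining edges, from cases (ii), (iv) and (v), already point left-to-right or upward. I would verify this directly from the labels, for instance $a(u_{2i+1})>a(u_{2i})$ and $a(v_{2i})>a(v_{2i-1})$.

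Finally I apply Theorem \ref{thm:EDFfromorblowup} with $G=L_{2k+1}^*$ and $p=a$. This yields an $(|E|l^2+1,|V|,l,1;L_{2k+1}^*)$-EDF in $\mathbb{Z}_{|E|l^2+1}$. Substituting $|E|=6k+1$ and $|V|=4k+2$ gives the stated parameters $((6k+1)l^2+1,4k+2,l,1)$.

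The degenerate case $k=1$ needs no separate treatment, since the formulas still make sense there. Even so, I would check the three-rung ladder in $\mathbb{Z}_8$ explicitly as a sanity test.
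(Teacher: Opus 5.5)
Your proposal is correct and follows the paper's own route: the ladder example turns Maheo's $\alpha$-valuation into an oriented near $\alpha$-valuation of $L_{2k+1}^*$ by reversing the edges of cases (i), (iii) and (vi), whose combined label multiset is closed under negation modulo $6k+2$, and the theorem is then a direct application of Theorem~\ref{thm:EDFfromorblowup} with $|E|=6k+1$ and $|V|=4k+2$. One small indexing slip: the edges of case (vi) that need reversing are $\{u_{2i-1},u_{2i}\}$, naturally directed $u_{2i}\to u_{2i-1}$; the directed edge $(u_{2i},u_{2i+1})$ you name there already points left-to-right and belongs to case (iv), as your own later check $a(u_{2i+1})>a(u_{2i})$ confirms.
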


\begin{remark}
It is not always possible to apply this orientation-standardisation approach starting from the natural orientation of a given near $\alpha$-valuation.  An $\alpha$-valuation for the square lattice graphs is given in \cite{AchGil}, but our techniques cannot be applied to obtain an oriented near $\alpha$-valuation with a left-to-right bottom-to-top orientation; similarly, the $\alpha$-valuation given for symmetric trees in \cite{Rob} cannot be adjusted to obtain a root-down orientation. This is due to the presence of pairs of edges labelled $\{e,-e\}$, positioned in the graph in such a way that the orientations of both cannot be suitably reversed.
\end{remark}

\section{Further Work}
In this paper, we have explored how suitable vertex-labellings of graphs and digraphs can lead directly to constructions for digraph-directed EDFs in cyclic groups. We present some open questions, both about the labellings themselves and the digraph-defined EDFs.

\begin{itemize}
    \item Any graph which allows a near-$\alpha$ valuation must be bipartite.  Does every bipartite graph that possesses a $\beta$-valuation have a near $\alpha$-valuation? We have examples of $\beta$-valuations on bipartite graphs which are not near $\alpha$-valuations, but for each there exists a different vertex labelling which is a near $\alpha$-valuation.
    \item From Theorem \ref{nearalphaprod}, the weak tensor product of any two graphs with near-$\alpha$ valuations is a graph with a near $\alpha$-valuation. For two graphs which each possess a near $\alpha$ valuation but no $\alpha$-valuation, will their weak tensor product have no $\alpha$-valuation?
    \item In this paper, we have created digraph-defined EDFs with $\lambda = 1$ and - by taking union of the set of oriented edges and its reverse - in some cases $\lambda=2$.  Can the graph valuation/blow-up method be used to obtain such EDFs with $\lambda \neq 1,2$?
    \item Our method for obtaining digraph-defined EDFs requires all underlying graphs to be bipartite.  However, $\beta$-valuations can exist for non-bipartite graphs.  Can a similar ``valuation then blow-up" method be developed for non-bipartite graphs? 
\end{itemize}

\subsection*{Acknowledgements}
The work of the first author was supported by the London Mathematical Society grant URB-2025-54.

\section{Appendix: sun graphs}

In this Appendix, we present an $\alpha$-valuation for a class of graphs closely related to cycles, namely sun graphs $S_m$.  We believe this is a new contribution to the $\alpha$-valuation literature. 

\begin{definition}
    A \emph{sun graph} $S_m$ on $m$ vertices ($m$ even) is a cycle of length $\frac{m}{2}$ with a degree one vertex connected to each vertex of the cycle. 
\end{definition}

\begin{example}
    We give an example of $S_{24}$ with a near $\alpha$-valuation in Figure \ref{sg}. 
    
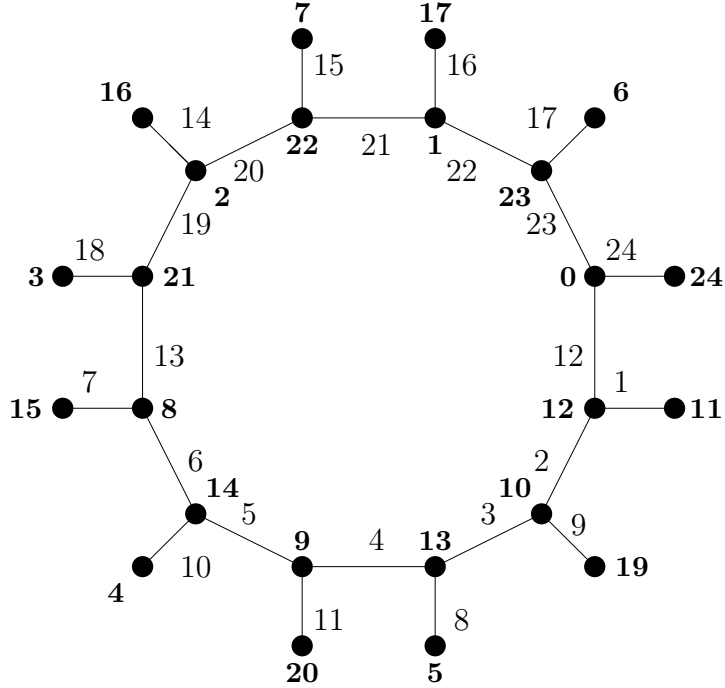
\begin{figure}\label{sg}
    \begin{center}
        \begin{tikzpicture}[baseline=4mm,scale=1.4]
            \begin{scope}[radius=1mm]
                \fill(7.5,5.75) circle;
                \fill(8,5.25) circle;
                \fill(9,5.75) circle;
                \fill(9,6.5) circle;
                \fill(10.25,6.5) circle;
                \fill(10.25,5.75) circle;
                \fill(11.25,5.25) circle;
                \fill(11.75,5.75) circle;
                \fill(12.5,4.25) circle;
                \fill(11.75,4.25) circle;
                \fill(11.75,3) circle;
                \fill(12.5,3) circle;
                \fill(11.25,2) circle;
                \fill(11.75,1.5) circle;
                \fill(10.25,1.5) circle;
                \fill(10.25,0.75) circle;
                \fill(9,0.75) circle;
                \fill(9,1.5) circle;
                \fill(7.5,1.5) circle;
                \fill(8,2) circle;
                \fill(6.75,3) circle ;
                \fill(6.75,4.25) circle;
                \fill(7.5,4.25) circle;
                \fill(7.5,3) circle;
            \end{scope}
            \begin{scope}[thick]
                \draw [line width=0.2pt] (7.5,4.25) -- (7.5,3);
                \draw [line width=0.2pt] (7.5,4.25) -- (8,5.25);
                \draw [line width=0.2pt] (8,5.25) -- (9,5.75);
                \draw [line width=0.2pt] (9,5.75) -- (10.25,5.75);
                \draw [line width=0.2pt] (10.25,5.75) -- (11.25,5.25);
                \draw [line width=0.2pt] (11.25,5.25) -- (11.75,4.25);
                \draw [line width=0.2pt] (11.75,4.25) -- (11.75,3);
                \draw [line width=0.2pt] (7.5,3) -- (8,2);
                \draw [line width=0.2pt] (8,2) -- (9,1.5);
                \draw [line width=0.2pt] (9,1.5) -- (10.25,1.5);
                \draw [line width=0.2pt] (10.25,1.5) -- (11.25,2);
                \draw [line width=0.2pt] (11.25,2) -- (11.75,3);
                \draw [ line width=0.2pt](9,5.75) to (9,6.5);
                \draw [ line width=0.2pt](10.25,5.75) to (10.25,6.5);
                \draw [ line width=0.2pt](11.25,5.25) to (11.75,5.75);
                \draw [ line width=0.2pt](11.75,4.25) to (12.5,4.25);
                \draw [ line width=0.2pt](11.75,3) to (12.5,3);
                \draw [ line width=0.2pt](11.25,2) to (11.75,1.5);
                \draw [ line width=0.2pt](10.25,1.5) to (10.25,0.75);
                \draw [ line width=0.2pt](9,1.5) to (9,0.75);
                \draw [ line width=0.2pt](8,2) to (7.5,1.5);
                \draw [ line width=0.2pt](7.5,3) to (6.75,3);
                \draw [ line width=0.2pt](7.5,4.25) to (6.75,4.25);
                \draw [ line width=0.2pt](8,5.25) to (7.75,5.5);
                \draw [ line width=0.2pt](8,5.25) to (7.5,5.75);
                \node [font=\small] at (9,6.75) {\textbf{7}};
                \node [font=\small] at (10.25,6.75) {\textbf{17}};
                \node [font=\small] at (12,6) {\textbf{6}};
                \node [font=\small] at (12.8,4.25) {\textbf{24}};
                \node [font=\small] at (12.8,3) {\textbf{11}};
                \node [font=\small] at (12.1,1.5) {\textbf{19}};
                \node [font=\small] at (10.25,0.5) {\textbf{5}};
                \node [font=\small] at (9,0.5) {\textbf{20}};
                \node [font=\small] at (7.25,1.25) {\textbf{4}};
                \node [font=\small] at (6.4,3) {\textbf{15}};
                \node [font=\small] at (6.5,4.25) {\textbf{3}};
                \node [font=\small] at (7.75,3) {\textbf{8}};
                \node [font=\small] at (7.85,4.25) {\textbf{21}};
                \node [font=\small] at (8.25,5) {\textbf{2}};
                \node [font=\small] at (9,5.5) {\textbf{22}};
                \node [font=\small] at (10.25,5.5) {\textbf{1}};
                \node [font=\small] at (11,5) {\textbf{23}};
                \node [font=\small] at (11.5,4.25) {\textbf{0}};
                \node [font=\small] at (11.4,3) {\textbf{12}};
                \node [font=\small] at (8.25,2.25) {\textbf{14}};
                \node [font=\small] at (9,1.75) {\textbf{9}};
                \node [font=\small] at (10.25,1.75) {\textbf{13}};
                \node [font=\small] at (11,2.25) {\textbf{10}};
                \node [font=\small] at (7.25,6) {\textbf{16}};
                \node  at (8,1.5) {10};
                \node  at (7,3.25) {7};
                \node  at (7,4.5) {18};
                \node  at (8,5.75) {14};
                \node  at (9.25,6.25) {15};
                \node  at (10.5,6.25) {16};
                \node  at (11.25,5.75) {17};
                \node  at (12,4.5) {24};
                \node  at (12,3.25) {1};
                \node  at (11.6,1.9) {9};
                \node  at (10.5,1) {8};
                \node  at (9.25,1) {11};
                \node  at (9.7,5.5) {21};
                \node  at (10.5,5.25) {22};
                \node  at (11.25,4.75) {23};
                \node  at (11.5,3.5) {12};
                \node  at (11.25,2.5) {2};
                \node  at (10.75,2) {3};
                \node  at (9.7,1.75) {4};
                \node  at (8.5,2) {5};
                \node  at (8,2.5) {6};
                \node  at (7.75,3.5) {13};
                \node  at (8,4.75) {19};
                \node  at (8.5,5.25) {20};
            \end{scope}
        \end{tikzpicture}
    \end{center}
    \caption{$S_{24}$, with a near $\alpha$-valuation.}
\end{figure}
\end{example}

\begin{theorem}
    Let $n=8k$ for $k\geq 1$, and let $G$ be the sun graph on $m$ vertices. We label the vertices in the cycle $v_i$ and label the outer vertices $u_i$ for $0 \leq i \leq 4k-1$. Define:
    \[
    a(v_i)=
    \begin{cases}
        i/2 & \text{if } i \text{ even } 0 \leq i \leq 2k-2\\
        8k-(i+1)/2 & \text{if } i \text{ odd } 1 \leq i \leq 2k-1\\
        i/2 +2k-1 & \text{if } i \text{ even } 2k \leq i \leq 4k-2\\
        6k - (i+1)/2 & \text{if } i \text{ odd } 2k+1 \leq i \leq 4k-1\\
    \end{cases}
    \]
    \[
    a(u_i)=
    \begin{cases}
        8k & \text{if } i =0\\
        6k-i/2 & \text{if } i \text{ even } 2 \leq i \leq 2k-2\\
        2k+(i-1)/2 & \text{if } i \text{ odd } 1 \leq i \leq 2k-3\\
        k & \text{if } i =2k-1\\
        5k & \text{if } i =2k\\
        8k-i/2 & \text{if } i \text{ even } 2k+2 \leq i \leq 4k-2\\
        (i+1)/2 & \text{if } i \text{ odd } 2k+1 \leq i \leq 4k-3\\
        4k-1 & \text{if } i =4k-1\\
    \end{cases}
    \]
where, in the case-splits, only intervals $a \leq i \leq b$ such that $a < b$ for the given value of $k$ are considered.  Then $a$ is an $\alpha$-valuation of $G$.
\end{theorem}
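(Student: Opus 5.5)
The plan is to verify the three defining properties of an $\alpha$-valuation directly: injectivity into $\{0,\ldots,8k\}$, the threshold condition, and that the edge labels are exactly $\{1,\ldots,8k\}$. Note first that the statement's $n=8k$ is the number of vertices $m$, and also the number of edges, since $G$ has $4k$ cycle edges and $4k$ pendant edges. So the target label set is $\{0,\ldots,8k\}$ and the edge labels must be exactly $\{1,\ldots,8k\}$. I will mirror the case-by-case computations of Example \ref{ex:2C4k} and Example \ref{ex:2C4k+2}, organising everything by the parity blocks in the definition of $a$.

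First I will tabulate the image of each case of $a$ as an interval.
\begin{itemize}
\item Even $v_i$ give $[0,k-1]$ and $[3k-1,4k-2]$.
\item Odd $u_i$ give $[2k,3k-2]$, $\{k\}$, $[k+1,2k-1]$ and $\{4k-1\}$.
\item Odd $v_i$ give $[7k,8k-1]$ and $[4k,5k-1]$.
\item Even $u_i$ give $\{8k\}$, $[5k+1,6k-1]$, $\{5k\}$ and $[6k,7k-1]$.
\end{itemize}
These pieces are pairwise disjoint and together tile $\{0,\ldots,8k\}$ except for one missing value. This gives injectivity, and it shows that the first group of vertices is exactly $\{0,\ldots,4k-1\}$. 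Taking $x=4k-1$, I will check that every edge joins one vertex from each group. Cycle edges join $v_{\text{even}}$ to $v_{\text{odd}}$, and pendant edges join $v_{\text{even}}$ to $u_{\text{even}}$ or $v_{\text{odd}}$ to $u_{\text{odd}}$. The parity of $i$ decides the side, so the threshold condition of the definition holds with $x=4k-1$. Equivalently, by Theorem \ref{thm:nearprops}(ii), the bipartition is $V_{small}=\{v_{2j},u_{2j+1}\}$ and $V_{large}=\{v_{2j+1},u_{2j}\}$.

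Next I will compute the edge differences, always as large minus small, in blocks.
\begin{itemize}
\item Cycle edges $\{v_{i},v_{i+1}\}$ for $0\le i\le 2k-2$.
\item The crossover edges $\{v_{2k-1},v_{2k}\}$ and $\{v_{4k-1},v_0\}$.
\item Cycle edges for $2k\le i\le 4k-2$.
\item Pendant edges $\{v_i,u_i\}$, split by parity and by the same ranges as in the definition, including the special indices $i=0,2k-1,2k,4k-1$.
\end{itemize}
In each range the difference is an affine function of $i$ with slope $\pm1$, or $\pm 1/2$ paired across parities. So each block yields an arithmetic progression with common difference $1$ or $2$, exactly as in cases (i)--(xiv) of Example \ref{ex:2C4k}. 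For instance, $a(v_{i})-a(v_{i-1})$ for odd $i\le 2k-1$ equals $8k-i$, and $a(v_i)-a(v_{i+1})$ for odd $i$ gives $8k-i-1$. Together these cover $[6k+1,8k-1]$ in consecutive values. I expect the pendant blocks and the second-half cycle blocks to fill $[1,6k]$ similarly, with the special indices supplying the isolated values, such as $8k$ from $u_0$ to $v_0$ and the values adjacent to block endpoints.

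The main obstacle is the bookkeeping at the block boundaries. This means the special vertices $u_0,u_{2k-1},u_{2k},u_{4k-1}$ and the two crossover cycle edges, where off-by-one errors would create a repeated or missing label. The small cases $k=1,2$ also need care, since some ranges in the definition are empty, as the statement's caveat indicates. I will handle this by listing the isolated differences explicitly and checking they land exactly in the gaps left by the progressions. I will treat $k=1$ (and, if necessary, $k=2$) by direct computation, using Figure \ref{sg} ($k=3$) as a sanity check. Once the multiset of differences is shown to be $\{1,\ldots,8k\}$, $a$ is a $\beta$-valuation with threshold $x=4k-1$, hence an $\alpha$-valuation.
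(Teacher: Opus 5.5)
Your plan is essentially the paper's proof: the paper computes the cycle edge labels block by block as $[6k+1,8k-1]\cup[2,2k]\cup\{4k,4k+1\}$ and the pendant labels as $\{1,2k+1,6k,8k\}\cup[2k+2,4k-1]\cup[4k+2,6k-1]$, then takes $x=4k-1$ using the same parity bipartition, and your explicit injectivity check is a step the paper actually leaves out. One slip in that check: the even $u_i$ with $2k+2\le i\le 4k-2$ have labels $[6k+1,7k-1]$, not $[6k,7k-1]$, so the unused label is $6k$ (as you wrote it, your pieces would account for $8k+1$ labels on only $8k$ vertices).
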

\begin{proof}
To check the first condition, we consider the absolute values of the differences of the vertex labels. We consider the cycle and the other edges separately.  First we consider the cycle; we split this into the odd/even vertices smaller than $2k$, and those larger or equal to $2k$. Note that $a(v_{2k-1})-a(v_{2k})$ falls between these, so is considered separately.

\begin{align*}
&  \left( \bigcup^{4k-2}_{i=0}\{|a(v_i)-a(v_{i+1})|\} \right) \cup \{|a(v_{4k-1})-a(v_{0})|\}\\
&= \bigcup_{\substack{i \text{ even } \\ 0 \leq i \leq 2k-2}} \left\{\left|\frac{i}{2}-8k+\frac{i+2}{2}\right|\right\} \cup  \bigcup_{\substack{i \text{ odd } \\ 1 \leq i \leq 2k-3}}\left\{\left|8k-\frac{i+1}{2}-\frac{i+1}{2}\right|\right\} \\
&\cup \left\{\left|8k-\frac{2k-1+1}{2}-\frac{2k}{2}-2k+1\right|\right\} \cup \bigcup_{\substack{i \text{ even } \\ 2k \leq i \leq 4k-2}}\left\{\left|\frac{i}{2}+2k-1-6k+\frac{i+2}{2}\right|\right\}\\
&\cup \bigcup_{\substack{i \text{ odd } \\ 2k+1 \leq i \leq 4k-3}}\left\{\left|6k-\frac{i+1}{2}-\frac{i+1}{2}-2k+1\right|\right\} \cup \left\{\left|6k-\frac{(4k-1)+1}{2}-0\right|\right\}\\
&= \bigcup_{\substack{i \text{ even } \\ 0 \leq i \leq 2k-2}}\{8k-i-1\}  \cup \bigcup_{\substack{i \text{ odd } \\ 1 \leq i \leq 2k-3}}\{8k-i-1\} \cup\{4k+1\}\cup \bigcup_{\substack{i \text{ even } \\ 2k \leq i \leq 4k-2}}\{4k-i\}\\
&\cup \bigcup_{\substack{i \text{ odd } \\ 2k+1 \leq i \leq 4k-3}}\{4k-i\} \cup\{4k\}\\
&=[6k+1,8k-1]\cup\{4k+1\}\cup[2,2k]\cup\{4k\}
\end{align*}

Now we consider the other edges; again we split this into several ranges.
\begin{align*}
& \bigcup^{4k-1}_{i=0}\{|a(v_i)-a(u_i)|\}= \{|0-8k|\} \cup \bigcup_{\substack{i \text{ even } \\ 0 \leq i \leq 2k-2}}\left\{\left|\frac{i}{2}-6k+\frac{i}{2}\right|\right\} \cup \bigcup_{\substack{i \text{ odd } \\ 1 \leq i \leq 2k-3}}\left\{\left|8k-\frac{i+1}{2}-2k-\frac{i-1}{2}\right|\right\}\\
& \cup \left\{\left|8k-\frac{(2k-1)+1}{2}-k\right|\right\} \cup \left\{\left|\frac{2k}{2}+2k-1-5k\right|\right\} \cup \bigcup_{\substack{i \text{ even } \\ 2k+2 \leq i \leq 4k-2}}\left\{\left|\frac{i}{2}+2k-1-8k+\frac{i}{2}\right|\right\}\\
&\cup \bigcup_{\substack{i \text{ odd } \\ 2k+1 \leq i \leq 4k-3}}\left\{\left|6k-\frac{i+1}{2}-\frac{i+1}{2}\right|\right\} \cup \left\{\left|6k-\frac{(4k-1)+1}{2}-4k-1\right|\right\}\\
&=\{8k\}\cup \bigcup_{\substack{i \text{ even } \\ 0 \leq i \leq 2k-2}}\{6k-i\} \cup \bigcup_{\substack{i \text{ odd } \\ 1 \leq i \leq 2k-3}}\{6k-i\} \cup\{6k\}\cup\{2k+1\}\cup \bigcup_{\substack{i \text{ even } \\ 2k+2 \leq i \leq 4k-2}}\{6k-i+1\} \\& \cup \bigcup_{\substack{i \text{ odd } \\ 2k+1 \leq i \leq 4k-3}}\{6k-i-1\}\cup\{1\}\\
& =\{8k\}\cup[4k+2,6k-1]\cup\{6k,2k+1\}\cup[2k+2,4k-1]\cup\{1\}
\end{align*}

Combining both of these gives $[1,8k]=\mathbb{Z}_{8k+1}\backslash\{0\}$.  

For the second condition: in the cycle, note that the labels of $v_i$ ($i$ even) are at most $4k-2$, while those of $v_i$ ($i$ odd) are at least $4k$. For the pendant vertices, the labels of the $u_i$ ($i$ even) are at least $5k$ while the labels of the $u_i$ ($i$ odd) are at most $4k-1$. So this is an $\alpha$-valuation with $x=4k-1$.
\end{proof}

\end{document}